\newlength\figureheight 
\newlength\figurewidth 
\pgfplotsset{compat=newest}
\pgfplotsset{plot coordinates/math parser=false}
\newtheoremstyle{specialcasestyle}{1mm}{1mm}{\upshape}{}{\bfseries\upshape}{.}{0mm}{}
\theoremstyle{specialcasestyle}
\newtheorem{assump}{Assumption}
\newtheorem{lem}{Lemma}
\newtheorem{corr}{Corollary}
\newtheorem{rem}{Remark}
\newtheorem{thm}{Theorem}
\theoremstyle{remark}          % Avoid cursive text in example environment
\newcommand*{\Var}[1]{\ensuremath{\mathrm{Var}\left[#1\right]}}
\newcommand*{\E}[1]{\ensuremath{\mathbb{E}\left[#1\right]}}
\newcommand*{\prob}[1]{\ensuremath{\mathbb{P}\left[#1\right]}}
\newcommand*{\tol}{\ensuremath{\mathrm{TOL}}}
\begin{document} 

\title{Double-Loop Importance Sampling for McKean--Vlasov Stochastic Differential Equation}
\author{Nadhir Ben Rached  \thanks{Department of Statistics, School of Mathematics, University of Leeds, UK ({\tt N.BenRached@leeds.ac.uk}).}, Abdul-Lateef Haji-Ali \thanks{Department of Actuarial Mathematics and Statistics, School of Mathematical and Computer Sciences, Heriot-Watt University, Edinburgh, UK ({\tt A.HajiAli@hw.ac.uk}).}, Shyam Mohan Subbiah Pillai\thanks{Corresponding author; Chair of Mathematics for Uncertainty Quantification, Department of Mathematics, RWTH Aachen University, Aachen, Germany({\tt subbiah@uq.rwth-aachen.de}).} \\
and Ra\'ul Tempone  \thanks{Computer, Electrical and Mathematical Sciences \& Engineering Division (CEMSE), King Abdullah University of Science and Technology (KAUST), Thuwal, Saudi Arabia ({\tt raul.tempone@kaust.edu.sa}). Alexander von Humboldt Professor in Mathematics for Uncertainty Quantification, RWTH Aachen University, Aachen, Germany ({\tt tempone@uq.rwth-aachen.de}).}.
        \thanks{This work was supported by the KAUST Office of Sponsored Research (OSR) under Award No. URF/1/2584-01-01 and the Alexander von Humboldt Foundation. This work was also partially performed as part of the Helmholtz School for Data Science in Life, Earth and Energy (HDS-LEE) and received funding from the Helmholtz Association of German Research Centres. For the purpose of open access, the author has applied a Creative Commons Attribution (CC BY) license to any Author Accepted Manuscript version arising from this submission.}
}
\date{}
\maketitle
\thispagestyle{empty}
\begin{abstract}
This paper investigates Monte Carlo (MC) methods to estimate probabilities of rare events associated with solutions to the $d$-dimensional McKean-–Vlasov stochastic differential equation (MV-SDE). MV-SDEs are usually approximated using a stochastic interacting $P$-particle system, which is a set of $P$ coupled $d$-dimensional stochastic differential equations (SDEs). Importance sampling (IS) is a common technique for reducing high relative variance of MC  estimators of rare-event probabilities. We first derive a zero-variance IS change of measure for the quantity of interest by using stochastic optimal control theory. However, when this change of measure is applied to stochastic particle systems, it yields a $P \times d$-dimensional partial differential control equation (PDE), which is computationally expensive to solve. To address this issue, we use the decoupling approach introduced in \citep{is_mvsde}, generating a $d$-dimensional control PDE for a zero-variance estimator of the decoupled SDE. Based on this approach, we develop a computationally efficient double loop MC (DLMC) estimator. We conduct a comprehensive numerical error and work analysis of the DLMC estimator. As a result, we show  optimal complexity of $\order{\tol_{\mathrm{r}}^{-4}}$ with a significantly reduced constant to achieve a prescribed relative error tolerance $\tol_{\mathrm{r}}$. Subsequently, we propose an adaptive DLMC method combined with IS to numerically estimate rare-event probabilities, substantially reducing relative variance and computational runtimes required to achieve a given  $\tol_{\mathrm{r}}$ compared with standard MC estimators in the absence of IS. Numerical experiments are performed on the Kuramoto model from statistical physics.
\end{abstract}
\textbf{Keywords:} McKean--Vlasov stochastic differential equation, importance sampling, rare events, stochastic optimal control, decoupling approach, double loop Monte Carlo. \\
\textbf{2010 Mathematics Subject Classification} 60H35. 65C30. 65C05. 93E20. 65C35.

\section{Introduction}
\label{sec:intro}

This paper investigates Monte Carlo (MC) methods to estimate rare-event probabilities associated with solutions to the McKean-–Vlasov stochastic differential equation (MV-SDE). We develop a computationally efficient MC method to estimate $\E{G(X(T))}$, where $G:\mathbb{R}^d \rightarrow \mathbb{R}$ is a given observable and $\{X(t) \in \mathbb{R}^d: t \in [0,T]\}$ is the solution to the MV-SDE up to a finite terminal time $T$. MV-SDEs are a special class of stochastic differential equations (SDEs), the drift and diffusion coefficients of which depend on the law of the solution itself \citep{mckean_vlasov}. Such SDEs result from the mean-field behavior of stochastic interacting particle systems, commonly used to model various phenomena in pedestrian dynamics~ \citep{pedestrian_app}, collective animal behavior~\citep{animal_app}, oscillator systems~\citep{chemical_app,combustion_app}, biological interactions~\citep{biology_app}, and financial mathematics \citep{finance_app}. Several works have studied the existence and uniqueness of solutions to MV-SDEs ~\citep{mvsde_existence}, well-posedness of the associated Kolmogorov forward and backward partial differential equations (PDEs)~\citep{mvsde_pde,mvsde_smoothpde}, and efficient numerical methods to simulate MV-SDEs for certain classes of drift/diffusion coefficients~\citep{mlmc_mvsde,mvsde_timescheme,mvsde_iterative,mvsde_cubature}.

Owing to the dependence of drift/diffusion on the law of the solution, the MV-SDE is often approximated using a stochastic P-particle system, i.e., a set of $P$ coupled $d$-dimensional It\^o SDEs. Under certain conditions, the stochastic particle system approaches the mean-field limit as the number of particles tends to infinity~\citep{mean_field_limit}, commonly referred to as the propagation of chaos. Time evolution of the particle system's joint probability density is given by the $P \times d$-dimensional Fokker-–Planck PDE, the numerical estimation of which is infeasible. Hence, we propose the use of MC methods by simulating approximate MV-SDE sample paths by using  Euler-–Maruyama time-discretized stochastic particle system for bounded, Lipschitz continuous drift/diffusion coefficients~\citep{euler_mvsde}. Previous studies have investigated MC methods using this numerical scheme for smooth, nonrare observables~\citep{ogawa1992monte,mlmc_mvsde}, and they were able to achieve $\order{\tol^{-4}}$ computational complexity for a prescribed error tolerance, $\tol$. However, the use of naïve MC methods is considerably expensive for rare events, owing to the blowing up of the constant associated with the estimator's computational complexity with the increase in the rarity of the event~\citep{is_general_ref}.

To address the problem of estimating rare-event probabilities, the importance sampling (IS) technique~\citep{is_general_ref} is widely used in the context of SDEs. Recent studies have developed IS schemes using stochastic optimal control theory in various contexts, including stochastic reaction networks~\citep{soc_srn}, and sums of independent random variables in communication systems~\citep{soc_sumrvs}. In particular, several studies formulated the connection between stochastic optimal control and IS for standard SDEs~\citep{is_entropy,is_diffusions,is_cross_entropy}. The derived optimal change of measure is often related to the solution of a Hamilton-Jacobi-Bellman (HJB) equation \citep{hjb_theory}. By employing model-reduction techniques~\citep{is_projection,is_model_reduction} and neural network approaches~\citep{hjb_derivation}, the curse of dimensionality, which is commonly encountered in conventional numerical schemes for solving PDEs, is partially alleviated.

The dependence of drift/diffusion coefficients on the law of the solution  complicates the formulation of the corresponding HJB control PDE for MV-SDEs. This is because the PDE for stochastic particle systems would be $P \times d$-dimensional, and hence infeasible to solve numerically. We overcome this  issue by utilizing a decoupling approach introduced by \citep{is_mvsde}. They defined a decoupled MV-SDE, in which the drift/diffusion coefficients depended on an empirical law that was computed beforehand using a stochastic particle system. This enables the decoupling of the law estimation from the  change of measure for the decoupled MV-SDE. In addition, \citep{is_mvsde} employed large deviations and the Pontryagin principle to obtain a deterministic, time-dependent control that minimizes a proxy for the variance. In the current study, we propose a stochastic optimal control formulation to derive a time- and pathwise-dependent control that produces a zero-variance estimator for the decoupled MV-SDE. This optimal control is obtained by solving a $d$-dimensional control PDE. We list the contributions of this study below.

\begin{itemize} 
\item Stochastic optimal control theory is applied to the decoupled MV-SDE to derive a time- and pathwise-dependent IS control resulting in a zero-variance MC estimator, provided that the sign of the observable does not change. We numerically approximate the solution to a low-dimensional control PDE by using conventional finite difference schemes, obtaining a control that considerably reduces the variance. 
\item A double-loop MC (DLMC) estimator is introduced with IS; this is based on the decoupling approach for MV-SDEs. We first estimate the MV-SDE law by using a stochastic particle system and then define the decoupled MV-SDE conditioned on the empirical law.
\item 	Finally, we provide a detailed analysis of the numerical bias and statistical errors for the DLMC estimator and derive its optimal computational complexity. By combining the proposed DLMC estimator with the IS scheme, an optimal complexity of $\order{\tol_{\mathrm{r}}^{-4}}$ is proven, for a prescribed relative error tolerance $\tol_{\mathrm{r}}$. Additionally, the corresponding constant is substantially reduced, enabling the feasible estimation of rare-event probabilities in the context of MV-SDEs.
\end{itemize}

The remainder of this paper is structured as follows. Section~\ref{sec:prelim} introduces the MV-SDE, associated notation, and emphasizes the need for MC methods to estimate expectations associated with the solution of MV-SDE. We also review the optimal change of measure for IS using stochastic optimal control theory for standard SDEs. Section~\ref{sec:setting} introduces the specific problem and presents discussions of the challenges related to the implementation of IS for MV-SDEs. Section~\ref{sec:dlmc} introduces the decoupling approach, and an optimal control for the decoupled MV-SDE is derived. We also define the DLMC estimator; provide a detailed error and work analysis, and formulate an optimal complexity theorem and an adaptive algorithm to choose optimal parameters for the feasible estimation of rare-event probabilities. Section~\ref{sec:results} applies the proposed approach to the Kuramoto model from statistical physics and provides numerical evidence for the theoretical results.

%In Section \ref{sec:conclusion}, we summarize our findings and hint towards possible extensions to this work. Finally in Section \ref{sec:appendix}, we provide proofs for all relevant theorems in this paper.

\section{Preliminaries}
\label{sec:prelim}

\subsection{The McKean--Vlasov stochastic differential equation}
\label{sec:mvsde}

Consider the probability space $\{\Omega,\mathcal{F},\{\mathcal{F}_t\}_{t \geq 0},P\}$, where $\mathcal{F}_t$ is the filtration of a standard Wiener process. For functions $b:\mathbb{R}^d \cross \mathbb{R} \longrightarrow \mathbb{R}^d$, $\sigma:\mathbb{R}^d \cross \mathbb{R} \longrightarrow \mathbb{R}^{d \cross d}$, $\kappa_1: \mathbb{R}^d \cross \mathbb{R}^d \longrightarrow \mathbb{R}$, and $\kappa_2: \mathbb{R}^d \cross \mathbb{R}^d \longrightarrow \mathbb{R}$, we consider the following It\^o SDE for the stochastic process, $X: [0,T] \times \Omega \rightarrow \mathbb{R}^d$.

\begin{empheq}[left=\empheqlbrace, right =,]{equation} %%[R8]
    \label{eqn:mvsde}
    \begin{alignedat}{2}
    \dd X(t) &= b\left(X(t),\int_{\mathbb{R}^d} \kappa_1 (X(t),x) \mu_t(\dd x)\right) \dd t  \\
    &\qquad + \sigma \left(X(t),\int_{\mathbb{R}^d} \kappa_2 (X(t),x) \mu_t(\dd x)\right) \dd W(t), \quad t>0 \\
    X(0) &= x_0 \sim \mu_0 \in \mathcal{P}(\mathbb{R}^d) , 
    \end{alignedat}
\end{empheq}
where $W:[0,T] \cross \Omega \longrightarrow \mathbb{R}^d$ is a standard $d$-dimensional Wiener process with mutually independent components; $\mu_t \in \mathcal{P}(\mathbb{R}^d)$ is the law of $X(t)$, where $\mathcal{P}(\mathbb{R}^d)$ is the space of probability measures on $\mathbb{R}^d$; and  $x_0 \in \mathbb{R}^d$ is a random initial state with distribution $\mu_0 \in \mathcal{P}(\mathbb{R}^d)$. 

Functions $b(\cdot)$ and $\sigma(\cdot)$ are referred to as the drift and diffusion functions/coefficients, respectively. Existence and uniqueness of solutions to \eqref{eqn:mvsde} can be proved under certain regularity and boundedness of $b,\sigma,\kappa_1,$, and $\kappa_2$~\citep{mvsde_existence,mvsde_soln_theory,mvsde_weak_soln,mean_field_limit}. The time-evolution of the deterministic mean-field law, $\mu_t$, is given by the following Fokker--Planck PDE:
%The one-dimensional ($d=1$) version of the theorem is presented below.

%\begin{prop}[Existence and Uniqueness \citep{theory_mvsde}]
 %   Assume there exists a constant $C > 0$, such that for all $x,y,z,w \in \mathbb{R}$,
 %   \begin{align}
  %      \label{eqn:assum1}
   %     \begin{split}
    %        \abs{b(x,y)-b(w,z)}&+\abs{\sigma(x,y)-\sigma(w,z)}+\abs{\kappa_1(x,y)-\kappa_1(w,z)} \\
     %       &+\abs{\kappa_2(x,y)-\kappa_2(w,z)} \leq C(\abs{x-w}+\abs{y-z})
      %  \end{split}    
    %\end{align}
%    and
 %   \begin{equation}
  %      \label{eqn:assum2}
   %     \abs{b(x,y)}+\abs{\sigma(x,y)}+\abs{\kappa_1(x,y)}+\abs{\kappa_2(x,y)}\leq C(1+\abs{x}+\abs{y}) \cdot
    %\end{equation}
    %Assume further that $\mathbb{E}[x_0^4]$ is bounded. Then a strong solution to the MV-SDE \eqref{eqn:mvsde} exists and is unique.
%\end{prop}

\begin{empheq}[left=\empheqlbrace, right = ,]{alignat=2}
    \label{eqn:fokkerplanck_mvsde}
        &-\frac{\partial \mu(s,x;t,y)}{\partial s} - \sum_{i=1}^d \frac{\partial}{\partial x_i} \left(b_i\left(x,\int_{\mathbb{R}^d} \kappa_1(x,z) \mu(s,z;t,y) \dd z \right) \mu(s,x;t,y)\right) \nonumber \\
        &+ \sum_{i=1}^d \sum_{j=1}^d \frac{1}{2}\frac{\partial^2}{\partial x_i \partial x_j} \Bigg( \Bigg. \sum_{k=1}^d \sigma_{ik} \sigma_{jk} \left(x,\int_{\mathbb{R}^d} \kappa_2(x,z) \mu(s,z;t,y) \dd z \right) \nonumber \\
        &\qquad \mu(s,x;t,y) \Bigg. \Bigg) = 0, \quad (s,x) \in (t,\infty) \cross \mathbb{R}^d  \\
        &\mu(t,x;t,y) = \delta_y(x) ,  \nonumber
\end{empheq}
where $\mu(s,x;t,y)$ is the conditional distribution of $X(s)$, given that $X(t) = y$, and $\delta_y(\cdot)$ is the Dirac measure at point $y$. Equation  \eqref{eqn:fokkerplanck_mvsde} represents a nonlinear PDE due to the dependency of $b$ and $\sigma$ on $\mu(s,x;t,y)$. This is also an integral differential equation with nonlocal terms because the drift and diffusion functions depend on an integral over $\mathbb{R}^d$ with respect to $\mu(s,x;t,y)$. The numerical solution of such a nonlinear integral differential equation up to high tolerances can be cumbersome, particularly in higher dimensions ($d \gg 1$). 

A strong approximation to the solution of the MV-SDE \eqref{eqn:mvsde} is obtained by solving a system of $P$ exchangeable It\^o SDEs, also known as an stochastic interacting particle system with pairwise interaction kernels~\citep{mean_field_limit}. For $p=1, \ldots, P$, process $X^P_p:[0,T] \cross \Omega \rightarrow \mathbb{R}^d$ solves the following SDE:
\begin{empheq}[left=\empheqlbrace, right = ,]{alignat=2}
    \label{eqn:strong_approx_mvsde}
        \dd X^P_p(t) &= b\left(X^P_p(t), \frac{1}{P} \sum_{j=1}^P  \kappa_1(X^P_p(t),X^P_j(t)) \right) \dd t \nonumber \\
        &\qquad + \sigma\left(X^P_p(t), \frac{1}{P} \sum_{j=1}^P \kappa_2(X^P_p(t),X^P_j(t)) \right) \dd W_p(t), \quad t>0 \\
        X^P_p(0) &= (x_0)_p \sim \mu_0 \in \mathcal{P}(\mathbb{R}^d) \nonumber
\end{empheq}
where $\{(x_0)_p\}_{p=1}^P$ are independent and identically distributed (iid) random variables sampled from the initial distribution, $\mu_0$, and $\{W_p:[0,T] \cross \Omega \rightarrow \mathbb{R}^d\}_{p=1}^P$ represents mutually independent $d$-dimensional Wiener processes, which are also independent of $\{(x_0)_p\}_{p=1}^P$. Equation \eqref{eqn:strong_approx_mvsde} approximates the mean-field law, $\mu_t$, in \eqref{eqn:mvsde} by an empirical law based on particles $\{X^P_p\}_{p=1}^P$.
\begin{equation}
    \label{eqn:emp_dist_law}
    \mu_t(\dd x) \approx \mu_t^P(\dd x) = \frac{1}{P} \sum_{j=1}^P \delta_{X^P_j(t)} (\dd x) , 
\end{equation}
where the particles $\{X^P_p\}_{p=1}^P$ are identically distributed, but not mutually independent due to the interaction kernels in the drift and diffusion coefficients. 

Strong convergence of the particle system has been proven for a broad class of drift and diffusion coefficients~\citep{mvsde_strong_conv_1,mvsde_strong_conv_2,mvsde_strong_conv_3}. Given the stochastic interacting particle system \eqref{eqn:strong_approx_mvsde}, we can derive the PDE governing the evolution of the joint probability-density function of $\{X^P_p\}_{p=1}^P$ by using the multidimensional Fokker--Planck equation. For convenience, we write this only for $d=1$. Let $\Tilde{\mu}(t,\mathbf{x})$ be the joint probability-density function, where $\mathbf{x} = [x_1,\ldots,x_P] \in \mathbb{R}^P$. Then, 

%The one-dimensional version of the strong convergence theorem is stated below, without proof. 

%\begin{prop}[Strong Convergence \citep{theory_mvsde}]
 %   Let $\{X^P_p(t)\}_{p=1}^P$ be the solution to the stochastic particle system \eqref{eqn:strong_approx_mvsde}. Let $\{X_p(t)\}_{p=1}^P$ be $P$ independent samples of the solution to the MV-SDE \eqref{eqn:mvsde} driven by the same Wiener process $\{W_p(t)\}_{t \geq 0}$ in \eqref{eqn:strong_approx_mvsde}. Then, under assumptions \eqref{eqn:assum1} and \eqref{eqn:assum2}, for any $p=1,\ldots,P$ and any integer $n \geq 1$, we have
%    \begin{equation}
 %   \label{eqn:strong_conv_mvsde}
  %      \sup_{0 \leq t \leq T} \mathbb{E}\left[\abs{X_p(t) - X_p^P(t)}^{2n}\right] \leq \beta P^{-n} \left(1 + \mathbb{E}\left[\abs{x_0}^{2n}\right]\right) \cdot
   % \end{equation}
    %Here the constant $\beta>0$ depends only on $n,T,C$.
%\end{prop}

%For example, the rate of strong convergence in the $L^2$-sense ($n=1$) is $\order{P^{-0.5}}$. 

\begin{empheq}[left=\empheqlbrace, right = ,]{alignat=2}
    \label{eqn:multid_fp_mvsde}
        &-\frac{\partial \Tilde{\mu}(s,\mathbf{x};t,\mathbf{y})}{\partial s} - \sum_{i=1}^P \frac{\partial}{\partial x_i} \left(b\left(x_i,\frac{1}{P} \sum_{j=1}^P  \kappa_1(x_i,x_j)  \right) \Tilde{\mu}(s,\mathbf{x};t,\mathbf{y})\right) \nonumber \\ 
        &+ \sum_{i=1}^P \frac{1}{2}\frac{\partial^2}{\partial x_i^2 } \Bigg( \Bigg. \sigma^2 \left(x_i, \frac{1}{P} \sum_{j=1}^P \kappa_2(x_i,x_j)\right) 
         \Tilde{\mu}(s,\mathbf{x};t,\mathbf{y}) \Bigg.\Bigg) = 0, \quad (s,\mathbf{x}) \in (t,\infty)\cross \mathbb{R}^P  \\
        &\Tilde{\mu}(t,\mathbf{x};t,\mathbf{y}) = \delta_{\mathbf{y}}(\mathbf{x})  \nonumber
\end{empheq}
where $\Tilde{\mu}(s,\mathbf{x};t,\mathbf{y})$ is the distribution of $\mathbf{X}^P(s) = [X^P_1(s),\ldots,X^P_P(s)]$, given that $\mathbf{X}^P(t) = \mathbf{y}$. 

Equation~\eqref{eqn:multid_fp_mvsde} is linear in $\Tilde{\mu}(t,\mathbf{x})$ and does not have integral dependence, as in~\eqref{eqn:fokkerplanck_mvsde}; however it is $P$-dimensional in the case of $d=1$ and $P \times d$-dimensional in general. Thus, the use of conventional numerical methods to solve \eqref{eqn:multid_fp_mvsde} is infeasible for low error tolerances. This motivates the use of MC methods, which do not suffer from the curse of dimensionality.

\subsubsection{Example: Fully Connected Kuramoto Model for Synchronized Oscillators}
\label{sec:kuramoto}

Herein, we test our methodology on a simple, one-dimensional MV-SDE \eqref{eqn:mvsde} referred to as the Kuramoto model, which is commonly used to describe synchronisation in statistical physics, modeling the behavior of large sets of coupled oscillators as systems of $P$-fully connected, synchronized oscillators. The Kuramoto model is widely applied in several domains, including chemical and biological systems~\citep{chemical_app}, neuroscience~\citep{neuroscience_app}, and oscillating flame dynamics~\citep{combustion_app}. Consider a system of $P$-oscillators with state $\{X^P_p\}_{p=1}^P$. Then, $X_p^P:[0,T] \cross \Omega \rightarrow \mathbb{R}$ satisfies the It\^o SDE:
\begin{empheq}[left=\empheqlbrace, right = ,]{alignat=2}
    \label{eqn:kuramoto_model}
        \dd X^P_p(t) &= \left(\nu_p + \frac{1}{P} \sum_{q=1}^P \sin\left(X^P_p(t) - X^P_q(t)\right)\right) \dd t + \sigma \dd W_p (t) , \quad t>0\\
        X^P_p(0) &= (x_0)_p \sim \mu_0 \in \mathcal{P}(\mathbb{R}) , \nonumber
\end{empheq}
where $\{\nu_p\}_{p=1}^P$ are time-independent iid random variables sampled from a prescribed distribution; diffusion $\sigma \in \mathbb{R}$ is constant; $\{(x_0)_p\}_{p=1}^P$ are iid random variables sampled from a prescribed distribution $\mu_0$; $\{W_p:[0,T] \cross \Omega \rightarrow \mathbb{R}\}_{p=1}^P$ are mutually independent one-dimensional Wiener processes; and $\{\nu_p\}_{p=1}^P, \{(x_0)_p\}_{p=1}^P$, and $\{W_p\}_{p=1}^P$ are independent. This coupled particle system reaches the mean-field limit as the number of oscillators tends to infinity. In this limit, each particle behaves according to the following MV-SDE:
\begin{empheq}[left=\empheqlbrace, right = ,]{equation}
    \label{eqn:kuramoto_mvsde}
    \begin{alignedat}{2}
    \dd X(t) &= \left(\nu + \int_{\mathbb{R}} \sin (X(t)-x) \mu_t (\dd x) \right) \dd t + \sigma \dd W(t), \quad t>0 \\
    X(0) &= x_0 \sim \mu_0 \in \mathcal{P}(\mathbb{R})
    \end{alignedat}
\end{empheq}
where $X(t)$ denotes the state of each particle at time $t$, $\nu$ is a random variable sampled from some prescribed distribution, and  $\mu_t$ is the mean-field law of $X(t)$.
%In the context of the Kuramoto model, a common quantity of interest is called total synchronisation and is given by,

%\begin{equation}
 %   \label{eqn:kuramoto_sync}
  %  \text{Total Synchronisation } = \left(\mathbb{E}[\cos(X(T))]\right)^2 + \left(\mathbb{E}[\sin(X(T))]\right)^2 \cdot
%\end{equation}

\subsection{IS using Stochastic Optimal Control for SDEs}
\label{sec:is_sde}

First, we develop a framework for an IS scheme that minimizes estimator variance through the stochastic optimal control of standard SDEs. \citep{is_entropy} derived the same optimal control problem based on the variational characterization of thermodynamic free energy, whereas we achieve the same results by posing the optimal IS problem as a stochastic optimal control problem, resulting in a time- and pathwise-dependent control that minimizes the IS estimator variance. Let $Y:[0,T] \times \Omega \rightarrow \mathbb{R}^d$ be the solution to the following standard It\^o SDE:
%Our approach leads to the same result as in \citep{is_entropy}. 

%Consider the probability space $\{\Omega,\mathcal{F},\{\mathcal{F}_t\}_{t \geq 0},P\}$, where $\mathcal{F}_t$ is the filtration of a standard Wiener path $W$. 
\begin{empheq}[left=\empheqlbrace, right =,]{alignat=2}
    \label{eqn:sde_defn}
        \dd Y(t) &= b(t,Y(t)) \dd t + \sigma(t,Y(t)) \dd W(t), \quad t \in (0,T] \\
        Y(0) &= y_0, \quad y_0 \in \mathbb{R}^d , \nonumber
\end{empheq}
%Here, the functions $b:[0,T] \cross \mathbb{R}^d \rightarrow \mathbb{R}^d$ and $\sigma:[0,T] \cross \mathbb{R}^d \rightarrow \mathbb{R}^{d \cross d}$ are called the drift and diffusion coefficients respectively. 
where $W:[0,T] \cross \Omega \rightarrow \mathbb{R}^d$ is a standard $d$-dimensional Wiener process in probability space $\{\Omega,\mathcal{F},P\}$ with mutually independent components. We aim at estimating $\E{G(Y(T))}$ by using MC for some scalar observable $G: \mathbb{R}^d \longrightarrow \mathbb{R}$, and we apply a change of measure to \eqref{eqn:sde_defn} such that the MC estimator variance is minimized. 

Let us first perform a time-discrete change of measure on the Euler--Maruyama discretization of \eqref{eqn:sde_defn}. We extend this later to the time-continuous setting, obtaining a time-continuous control problem. Consider the discretization $0=t_0<t_1<\ldots<t_N = T$ of the  time domain $[0,T]$ with $N$ uniform time steps; it follows that $t_n = n \times \Delta t, \quad n=0,1,\ldots,N$, and $\Delta t = T/N$. Next, let $Y^N$ be the time-discretized version of process $Y$. Then, the Euler--Maruyama time discretization for the SDE can be expressed as follows:
\begin{empheq}[left=\empheqlbrace, right = \cdot]{alignat=2}
    \label{eqn:sde_euler_re}
    \begin{split}
        &Y^N(t_{n+1}) = Y^N(t_n) + b\left(t_n,Y^N(t_n)\right) \Delta t \\
        & \qquad + \sigma \left(t_n,Y^N(t_n)\right) \sqrt{\Delta t} \epsilon_n,\quad \forall n =0,\ldots,N-1 \\
        &Y^N(t_0) = Y(0) = y_0 , 
    \end{split}
\end{empheq}
where $\epsilon_n \sim \mathcal{N}(0,\mathbbm{I}_d)$ for $n=1,\ldots,N$ are iid random variables; and $\mathbbm{I}_d$ is the $d$-dimensional identity matrix. For a given time step, $n \in \{0,\ldots,N-1\}$, we perform a mean-shift measure change 
\begin{equation}
\label{eqn:is_sde_shift_measure}
    \hat{\epsilon}_n = \sqrt{\Delta t} \zeta_n + \epsilon_n ,
\end{equation}
where $\zeta_n = \zeta(t_n,Y^N(t_n)) \in \mathbb{R}^d$ is a $d$-dimensional control. The resulting likelihood factor $L_n$ is the ratio of two Gaussians and can be written at time step $n$ as 
\begin{align}
\label{eqn:is_sde_discrete_llhood}
    L_n(\hat{\epsilon}_n) &= \exp{-\frac{1}{2} \norm{\hat{\epsilon}_n}^2}\exp{\frac{1}{2}\norm{\hat{\epsilon}_n - \sqrt{\Delta t}\zeta_n)}^2} \nonumber \\
    &= \exp{\frac{1}{2} \Delta t \norm{\zeta_n}^2 - \sqrt{\Delta t} \langle \hat{\epsilon}_n,\zeta_n\rangle } , 
\end{align}
where $\langle\cdot,\cdot\rangle$ is the Euclidean dot product between two vectors in $\mathbb{R}^d$; and $\norm{\cdot}$ is the Euclidean norm for a vector in $\mathbb{R}^d$. Substituting \eqref{eqn:is_sde_shift_measure} in \eqref{eqn:is_sde_discrete_llhood} and setting $L_n$ as a function of $\epsilon_n$, we get
\begin{equation}
\label{eqn:is_sde_discrete_llhood_2}
    L_n(\epsilon_n) = \exp{-\frac{1}{2} \Delta t \norm{\zeta_n}^2 -  \sqrt{\Delta t} \langle\epsilon_n,\zeta_n\rangle} \cdot
\end{equation}

Thus, the likelihood over $N$ time steps is written as $\prod_{n=0}^{N-1} L_n(\epsilon_n)$; hence, the quantity of interest can be expressed as 
\begin{equation}
    \label{eqn:sde_mcis_est}
    \E{G(Y^N(T))} = \E{G(Y^N_\zeta(T)) \prod_{n=0}^{N-1} L_n(\epsilon_n)} ,
\end{equation}
where $Y^N_\zeta$ is subject to 
\begin{empheq}[left=\empheqlbrace, right = \cdot]{alignat=2}
    \label{eqn:sde_euler_is}
    \begin{split}
        &Y^N_\zeta(t_{n+1}) = Y^N_\zeta(t_n) + \Bigg( \Bigg. b\left(t_n,Y^N_\zeta(t_n)\right) 
        + \sigma \left(t_n,Y^N_\zeta(t_n)\right) \zeta(t_n,Y^N_\zeta(t_n)) \Bigg. \Bigg) \Delta t \\
        &\qquad + \sigma \left(t_n,Y^N_\zeta(t_n)\right) \sqrt{\Delta t} \epsilon_n,\quad \forall n =0,\ldots,N-1 \\
        &Y^N_\zeta(t_0) = Y^N(0) = y_0  . 
    \end{split}
\end{empheq}

%\eqref{eqn:sde_euler_is} is derived by inserting the shifted measure \eqref{eqn:is_sde_shift_measure} into the original dynamics \eqref{eqn:sde_euler_re}. 
We aim to minimize the MC estimator variance. Since \eqref{eqn:sde_mcis_est} is an unbiased estimator, it is sufficient to minimize the estimator’s second moment:
\begin{align}
\label{eqn:sde_discrete_optim}
    &\min_{\{\zeta_n\}_{n=1}^N} \E{G^2(Y^N_\zeta(T)) \prod_{n=0}^{N-1} L^2_n(\epsilon_n) \quad \Bigg| \quad
     \mid Y^N_\zeta(0) = y_0} \nonumber \\
    = &\min_{\{\zeta_n\}_{n=1}^N} \E{G^2(Y^N_\zeta(T)) \prod_{n=0}^{N-1} \exp{-\Delta t \norm{\zeta_n}^2 - 2 \langle\epsilon_n,\zeta_n\rangle \sqrt{\Delta t}} \quad \Bigg| \quad Y^N_\zeta(0) = y_0} \cdot
\end{align}

By considering the limit as $\Delta t \rightarrow 0$, we define the cost function 
\begin{equation}
\label{eqn:sde_cost_fxn}
    C_{t,x}(\zeta) = \mathbb{E}\left[G^2(Y_\zeta(T)) \exp{-\int_t^T \norm{\zeta(s,Y_\zeta(s))}^2 \dd s - 2 \int_t^T \langle\zeta(s,Y_\zeta(s)),\dd W(s)\rangle} \quad \Bigg| \quad Y_\zeta(t)=x\right],
\end{equation}
where $Y_\zeta:[0,T] \cross \Omega \rightarrow \mathbb{R}^d$ is subject to 
\begin{empheq}[left=\empheqlbrace, right = \cdot]{alignat=2}
    \label{eqn:sde_sde_is}
    \begin{split}
        \dd Y_\zeta(t) &= \Bigg( \Bigg. b\left(t,Y_\zeta(t) \right) 
        + \sigma \left(t,Y_\zeta(t) \right) \zeta(t,Y_\zeta(t)) \Bigg. \Bigg) \dd t 
        + \sigma \left(t,Y_\zeta(t) \right) \dd W(t), \quad 0<t<T \\ 
        Y_\zeta(0) &= Y(0) = y_0  
    \end{split}
\end{empheq}

Here, $\zeta: [0,T] \cross \mathbb{R}^d \rightarrow \mathbb{R}^d$ is referred to as the IS control. The additional term in the drift ensures that sample paths for $Y_\zeta(t)$ are shifted toward the regime of interest, where the path change is controlled by $\zeta(\cdot,\cdot)$. Hence, $\zeta$ is a control or IS parameter that defines the change of measure, and $Y_\zeta(t) = Y(t)$ only when $\zeta(t,y) = 0, \quad \forall (t,y) \in [0,T] \cross \mathbb{R}^d$. This defined measure change corresponds precisely to the Girsanov theorem for change of measure in SDEs~\citep{sde_oksendal}. \citep{Melnikov2023} also derived the Girsanov theorem via a discrete time formulation, as described above.

Next, we define the value function, which minimizes the second moment of the MC estimator
\begin{equation}
    \label{sde_value_fxn}
    u(t,x) = \min_{\zeta \in \mathcal{Z}} C_{t,x}(\zeta),
\end{equation}
where $\mathcal{Z} = \left\{ f \in C^1 \left( [0,T] \cross \mathbb{R}^d \rightarrow \mathbb{R}^d \right) \right\}$ is the set of admissible deterministic $d$-dimensional Markov controls~\citep{hjb_derivation}.

%\blue{
%\begin{equation}
%\label{eqn:admissible_control_space}
%	\mathcal{Z} = \left\{ f \in [0,T] \cross \mathbb{R}^d \rightarrow \mathbb{R}^d: f \in C^1 \left( [0,T] \cross \mathbb{R}^d; \mathbb{R}^d \right) \right\} \cdot
%\end{equation}
%In~\eqref{eqn:admissible_control_space}, the notation $C^1 \left( [0,T] \cross \mathbb{R}^d; \mathbb{R}^d \right)$ means that $\zeta$ should have continuous derivatives up to order one in both time and space.} 
Subsequently, we solve the optimization problem \eqref{sde_value_fxn} under dynamics \eqref{eqn:sde_sde_is} to determine optimal control $\zeta(\cdot,\cdot)$. In existing literature on control theory, several methods were proposed to solve such minimization problems~\citep{is_entropy,is_cross_entropy}, and the analytical solution for \eqref{sde_value_fxn} is provided by the HJB PDE~ \citep{hjb_derivation}. Herein, we rederive the HJB PDE by using a dynamic programming approach, wherein cost function \eqref{eqn:sde_cost_fxn} possesses a multiplicative structure instead of an additive structure with a certain running plus terminal cost, commonly encountered in optimal control problems. Therefore, we first state and prove a dynamic programming equation for the value function $u$.

\begin{lem}[Dynamic Programming for Standard SDEs]
\label{lemma:dyn_prog}
\hspace{1mm} Let process $Y$ solve the SDE \eqref{eqn:sde_defn}, and controlled process $Y_\zeta$ solve the SDE \eqref{eqn:sde_sde_is}, where  $\zeta:[0,T] \cross \mathbb{R}^d \rightarrow \mathbb{R}^d$ is the control. Assume the value function $u:[0,T] \cross \mathbb{R}^d \rightarrow \mathbb{R}^d$, defined in \eqref{sde_value_fxn}, has bounded and continuous derivatives up to first order in time and second order in space. Then, $u$ satisfies the following dynamic programming relation for all $0<\delta<T-t$,
\begin{equation}
\label{eqn:sde_dyn_prog}
    u(t,x) = \min_{\zeta:[t,t+\delta] \rightarrow \mathbb{R}^d} J_{t,x}(\zeta), \quad \forall x \in \mathbb{R}^d  ,  
\end{equation}
where 
\begin{align}
    J_{t,x}(\zeta) &= \mathbb{E}\Bigg[ \Bigg. \exp{-\int_t^{t+\delta} \norm{\zeta(s,Y_\zeta(s))}^2 \dd s - 2 \int_t^{t+\delta} \langle \zeta(s,Y_\zeta(s)),\dd W(s)\rangle} \nonumber \\ & \qquad u(t+\delta,Y_\zeta(t+\delta)) \quad \Bigg| \quad Y_\zeta(t) = x \Bigg. \Bigg] \cdot
\end{align}
\end{lem}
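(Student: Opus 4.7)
The plan is to establish the dynamic programming principle by exploiting two structural features: first, the multiplicative (rather than additive) running weight $\exp\!\bigl(-\int \|\zeta\|^{2}ds - 2\int\langle\zeta,dW\rangle\bigr)$ splits exactly across any time break because of the identity $\exp(A+B)=\exp(A)\exp(B)$; second, the state $Y_{\zeta}$ is Markov, so the cost accumulated on $[t+\delta,T]$ depends on the past only through $Y_{\zeta}(t+\delta)$. Combining these lets us condition on $\mathcal{F}_{t+\delta}$ and recognise the inner conditional expectation as another instance of $C_{t+\delta,\,Y_{\zeta}(t+\delta)}(\zeta)$.

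Concretely, I would proceed in the following steps. First, for any admissible $\zeta$ on $[t,T]$ I would split the exponent and write
\begin{align*}
&G^{2}(Y_{\zeta}(T))\exp\!\Bigl(-\!\!\int_{t}^{T}\!\!\|\zeta\|^{2}ds-2\!\!\int_{t}^{T}\!\!\langle\zeta,dW\rangle\Bigr)\\
&\quad=\exp\!\Bigl(-\!\!\int_{t}^{t+\delta}\!\!\|\zeta\|^{2}ds-2\!\!\int_{t}^{t+\delta}\!\!\langle\zeta,dW\rangle\Bigr)\cdot G^{2}(Y_{\zeta}(T))\exp\!\Bigl(-\!\!\int_{t+\delta}^{T}\!\!\|\zeta\|^{2}ds-2\!\!\int_{t+\delta}^{T}\!\!\langle\zeta,dW\rangle\Bigr).
\end{align*}
Next, conditioning on $\mathcal{F}_{t+\delta}$ and invoking the Markov property of $Y_{\zeta}$ (together with the fact that $\zeta$ is Markov, so the future control depends only on the future state), the conditional expectation of the second factor equals $C_{t+\delta,\,Y_{\zeta}(t+\delta)}(\zeta)$. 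Using the tower property then yields
\[
C_{t,x}(\zeta)=\E{\exp\!\Bigl(-\!\!\int_{t}^{t+\delta}\!\!\|\zeta\|^{2}ds-2\!\!\int_{t}^{t+\delta}\!\!\langle\zeta,dW\rangle\Bigr)\,C_{t+\delta,\,Y_{\zeta}(t+\delta)}(\zeta)\mid Y_{\zeta}(t)=x}.
\]

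Third, I would deduce the two inequalities separately. For the lower bound $u(t,x)\ge \inf_{\zeta}J_{t,x}(\zeta)$, I use $C_{t+\delta,y}(\zeta)\ge u(t+\delta,y)$ pointwise inside the outer expectation, so $C_{t,x}(\zeta)\ge J_{t,x}(\zeta\!\restriction_{[t,t+\delta]})$; minimising over $\zeta$ on the left gives the inequality. For the reverse bound $u(t,x)\le \inf_{\zeta}J_{t,x}(\zeta)$, fix $\zeta_{1}$ on $[t,t+\delta]$ and $\varepsilon>0$; by a measurable selection argument there exists $\zeta_{2}^{\varepsilon}(\cdot,\cdot)$ on $[t+\delta,T]\times\mathbb{R}^{d}$ with $C_{t+\delta,y}(\zeta_{2}^{\varepsilon})\le u(t+\delta,y)+\varepsilon$ for all $y$. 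Concatenating $\zeta_{1}$ and $\zeta_{2}^{\varepsilon}$ produces an admissible control whose cost is at most $J_{t,x}(\zeta_{1})+\varepsilon\,\E{\exp(\cdots)}$; letting $\varepsilon\downarrow 0$ and taking the infimum over $\zeta_{1}$ closes the loop.

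The main obstacles I expect are, in order of difficulty: (i) the measurable selection step for the $\le$ direction, which requires that the admissible class $\mathcal{Z}$ be stable under pasting at the stopping level $t+\delta$ and that $u(t+\delta,\cdot)$ is sufficiently regular (typically one invokes a Jankov--von Neumann type selection, or else argues inside a class of feedback controls where the selection is automatic via the regularity hypothesis stated in the lemma); (ii) justifying the use of the Markov property in the strong form $\E{\varphi(Y_{\zeta}(\cdot))\mid \mathcal{F}_{t+\delta}}=\E{\varphi(Y_{\zeta}(\cdot))\mid Y_{\zeta}(t+\delta)}$ with the random weight inside, which is handled by absorbing the $\mathcal{F}_{t+\delta}$-measurable exponential factor on $[t,t+\delta]$ out of the conditional expectation; and (iii) integrability, which must be invoked to apply Fubini/tower. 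The remainder of the proof is bookkeeping, and the conclusion $u(t,x)=\min_{\zeta}J_{t,x}(\zeta)$ follows from combining the two inequalities.
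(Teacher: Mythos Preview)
Your proposal is correct and follows essentially the same route as the paper: split the multiplicative weight at $t+\delta$, apply the tower property together with the Markov property of $Y_{\zeta}$ to identify the inner conditional expectation with $C_{t+\delta,Y_{\zeta}(t+\delta)}(\zeta)$, and then argue the two inequalities separately by comparing $C_{t+\delta,\cdot}$ with $u(t+\delta,\cdot)$. The only notable difference is that the paper \emph{assumes at the outset that the minimiser of $C_{t,x}$ is attained}; this lets it work directly with an exact optimal control $\zeta^{*}$ on $[t+\delta,T]$ in the $\leq$ direction and avoids your $\varepsilon$-optimal measurable selection step entirely, so obstacle~(i) in your list simply does not arise in the paper's argument.
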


\begin{proof}
See Appendix~\ref{appendix:a1}.
\end{proof}

From Lemma \ref{lemma:dyn_prog}, we can derive the PDE which solves for  value function \eqref{sde_value_fxn} and subsequently obtain the optimal control, $\zeta$.

\begin{thm}[Optimal Control to Minimize Variance for Standard SDEs]
\label{th:sde_optimal_control}
    \hspace{1mm} Let process $Y$ solve the SDE \eqref{eqn:sde_defn}, and controlled process $Y_\zeta$ solve the SDE \eqref{eqn:sde_sde_is}, where  $\zeta:[0,T] \cross \mathbb{R}^d \rightarrow \mathbb{R}^d$ is the control. In addition, assume that value function $u$ defined in \eqref{sde_value_fxn} has bounded and continuous derivatives up to first order in time and second order in space~\citep{hjb_derivation}, and $u(t,x) \neq 0 \quad \forall (t,x) \in [0,T] \cross \mathbb{R}^d$. Then, $u$ satisfies the following PDE:
    \begin{empheq}[left=\empheqlbrace, right = \cdot]{alignat=2}
    \label{eqn:sde_hjb_form1}
    \begin{split}
        &\frac{\partial u}{\partial t} + \langle b\left(t,x \right), \nabla u \rangle + \frac{1}{2} \nabla^2 u : \left(\sigma \sigma^T\right) \left(t,x \right) \\
        & - \frac{1}{4u} \norm{\sigma^T \nabla u \left(t,x \right)}^2 = 0, \quad (t,x) \in [0,T) \cross \mathbb{R}^d \\
        &u(T,x) = G^2(x), \quad x \in \mathbb{R}^d  , 
    \end{split}
    \end{empheq}
    with an optimal control that minimizes the second moment defined in \eqref{eqn:sde_cost_fxn},
    \begin{equation}
    \label{eqn:sde:hjb_minimizer}
    \zeta^*(t,x) = \frac{1}{2} \sigma^T \left(t,x \right) \nabla \log u (t,x),
    \end{equation}
where $\nabla \cdot$ is the gradient vector, $\nabla^2 \cdot$ is the Hessian matrix for a scalar function, and $\cdot : \cdot$ is the Frobenius inner product between two matrix-valued functions.
\end{thm}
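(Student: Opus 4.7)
The plan is to derive the HJB equation by applying Itô's formula to the product of the likelihood process and the value function, then minimize pointwise in the control. The starting point is the dynamic programming identity from Lemma~\ref{lemma:dyn_prog}, which, together with the assumed smoothness and positivity of $u$, allows us to extract an infinitesimal minimization principle.

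First I would introduce the likelihood process
\begin{equation*}
    M_s = \exp\Bigl(-\int_t^{s} \norm{\zeta(r,Y_\zeta(r))}^2 \dd r - 2\int_t^{s} \langle \zeta(r,Y_\zeta(r)), \dd W(r)\rangle\Bigr),
\end{equation*}
and compute $dM_s$ via Itô's formula, which yields $dM_s = M_s(\norm{\zeta}^2 \dd s - 2\langle\zeta,\dd W\rangle)$ after accounting for the quadratic variation. Applying Itô to $u(s,Y_\zeta(s))$ under the controlled dynamics \eqref{eqn:sde_sde_is} and then the product rule to $M_s u(s,Y_\zeta(s))$ (with cross variation $d\langle M,u\rangle_s = -2 M_s \langle\zeta, \sigma^T\nabla u\rangle \dd s$), I would collect the drift terms into the integrand
\begin{equation*}
    \mathcal{L}^\zeta u \;:=\; \frac{\partial u}{\partial t} + \langle b + \sigma\zeta,\nabla u\rangle + \tfrac{1}{2}\nabla^2 u:\sigma\sigma^T + u\norm{\zeta}^2 - 2\langle \zeta,\sigma^T\nabla u\rangle,
\end{equation*}
and discard the local-martingale parts after taking expectation, subject to standard integrability assumptions on $\sigma^T\nabla u$ and $M$.

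Substituting the Itô expansion into the dynamic programming identity of Lemma~\ref{lemma:dyn_prog}, dividing by $\delta$, and passing to the limit $\delta \downarrow 0$ (using continuity of the integrand and dominated convergence), I would obtain the pointwise HJB-type condition $\min_{\zeta \in \mathbb{R}^d} \mathcal{L}^\zeta u(t,x) = 0$. The $\zeta$-dependent part simplifies to the strictly convex quadratic $u\norm{\zeta}^2 - \langle \zeta,\sigma^T\nabla u\rangle$ (after cancelling $\langle \sigma\zeta,\nabla u\rangle$ against one of the cross-variation contributions), whose unique minimizer is $\zeta^*(t,x) = \tfrac{1}{2u}\sigma^T\nabla u = \tfrac{1}{2}\sigma^T\nabla \log u$, giving \eqref{eqn:sde:hjb_minimizer}. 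Plugging $\zeta^*$ back into $\mathcal{L}^\zeta u = 0$ collapses the quadratic part to $-\tfrac{1}{4u}\norm{\sigma^T\nabla u}^2$ and yields the PDE \eqref{eqn:sde_hjb_form1}. The terminal condition $u(T,x) = G^2(x)$ follows directly from the definitions \eqref{eqn:sde_cost_fxn} and \eqref{sde_value_fxn}.

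The main obstacles I anticipate are analytic rather than algebraic: first, justifying that the martingale increments have vanishing expectation (requiring local-martingale/true-martingale arguments on $\int M_s \langle \sigma^T\nabla u - 2u\zeta,\dd W\rangle$ under the regularity hypotheses), and second, interchanging the limit $\delta \to 0$ with the minimization over $\zeta$ so that the infinitesimal generator inherits the minimization structure. Both are standard in the dynamic programming literature, but they rely crucially on the stated boundedness, smoothness, and strict positivity of $u$, which ensures that $\log u$ and $\zeta^*$ are well-defined and that the multiplicative cost function does not degenerate.
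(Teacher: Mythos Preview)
Your proposal is correct and follows essentially the same route as the paper: start from the dynamic programming identity of Lemma~\ref{lemma:dyn_prog}, expand infinitesimally in $\delta$, take expectations to kill the martingale parts, and minimize the resulting quadratic $u\norm{\zeta}^2 - \langle\zeta,\sigma^T\nabla u\rangle$ over $\zeta$. The only cosmetic difference is that the paper Taylor-expands the two exponential factors separately and applies It\^o to $u$ alone, whereas you compute $dM_s$ by It\^o and then use the It\^o product rule on $M_s u(s,Y_\zeta(s))$; both computations produce the identical generator and the same minimizer \eqref{eqn:sde:hjb_minimizer}.
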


\begin{proof}
See Appendix \ref{appendix:a2}.
\end{proof}

We can obtain an HJB type PDE from \eqref{eqn:sde_hjb_form1} by considering the change of variable $u(t,x) = \exp{-2\gamma(t,x)}$, as shown in Corollary \ref{corr:sde_hjb}.
\begin{corr}[HJB PDE]
\label{corr:sde_hjb}
    \hspace{1mm}Let process $Y$ solve SDE \eqref{eqn:sde_defn}, and controlled process $Y_\zeta$ solve SDE \eqref{eqn:sde_sde_is}, where  $\zeta:[0,T] \cross \mathbb{R}^d \rightarrow \mathbb{R}^d$ is the control. In addition, assume that value function $u$ defined in \eqref{sde_value_fxn} has bounded and continuous derivatives up to first order in time and second order in space, and $u(t,x) \neq 0 \quad \forall (t,x) \in [0,T] \cross \mathbb{R}^d$. Then, $\gamma:[0,T] \cross \mathbb{R}^d \rightarrow \mathbb{R}^d$ satisfies the nonlinear HJB equation,
    \begin{empheq}[left=\empheqlbrace, right = \cdot]{alignat=2}
    \label{eqn:sde_hjb_form2}
    \begin{split}
        &\frac{\partial \gamma}{\partial t} + \langle b\left(t,x \right), \nabla \gamma \rangle + \frac{1}{2} \nabla^2 \gamma : \left(\sigma \sigma^T\right) \left(t,x \right) \\
        &- \frac{1}{2} \norm{\sigma^T \nabla \gamma \left(t,x\right)}^2 = 0, \quad (t,x) \in [0,T) \cross \mathbb{R}^d \\
        &\gamma(T,x) = - \log \abs{G(x)}, \quad x \in \mathbb{R}^d  , 
    \end{split}
    \end{empheq}
    with optimal control 
    \begin{equation}
        \zeta^*(t,x) = - \sigma^T (t,x) \nabla \gamma \left(t,x \right),
    \end{equation}
    which minimizes the second moment defined in \eqref{eqn:sde_cost_fxn}.
\end{corr}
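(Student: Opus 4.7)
The plan is to obtain the HJB PDE in \eqref{eqn:sde_hjb_form2} directly from the PDE \eqref{eqn:sde_hjb_form1} by substituting the change of variable $u(t,x) = \exp(-2\gamma(t,x))$ and simplifying. Since the value function $u$ is assumed bounded, smooth, and non-zero throughout its domain, the map $u \mapsto \gamma = -\tfrac12 \log u$ is well-defined and smooth, so $\gamma$ inherits the required regularity from $u$ and the chain rule applies term-by-term.

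First I would compute the elementary derivatives $\partial_t u = -2u\,\partial_t \gamma$, $\nabla u = -2u\,\nabla \gamma$, and the Hessian
\begin{equation*}
\nabla^2 u = -2u\,\nabla^2 \gamma + 4u\,(\nabla \gamma)(\nabla \gamma)^T.
\end{equation*}
Substituting these into \eqref{eqn:sde_hjb_form1} and using the identity $(\nabla \gamma)(\nabla \gamma)^T : (\sigma\sigma^T) = \|\sigma^T \nabla \gamma\|^2$ for the Frobenius product, the linear terms give $-2u(\partial_t \gamma + \langle b,\nabla \gamma\rangle) - u\,\nabla^2\gamma : (\sigma\sigma^T)$, the quadratic-in-gradient piece from the Hessian contributes $+2u\,\|\sigma^T \nabla \gamma\|^2$, and the nonlinear term $-\tfrac{1}{4u}\|\sigma^T \nabla u\|^2$ becomes $-u\,\|\sigma^T \nabla \gamma\|^2$. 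Dividing through by $-2u$ (allowed since $u \neq 0$) collapses these contributions to
\begin{equation*}
\partial_t \gamma + \langle b, \nabla \gamma \rangle + \tfrac{1}{2}\nabla^2 \gamma : (\sigma\sigma^T) - \tfrac{1}{2}\|\sigma^T \nabla \gamma\|^2 = 0,
\end{equation*}
which is exactly the PDE claimed in \eqref{eqn:sde_hjb_form2}.

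For the terminal condition, I would evaluate at $t=T$: $\exp(-2\gamma(T,x)) = u(T,x) = G^2(x)$, which, taking logarithms and using $\log G^2(x) = 2\log|G(x)|$, yields $\gamma(T,x) = -\log|G(x)|$ as required. Finally, the optimal control follows immediately by inserting the same substitution into the minimizer \eqref{eqn:sde:hjb_minimizer} of Theorem~\ref{th:sde_optimal_control}: $\zeta^*(t,x) = \tfrac12 \sigma^T \nabla \log u = \tfrac12 \sigma^T \nabla(-2\gamma) = -\sigma^T \nabla \gamma$.

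There is no real obstacle here; the argument is a direct algebraic manipulation based on the exponential change of variable, and the only thing to watch carefully is the sign bookkeeping in the nonlinear term and the factor $\tfrac12$ coming from the Hessian contribution, which is precisely what converts the $-\tfrac{1}{4u}\|\sigma^T \nabla u\|^2$ penalty in the PDE for $u$ into a $-\tfrac12 \|\sigma^T \nabla \gamma\|^2$ Hamiltonian term in the HJB PDE for $\gamma$. The non-vanishing assumption on $u$ is what guarantees that the substitution is globally valid on $[0,T] \times \mathbb{R}^d$.
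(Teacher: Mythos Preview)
Your proposal is correct and follows exactly the route the paper indicates: the corollary is stated immediately after Theorem~\ref{th:sde_optimal_control} with the remark that it is obtained from \eqref{eqn:sde_hjb_form1} via the change of variable $u(t,x)=\exp(-2\gamma(t,x))$, and no further proof is given. Your computation simply makes that substitution explicit, and the sign and factor bookkeeping you highlight is precisely the content of the omitted algebra.
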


%Theorem \ref{th:sde_hjb} results in a control $\zeta$ that minimizes the variance of the IS estimator using stochastic optimal control. In fact, the optimal control obtained in Theorem \ref{th:sde_hjb} leads to a zero variance estimator \citep{is_entropy}, provided $G(\cdot)$ does not change sign. 

\begin{rem}[Existence and uniqueness of HJB solutions]
	\hspace{1mm} Classical HJB solution theory requires that $\gamma$ has bounded and continuous derivatives up to first order in time and second order in space~\citep{hjb_derivation}. However, these assumptions can be relaxed by introducing the notion of viscosity solutions~\citep{Soner1997}. Since Corollary~\ref{corr:sde_hjb} is a well-known classical result, which we alternatively derive via an optimal control approach, we refer the readers to~\citep{Pham2009,hjb_derivation} for a more formal proof. 
\end{rem}

\begin{rem}[Equivalent HJB Formulation]
    \hspace{1mm}Previous approaches formulated the same optimal control for IS, but through variational characterization of thermodynamic free energy~\citep{is_entropy,hjb_derivation}, minimizing the following value function
    \begin{equation}
        \label{eqn:dvmsde_is_entropy}
        \gamma(t,x) = \inf_{\zeta \in \mathcal{Z}} \E{\int_t^T \frac{1}{2} \norm{\zeta(s,Y_\zeta(s))}^2 \dd s - \log (G(Y_\zeta(T))) \quad \Bigg| \quad Y_\zeta(t) = x},
    \end{equation}
where $Y_\zeta$ follows dynamics \eqref{eqn:sde_sde_is}, which leads to the HJB equation \eqref{eqn:sde_hjb_form2}. 
\end{rem}

Various approaches have been proposed to numerically solve \eqref{eqn:sde_hjb_form2} and obtain an approximate control. \citep{is_regression} solved the $d$-dimensional HJB PDE \eqref{eqn:sde_hjb_form2} using least-squares regression, whereas~\citep{is_model_reduction} solved it using model-reduction techniques for higher dimensions. Neural networks have also been employed to solve the HJB PDE in higher dimensions with stochastic gradient~\citep{is_entropy} and cross-entropy~\citep{is_cross_entropy} learning methods for the stochastic optimal control formulation \eqref{eqn:dvmsde_is_entropy}. In contrast, we use the equivalency of nonlinear HJB and linear Kolmogorov backward equation (KBE). It is worth recalling from previous discussions that $u$ is the value function that minimizes the second moment. The proposed change of measure with optimal control in Corollary \ref{corr:sde_hjb} produces a zero-variance estimator, provided that $G(\cdot)$ does not change sign. This can be seen by substituting $u(t,x)=v^2 (t,x)$ in \eqref{eqn:sde_hjb_form1} to obtain a PDE for $v:[0,T] \cross \mathbb{R}^d \rightarrow \mathbb{R}^d$:

\begin{empheq}[left=\empheqlbrace, right =,]{alignat=2}
    \label{eqn:sde_hjb_form3}
    \begin{split}
        &\frac{\partial v}{\partial t} + \langle b\left(t,x \right), \nabla v \rangle
        + \frac{1}{2} \nabla^2 v : \left(\sigma \sigma^T\right) \left(t,x \right) = 0 ,\quad (t,x) \in [0,T) \cross \mathbb{R}^d \\
        &v(T,x) = \abs{G(x)}, \quad x \in \mathbb{R}^d
    \end{split}
\end{empheq}
with optimal control 
\begin{equation}
    \label{eqn:sde_hjb_optimal_control}
    \zeta^*(t,x) = \sigma^T \left(t,x \right) \nabla \log v (t,x) \cdot
\end{equation}

As \eqref{eqn:sde_hjb_form3} is the KBE that solves for the conditional expectation $\E{\abs{G(Y(T))} \mid Y(t)=x}$, where process $Y$ follows dynamics \eqref{eqn:sde_defn}, the second moment is equal to the square of the first moment, hence leading to zero variance, provided that the sign of  $G(\cdot)$ remains constant. Thus, solving the linear KBE \eqref{eqn:sde_hjb_form3} is sufficient for obtaining an optimal control for the zero variance estimator, provided $G(\cdot)$ does not change sign. For IS purposes, roughly solving \eqref{eqn:sde_hjb_form3} is sufficient for obtaining substantial variance reduction. Hence, we employ MC methods with IS by employing a control derived by numerically solving \eqref{eqn:sde_hjb_form3} to estimate rare event probabilities. This is computationally much cheaper than directly solving the KBE to estimate the quantity of interest to achieve relative error tolerances. However, it would still be expensive to solve the multidimensional KBE when $d \gg 1$. This work does not consider high-dimensional problems, which are left for future work.

\begin{rem}[Using KBE for IS]
	\hspace{1mm} Derivation of a zero variance estimator that involves the solution to the KBE is a classical result~\citep{Newton1994}. As IS produces an unbiased estimator, it is sufficient to roughly approximate $v$ and its derivatives to get an approximate but useful control. \citep{hinds2023neural} applied this concept within the context of standard SDEs. Herein, we apply this in the context of MV-SDEs. 
\end{rem}

\section{Problem Setting}
\label{sec:setting}

Let $T>0$ be some finite terminal time and $X$ be the solution to the MV-SDE \eqref{eqn:mvsde}. Let $G: \mathbb{R}^d \longrightarrow \mathbb{R}$ be a given scalar observable function. Our objective is to build a computationally efficient estimator, $\mathcal{A}$, of $\E{G(X(T))}$ for some specified relative tolerance $\tol_{\mathrm{r}} > 0$ that satisfies the following:
\begin{equation}
    \label{eqn:mc_objective}
    \prob{\frac{\abs{\mathcal{A}-\E{G(X(T))}}}{\abs{\E{G(X(T))}}} < \tol_{\mathrm{r}}} > 1-\alpha,
\end{equation}
where $0 < \alpha \ll 1$ determines the confidence level. 

%In this section, we consider the following observables:

%\begin{itemize}
 %   \item $G(x) = \cos(x)$ is a common quantity of interest when trying to compute the total synchronisation of the Kuramoto model as defined in \eqref{eqn:kuramoto_sync}.
 %   \item $G(x) = \mathbbm{1}_{\{x>K\}}$ is used to approximate the probability of the rare event $\{X(T)>K\}$ for a sufficiently large $K$. 
%\end{itemize}

%When dealing with rare-event probabilities, the MC estimator often satisfies a relative error tolerance $\tol_\mathrm{r}$ rather than a global error tolerance $\tol$, where $\tol=\tol_{\mathrm{r}} \times \E{G(X(T))}$. 
The existence and uniqueness of solutions to the corresponding KBE are not straightforward problems; this is a consequence of drift/diffusion dependencies on the law $\mu_t$~\citep{kbe_mvsde}. This problem can be circumvented by formulating the KBE for the stochastic $P$-particle system \eqref{eqn:strong_approx_mvsde}. Computational costs for numerically solving the corresponding $P \cross d$-dimensional KBE for a given $\tol_{\mathrm{r}}$ scales exponentially with the dimension. In addition, conventional numerical schemes are not equipped to handle relative error tolerances, complicating error control even in one-dimensional problems. We overcome this issue by employing MC methods. In the context of rare events, the feasibility of naive MC rapidly diminishes, as the number of sample paths required to satisfy a given statistical error tolerance scales inversely with the probability of the event to be estimated. Thus, we combine IS with MC methods as a variance-reduction technique, producing computationally feasible estimates of rare-event probabilities. Section \ref{sec:is_sde} presented the method to obtain an optimal IS change of measure for standard SDEs. However, there are two main difficulties while solving the variance minimization problem for MV-SDEs.

\begin{enumerate}
    \item Deriving optimal change of measure for MV-SDEs is not straightforward, due to the dependency of drift and diffusion functions on $\mu_t$. Any change of measure will result in changes in drift $b(\cdot,\cdot)$ and diffusion $\sigma(\cdot,\cdot)$ coefficients. 
    \item Suppose we use stochastic $P$-particle approximation \eqref{eqn:strong_approx_mvsde} and wish to apply a change of measure on it. Solving a variance minimization problem in this context will result in an HJB PDE in $P \times d$ dimensions. Numerical methods to solve such an equation would suffer from the curse of dimensionality when $P \times d \gg 1$.
\end{enumerate}

Therefore, we use a decoupling approach~\citep{is_mvsde} instead of  considering a change of measure on the stochastic particle system or the MV-SDE itself.

\section{DLMC with IS}
\label{sec:dlmc}

\subsection{Decoupling Approach for MV-SDEs}

The decoupling approach was introduced by \citep{is_mvsde} as a method to efficiently implement IS for MV-SDEs. This method involves replacing the deterministic mean-field law ($\mu_t$) with an empirical approximation using a stochastic particle system, which is then used as an input to define the "decoupled" MV-SDE to which a change in measure is applied. This decouples the computation of the MV-SDE law and the probability measure change required for IS. The decoupled MV-SDE \eqref{eqn:mvsde} is now a standard SDE with random coefficients, for which change of measure can be applied, as discussed in Section \ref{sec:is_sde}. First, we formally introduce the general scheme of the decoupling approach.

\begin{enumerate}
    \item As we do not have direct access to $\{\mu_t: t \in [0,T]\}$ for the MV-SDE, we approximate it by using the empirical measure, $\{\mu_t^P: t \in [0,T]\}$, from \eqref{eqn:emp_dist_law} with particles $\{X^P_p(t): t \in [0,T]\}_{p=1}^P$ obtained from the stochastic interacting particle system~\eqref{eqn:strong_approx_mvsde}. %Note that one need not necessarily use the stochastic particle system \eqref{eqn:strong_approx_mvsde} to approximate the law of the MV-SDE. This scheme can be used for any other approximation method as well. The idea is to make the decoupled MV-SDE \eqref{eqn:decoupled_mvsde} independent of the approximation of the law.
    \item Given the empirical law, $\{\mu_t^P: t \in [0,T]\}$, we define the "decoupled" MV-SDE process, $\Bar{X}^P:[0,T] \times \Omega \rightarrow \mathbb{R}^d$, following the dynamics described as follows:
    \begin{empheq}[left=\empheqlbrace, right =,]{alignat=2}
        \label{eqn:decoupled_mvsde}
        \begin{split}
            &\dd \Bar{X}^P(t) = b\left(\Bar{X}^P(t), \frac{1}{P} \sum_{j=1}^P  \kappa_1(\Bar{X}^P(t),X^P_j(t)) \right) \dd t \\
            &\qquad + \sigma \left(\Bar{X}^P(t), \frac{1}{P} \sum_{j=1}^P  \kappa_2(\Bar{X}^P(t),X^P_j(t)) \right) \dd \Bar{W}(t), \quad t \in [0,T] \\
            &\Bar{X}^P(0) = \Bar{x}_0 \sim \mu_0, \quad \Bar{x}_0 \in \mathbb{R}^d , 
        \end{split}
    \end{empheq}
where the $P$ in $\Bar{X}^P(t)$ indicates that the drift and diffusion functions in \eqref{eqn:decoupled_mvsde} are computed using empirical law $\{\mu_t^P: t \in [0,T]\}$ derived from a $P$-particle system. In addition, the  drift and diffusion coefficients $b$ and $\sigma$ are the same as those  defined in Section~ \ref{sec:mvsde}; $\Bar{W}:[0,T] \cross \Omega \rightarrow \mathbb{R}^d$ is a standard $d$-dimensional Wiener process that is independent of the Wiener processes $\{W_p\}_{p=1}^P$ used in \eqref{eqn:strong_approx_mvsde}; and $\Bar{x}_0 \in \mathbb{R}^d$ is a random initial state sampled from the $\mu_0$ distribution as defined in \eqref{eqn:mvsde} and is independent from $\{(x_0)_p\}_{p=1}^P$ used in \eqref{eqn:strong_approx_mvsde}. Equation \eqref{eqn:decoupled_mvsde} is a standard SDE with random coefficients, whose randomness arises from drift and diffusion dependencies on the empirical measure $\{\mu_t^P: t \in [0,T]\}$. The existence and uniqueness of a solution to such SDEs have been shown previously~\citep{random_sde_theory}.
    \item We introduce a copy space to distinguish \eqref{eqn:decoupled_mvsde} from the stochastic $P$-particle system~\citep{is_mvsde}. Suppose \eqref{eqn:strong_approx_mvsde} is defined on the probability space $(\Omega,\mathcal{F},\mathbb{P})$. We define a copy space $(\Bar{\Omega},\Bar{\mathcal{F}},\Bar{\mathbb{P}})$, and \eqref{eqn:decoupled_mvsde} is defined on the product space $(\Omega,\mathcal{F},\mathbb{P}) \cross (\Bar{\Omega},\Bar{\mathcal{F}},\Bar{\mathbb{P}})$. Thus, $\mathbb{P}$ is a probability measure induced by the randomness of $\{\mu_t^P: t \in [0,T]\}$, and $\Bar{\mathbb{P}}$ is the  measure induced by the randomness driving the decoupled MV-SDE dynamics  \eqref{eqn:decoupled_mvsde}, conditioned on $\{\mu_t^P: t \in [0,T]\}$. 
    \item Thus, we approximate our quantity of interest as 
    \begin{align}
    \label{eqn:total_exp_mvsde}
        \E{G(X(T))} & \approx \mathbb{E}_{\mathbb{P} \otimes \Bar{\mathbb{P}}} \left[G(\Bar{X}^P(T))\right] \nonumber \\
        &= \mathbb{E}_\mathbb{P} \left[\mathbb{E}_{\Bar{\mathbb{P}}} \left[ G(\Bar{X}^P(T)) \mid \{\mu_t^P: t \in [0,T]\} \right]\right] ,
    \end{align}
Henceforth, for ease of notation, $\E{G(\Bar{X}^P(T))}$ indicates that the  expectation is taken with respect to all sources of randomness in the decoupled MV-SDE \eqref{eqn:decoupled_mvsde}. First, we estimate the inner expectation $\E{G(\Bar{X}^P(T))\mid \{\mu_t^P: t \in [0,T]\}}$, and then we estimate the outer expectation by using MC sampling over different realizations of the empirical law.
\end{enumerate}

 %For a given law $\{\mu_t^P: t \in [0,T]\}$, the KBE gives a deterministic PDE which solves for the conditional expectation $\E{G(\Bar{X}^P(T)) \mid \Bar{X}^P(0) = \Bar{x}_0,\quad \{\mu_t^P: t \in [0,T]\}}$.

%\begin{prop}[Kolmogorov Backward Equation for Decoupled MV-SDE]
 %   Let $\{\Bar{X}^P(t):t \in [0,T]\}$ represent the solution to the decoupled MV-SDE \eqref{eqn:decoupled_mvsde} for a given empirical law $\{\mu_t^P: t \in [0,T]\}$ derived from $\{X^P_p(t)\}_{p=1}^P$, the solution to the stochastic $P$-particle system \eqref{eqn:strong_approx_mvsde}. Let $G:\mathbb{R}^d \longrightarrow \mathbb{R}$ be a scalar observable. Let us define the value function at time $t \in [0,T)$ and state $x \in \mathbb{R}^d$ as follows
  %  \begin{equation}
   %     \label{eqn:value_fxn_decoupled_mvsde}
    %    v(t,x) = \E{G(\Bar{X}^P(T)) \mid \Bar{X}^P(t) = x,\quad \{\mu_t^P: t \in [0,T]\}} \cdot
    %\end{equation}
    %Then $v(t,x)$ solves the following PDE
    %\begin{empheq}[left=\empheqlbrace, right = \cdot]{alignat=2}
     %   \label{eqn:kbe_decoupled_mvsde}
      %  \begin{split}
       %     &\frac{\partial v}{\partial t}  + \sum_{i=1}^d b_i\left(x,\frac{1}{P}\sum_{j=1}^P \kappa_1(x,X^P_j)\right) \frac{\partial v}{\partial x_i} \\
        %    &+ \sum_{i=1}^d \sum_{j=1}^d  \frac{(\sigma \sigma^T)_{ij}}{2} \left(x,\frac{1}{P}\sum_{j=1}^P \kappa_2(x,X^P_j)\right) \frac{\partial^2 v}{\partial x_i \partial x_j} = 0 \quad , (t,x) \in [0,T) \cross \mathbb{R}^d  \\ 
         %   &v(T,x) = \abs{G(x)} \quad ,x \in \mathbb{R}^d 
        %\end{split}
    %\end{empheq}
%\end{prop}

The inner expectation $\E{G(\Bar{X}^P(T))\mid \{\mu_t^P: t \in [0,T]\}}$ solves the KBE associated with \eqref{eqn:decoupled_mvsde}. However, obtaining an analytical solution is not always possible, and conventional numerical methods cannot handle relative error tolerances even for $d=1$, which is relevant for rare events. Even if the KBE could be solved accurately, it would need to be solved multiple times for each empirical law realization, and this could prove cumbersome. Therefore, we propose to use MC methods coupled with IS, even for the one-dimensional case, to estimate the nested expectation. % One then reverts to numerical discretization schemes, which suffer from the curse of dimensionality. Note that PDE \eqref{eqn:kbe_decoupled_mvsde} is $d$-dimensional. This means that the computational cost required to solve \eqref{eqn:kbe_decoupled_mvsde} for a given tolerance scales exponentially with $d$. This becomes quickly infeasible for $d \gg 1$. It should also be taken into account that, the main purpose of the introduction of the decoupled MV-SDE is to implement IS in a simple manner to estimate rare event probabilities. In this context, it is relevant to estimate the quantity of interest to achieve relative tolerances, rather than the absolute tolerance. Solving the PDE \eqref{eqn:kbe_decoupled_mvsde} numerically for a given relative error tolerance can be extremely compute-intensive and conventional deterministic numerical methods do not handle relative errors.
\begin{rem}[Time Extension of the Empirical Law]
\hspace{1mm} In practice, we only have access to a time-discretized $\{\mu_t^P: t \in [0,T]\}$ from the Euler--Maruyama time discretization of  \eqref{eqn:strong_approx_mvsde}. However, $\mu^P_t$ must be defined continuously throughout the time domain for the decoupled MV-SDE to be well-defined. Therefore, we use the continuous Euler time extension for the discretized empirical law over the entire time domain.
\end{rem}

\subsection{Optimal IS for Decoupled MV-SDE using Stochastic Optimal Control}

We introduced the decoupled MV-SDE to implement an IS change of measure for a  given $\{\mu_t^P: t \in [0,T]\}$ on a standard lower-dimensional SDE, thereby  reducing MC estimator variance for rare-event probabilities. As \eqref{eqn:decoupled_mvsde} is a standard SDE for a fixed $\{\mu_t^P:t \in [0,T]\}$, we follow the procedure in Section~\ref{sec:is_sde} to derive the  optimal change of measure.

\begin{corr}[HJB PDE for decoupled MV-SDE]
\hspace{1mm} Let the decoupled process $\Bar{X}^P$ follow the dynamics \eqref{eqn:decoupled_mvsde} and let the controlled process $\Bar{X}^P_\zeta:[0,T] \cross \Omega \rightarrow \mathbb{R}^d$ follow the given controlled dynamics with control $\zeta:[0,T] \cross \mathbb{R}^d \rightarrow \mathbb{R}^d$:
    \begin{empheq}[left=\empheqlbrace, right = \cdot]{alignat=2}
    \label{eqn:dmvsde_sde_is}
      \begin{split}
        \dd \Bar{X}^P_\zeta(t) &= \Bigg( \Bigg. b\left(\Bar{X}^P_\zeta(t), \frac{1}{P} \sum_{j=1}^P  \kappa_1(\Bar{X}^P_\zeta(t),X^P_j(t)) \right) \\
        &+ \sigma \left(\Bar{X}^P_\zeta(t), \frac{1}{P} \sum_{j=1}^P  \kappa_2(\Bar{X}^P_\zeta(t),X^P_j(t)) \right) \zeta(t,\Bar{X}^P_\zeta(t)) \Bigg. \Bigg) \dd t \\
        &+ \sigma \left(\Bar{X}^P_\zeta(t), \frac{1}{P} \sum_{j=1}^P  \kappa_2(\Bar{X}^P_\zeta(t),X^P_j(t)) \right) \dd W(t), \quad 0<t<T \\ 
         \Bar{X}^P_\zeta(0) &= \Bar{X}^P(0) = \Bar{x}_0 \sim \mu_0 . 
        \end{split}
    \end{empheq}
    Assuming that \eqref{eqn:strong_approx_mvsde} is used to compute $\{\mu^P_t:t \in [0,T]\}$ in \eqref{eqn:decoupled_mvsde} and \eqref{eqn:dmvsde_sde_is}, the value function $u:[0,T] \cross \mathbb{R}^d \rightarrow \mathbb{R}^d$ that minimizes the second moment can be defined as 
    \begin{align}
    \label{eqn:dmvsde_value_fxn}
        u(t,x) &= \min_{\zeta \in \mathcal{Z}}      \mathbb{E}\Bigg[\Bigg.G^2(\Bar{X}_\zeta^P(T)) \exp{-\int_t^T \norm{\zeta(s,\Bar{X}_\zeta^P(s))}^2 - 2 \int_t^T \langle \zeta(s,\Bar{X}_\zeta^P(s)),\dd W(s) \rangle} \nonumber\\
        &\qquad \mid \Bar{X}_\zeta^P(t) = x, \{\mu_t^P: t \in [0,T]\}\Bigg.\Bigg] \cdot
    \end{align} 
    Here, $\mathcal{Z} = \left\{ f \in C^1 \left( [0,T] \cross \mathbb{R}^d \rightarrow \mathbb{R}^d \right) \right\}$ is a set of admissible deterministic $d$-dimensional Markov controls. Next, assume that $u$ has bounded and continuous derivatives up to first order in time and second order in space, and $u(t,x) \neq 0 \quad \forall (t,x) \in \mathbb{R}^d$. Define a new function $\gamma:[0,T] \cross \mathbb{R}^d \rightarrow \mathbb{R}^d$, such that
    \begin{equation}
        u(t,x) = \exp{-2 \gamma(t,x)} \cdot
    \end{equation}
    Then, $\gamma$ satisfies the following nonlinear HJB equation:
    \begin{empheq}[left=\empheqlbrace, right =,]{alignat=2}
    \label{eqn:dmvsde_hjb_form2}
    \begin{split}
        &\frac{\partial \gamma}{\partial t} + \langle b\left(x, \frac{1}{P} \sum_{j=1}^P  \kappa_1(x,X^P_j(t)) \right), \nabla \gamma \rangle + \frac{1}{2} \nabla^2 \gamma : \left(\sigma \sigma^T\right) \left(x, \frac{1}{P} \sum_{j=1}^P  \kappa_2(x,X^P_j(t)) \right)\\
        & - \frac{1}{4} \norm{\sigma^T \nabla \gamma \left(x, \frac{1}{P} \sum_{j=1}^P  \kappa_2(x,X^P_j(t)) \right)}^2 = 0, \quad (t,x) \in [0,T) \cross \mathbb{R}^d \\
        &\gamma(T,x) = - \log \abs{G(x)}, \quad x \in \mathbb{R}^d , 
    \end{split}
    \end{empheq}
    with optimal control 
    \begin{equation}
        \zeta^*(t,x) = - \sigma^T \left(x, \frac{1}{P} \sum_{j=1}^P  \kappa_2(x,X^P_j(t)) \right) \nabla \gamma \left(t,x \right),
    \end{equation}
    which minimizes the second moment \eqref{eqn:dmvsde_value_fxn} conditioned on $\{\mu_t^P: t \in [0,T]\}$.
\end{corr}

The HJB PDE that solves for $u$ results in a zero-variance estimator for a given empirical law, $\mu^P_t$, provided $G(\cdot)$ does not change sign. Hence, we can recover the linear KBE, as shown in Section~\ref{sec:is_sde}, and obtain the following control PDE in the MV-SDE context, 
\begin{empheq}[left=\empheqlbrace, right =,]{alignat=2}
    \label{eqn:dmvsde_hjb_form3}
    \begin{split}
        &\frac{\partial v}{\partial t} + \langle b\left(x, \frac{1}{P} \sum_{j=1}^P  \kappa_1(x,X^P_j(t)) \right), \nabla v \rangle \\
        & \qquad + \frac{1}{2} \nabla^2 v : \left(\sigma \sigma^T\right) \left(x, \frac{1}{P} \sum_{j=1}^P  \kappa_2(x,X^P_j(t)) \right) = 0 ,\quad (t,x) \in [0,T) \cross \mathbb{R}^d \\
        &v(T,x) = \abs{G(x)}, \quad x \in \mathbb{R}^d , 
    \end{split}
\end{empheq}
with optimal control 
\begin{equation}
    \label{eqn:dmvsde_hjb_optimal_control}
    \zeta^*(t,x) = \sigma^T \left(x, \frac{1}{P} \sum_{j=1}^P  \kappa_2(x,X^P_j(t)) \right) \nabla \log v (t,x) \cdot
\end{equation}

\begin{rem}[Zero-Variance Control for MV-SDEs]
\label{rem:zero_variance}
	\hspace{1mm} Let us assume that the deterministic mean-field law ($\mu_t$) is given beforehand. If we condition the decoupled MV-SDE~\eqref{eqn:decoupled_mvsde} on $\{\mu_t:t \in [0,T]\}$, the controls given by~\eqref{eqn:dmvsde_hjb_form3} and~\eqref{eqn:dmvsde_hjb_optimal_control} result in a zero-variance estimator of $\E{G(X(T))}$ for MV-SDEs, given that the sign of $G(\cdot)$ remains unchanged. However, in practice, we only approximate $\mu_t$ by an empirical estimate, resulting in the DLMC estimator.
\end{rem}

Herein, we numerically solve the one-dimensional ($d=1$) KBE corresponding to the Kuramoto model \eqref{eqn:kuramoto_model} numerically using finite differences. Model reduction techniques~\citep{is_model_reduction,is_projection} or the solving of minimization problem \eqref{eqn:dmvsde_value_fxn} using stochastic gradient methods~\citep{is_entropy} are appropriate for higher-dimensional ($d \gg 1$) problems. Due to the lack of closed-form solutions for the inner or outer expectations, obtaining numerical KBE-based solutions for both expectations that satisfy relative error tolerance is infeasible, even in the one-dimensional case. This necessitates the use of a DLMC estimator in this context. The DLMC estimator is constructed through the following steps:

\begin{enumerate}
    \item Approximate the mean-field law $\mu_t$ in \eqref{eqn:mvsde} by the empirical $\mu_t^P$ using \eqref{eqn:strong_approx_mvsde}. In practice, we obtain the discretized empirical law with $N_1$ time steps from Euler--Maruyama time discretization of the particle system. Consider the discretization $0=t_0<t_1<t_2<\ldots<t_{N_1} = T$ of the time domain $[0,T]$ with $N_1$ equal time steps. Hence, $t_n = n \times \Delta t_1, \quad n=0,1,\ldots,N_1$; and $\Delta t_1 = T/{N_1}$. Let us denote by $X_p^{P|N_1}$ the time-discretized version for state $X_p^P$ corresponding to \eqref{eqn:strong_approx_mvsde}. 
    %Then, Euler-Maruyama time discretization for \eqref{eqn:strong_approx_mvsde} with $p=1,\ldots,P$ can be expressed as 
    %\begin{align}
     %   \label{sps_euler_scheme}
      %  X_p^{P|N_1}(t_{n+1}) = X_p^{P|N_1}(t_n) &+ b\left(X_p^{P|N_1}(t_n), \frac{1}{P} \sum_{j=1}^P  \kappa_1(X_p^{P|N_1}(t_n),X_j^{P|N_1}(t_n))\right) \Delta t_1 \\
       % &+ \sigma \left(X_p^{P|N_1}(t_n), \frac{1}{P} \sum_{j=1}^P  \kappa_2(X_p^{P|N_1}(t_n),X_j^{P|N_1}(t_n))\right) \Delta W_p (t_n) ,  \nonumber
    %\end{align}
%for $n = 1,\ldots,N_1-1$,    where $\Delta W_p (t_n)$ are independent Wiener increments sampled from the $\mathcal{N}(0,\Delta t_1 \mathbb{I}_d)$ distribution.
    \item Let $\mu^{P|N_1}$ be the discrete law obtained from the above-mentioned step, then
    \begin{equation}
    \label{eqn:dmvsde_discrete_law}
        \mu^{P|N_1}(t_n) = \frac{1}{P} \sum_{p=1}^P \delta_{X_p^{P|N_1}(t_n)}, \quad \forall n=0,\ldots,N_1 \cdot 
    \end{equation}

In order to define a time-continuous extension for the empirical law, we extend the time-discrete stochastic particle system to all $t \in [0,T]$ by  using the continuous-time Euler extension. Given $\{X_p^{P|N_1}(t_n)\}_{p=1}^P$ for all time steps $n=0,\ldots,N_1$, we have 
    \begin{align}
        \label{sps_euler_extension}
        &X_p^{P|N_1}(t) = X_p^{P|N_1}(t_n) + b\left(X_p^{P|N_1}(t_n), \frac{1}{P} \sum_{j=1}^P  \kappa_1(X_p^{P|N_1}(t_n),X_j^{P|N_1}(t_n))\right) (t - t_n) \\
        &+ \sigma \left(X_p^{P|N_1}(t_n), \frac{1}{P} \sum_{j=1}^P  \kappa_2(X_p^{P|N_1}(t_n),X_j^{P|N_1}(t_n))\right) (W(t) - W(t_n)), \quad t_n < t < t_{n+1} \cdot \nonumber
    \end{align}
    The time-continuous empirical law is then defined as follows
    \begin{equation}
    \label{eqn:law_euler_extension}
        \mu^{P|N_1}(t) = \frac{1}{P} \sum_{j=1}^P \delta_{X_j^{P|N_1}(t)}, \quad \forall t \in [0,T] \cdot
    \end{equation}
    
    \item Given $\mu^{P|N_1}$ from \eqref{eqn:dmvsde_discrete_law}, we use \eqref{eqn:dmvsde_hjb_form3} and \eqref{eqn:dmvsde_hjb_optimal_control} to obtain the optimal control $\zeta(\cdot,\cdot)$ for IS.
    \item Given $\mu^{P|N_1}$ from \eqref{eqn:dmvsde_discrete_law} and control $\zeta:[0,T] \times \mathbb{R}^d \rightarrow \mathbb{R}^d$, we define the controlled decoupled process $\Bar{X}^{P|N_1}_\zeta$. Its dynamics can be derived by slightly modifying \eqref{eqn:dmvsde_sde_is}, indicating that the drift and diffusion coefficients depend on the time extended version of $\mu^{P|N_1}$ obtained using $P$ particles and $N_1$ time steps:
    \begin{empheq}[left=\empheqlbrace, right = \cdot]{alignat=2}
    \label{eqn:dmvsde_sde_is_form2}
    \begin{split}
        &\dd \Bar{X}^{P|N_1}_\zeta(t) = \Bigg( \Bigg. b\left(\Bar{X}^{P|N_1}_\zeta(t), \frac{1}{P} \sum_{j=1}^P  \kappa_1(\Bar{X}^{P|N_1}_\zeta(t),X^{P|N_1}_j(t)) \right) \\
        &+ \sigma \left(\Bar{X}^{P|N_1}_\zeta(t), \frac{1}{P} \sum_{j=1}^P  \kappa_2(\Bar{X}^{P|N_1}_\zeta(t),X^{P|N_1}_j(t)) \right) \zeta(t,\Bar{X}^{P|N_1}_\zeta(t)) \Bigg. \Bigg) \dd t \\
        &+ \sigma \left(\Bar{X}^{P|N_1}_\zeta(t), \frac{1}{P} \sum_{j=1}^P  \kappa_2(\Bar{X}^{P|N_1}_\zeta(t),X^{P|N_1}_j(t)) \right) \dd W(t), \quad 0<t<T \\ 
        &\Bar{X}^{P|N_1}_\zeta(0) = \Bar{x}_0 \sim \mu_0 . 
    \end{split}
    \end{empheq}
    \item We use the Euler--Maruyama time discretization with $N_2$ time steps. In our notation, we indicate the three discretization parameters used, namely, $P, N_1$, and $N_2$. Consider a new discretization, $0=\Bar{t}_0<\Bar{t}_1<\Bar{t}_2<\ldots<\Bar{t}_{N_2} = T$, of the  time domain $[0,T]$ with $N_2$ uniform time steps. Hence, $\Bar{t}_n = n \times \Delta t_2, \quad n=0,1,\ldots,N_2$, and $\Delta t_2 = \frac{T}{N_2}$.
    \begin{itemize}
        \item For time step $n=0$, 
        \begin{equation}
            \Bar{X}^{P|N_1|N_2}_\zeta(\Bar{t}_0) = \Bar{x}_0 \cdot \nonumber
        \end{equation}
        \item For $n=1,\ldots,N_2-1$
        \begin{align}
            &\Bar{X}^{P|N_1|N_2}_\zeta(\Bar{t}_{n+1}) = \Bar{X}^{P|N_1|N_2}_\zeta(\Bar{t}_n) + \Bigg( \Bigg. b\left(\Bar{X}^{P|N_1|N_2}_\zeta(\Bar{t}_n), \frac{1}{P} \sum_{j=1}^P  \kappa_1 (\Bar{X}^{P|N_1|N_2}_\zeta(\Bar{t}_n),X_j^{P|N_1}(\Bar{t}_n)) \right) \nonumber \\
            &+ \sigma \left(\Bar{X}^{P|N_1|N_2}_\zeta(\Bar{t}_n), \frac{1}{P} \sum_{j=1}^P  \kappa_2(\Bar{X}^{P|N_1|N_2}_\zeta(\Bar{t}_n),X_j^{P|N_1}(\Bar{t}_n))\right) \zeta(\Bar{t}_n,\Bar{X}^{P|N_1|N_2}_\zeta(\Bar{t}_n)) \Bigg. \Bigg) \Delta t_2 \nonumber \\
            \label{eqn:dmvsde_euler_is_form2}
            &+ \sigma \left(\Bar{X}^{P|N_1|N_2}_\zeta(\Bar{t}_n), \frac{1}{P} \sum_{j=1}^P  \kappa_2(\Bar{X}^{P|N_1|N_2}_\zeta(\Bar{t}_n), X_j^{P|N_1}(\Bar{t}_n)) \right) \Delta \bar{W}_n, 
        \end{align}
        where $\Delta \bar{W}_n \sim \mathcal{N}(0,\sqrt{\Delta t_2} \mathbb{I}_d)$.
        \end{itemize}
        \item Given $\{\Bar{X}^{P|N_1|N_2}_\zeta(\Bar{t}_n)\}_{n=1}^{N_2}$, we can express the quantity of interest with IS as 
        \begin{equation}
        \label{eqn:dmvsde_qoi_is}
            \E{G(\Bar{X}^{P|N_1|N_2}(T))} = \E{G(\Bar{X}_\zeta^{P|N_1|N_2}(T))\mathbb{L}^{P|N_1|N_2}},
        \end{equation}
        where, similar to \eqref{eqn:is_sde_discrete_llhood_2}, the likelihood factor can be expressed as 
        \begin{equation}
        \label{eqn:dlmc_llhood_factor}
            \mathbb{L}^{P|N_1|N_2} = \prod_{n=0}^{N_2-1} \exp{-\frac{1}{2} \Delta t_2 \norm{\zeta(\Bar{t}_n,\Bar{X}_\zeta^{P|N_1|N_2}(\Bar{t}_n))}^2 - \langle \Delta \bar{W}_n , \zeta(\Bar{t}_n,\Bar{X}^{P|N_1|N_2}_\zeta(\Bar{t}_n)) \rangle} \cdot
        \end{equation}
Observe that $\mathbb{L}^{P|N_1|N_2}=1$ when $\zeta(t,x)=0, \quad \forall (t,x) \in [0,T] \times \mathbb{R}^d$.
\end{enumerate}

We propose a general DLMC algorithm (Algorithm~\ref{alg:dlmc_general}) to estimate the required quantity of interest by using Steps 1-6. The output of Algorithm \ref{alg:dlmc_general} is our DLMC estimator, which is defined as follows: 
\begin{equation}
\label{eqn:dmvsde_dlmc_est}
    \mathcal{A}_{\mathrm{MC}} = \frac{1}{M_1} \sum_{m_1=1}^{M_1} \frac{1}{M_2} \sum_{m_2=1}^{M_2} G\left(\Bar{X}_\zeta^{P|N_1|N_2}(T) \right) \mathbb{L}^{P|N_1|N_2} \left( \omega_{1:P}^{(m_1)}, \tilde{\omega}^{(m_2)} \right) \cdot
\end{equation}

In~\eqref{eqn:dmvsde_dlmc_est}, notation $\omega_{1:P}^{(m_1)}$ denotes the $m_1^{\mathrm{th}}$ realization of the $P$ sets of random variables (Wiener increments and initial states) that are used in calculating $\{X_p^{P \mid N_1}(t_n)\}_{n=1}^{N_1}$ for all particles $p=1,\ldots,P$. $\tilde{\omega}^{(m_2)}$ denotes the $m_2^{\mathrm{th}}$ realization of random variables used in calculating $\{\bar{X}_\zeta^{P \mid N_1 \mid N_2}(\bar{t}_n)\}_{n=1}^{N_2}$, given $\mu^{P|N_1}$.

\begin{algorithm}
    \caption{General DLMC algorithm for decoupled MV-SDE}
\label{alg:dlmc_general}
    \SetAlgoLined
    \textbf{Inputs: } $P,N_1,N_2,M_1,M_2$; \\
    \For{$m_1=1,\ldots,M_1$}{
    Generate realization of random variables $\omega_{1:P}^{(m_1)}$; \\
    Generate realization of law $\mu^{P|N_1} \left( \omega_{1:P}^{(m_1)} \right)$ with $P$-particle system and $N_1$ time steps using \eqref{eqn:dmvsde_discrete_law}; \\
    %Compute time ex??on o?$\left(\mu^{P|N_1}\right)^{(m_1)}$ using \eqref{sps_euler_extension} and \eqref{eqn:law_eu??xten? \\
    Given $\mu^{P|N_1} \left( \omega_{1:P}^{(m_1)} \right)$ solve \eqref{eqn:dmvsde_hjb_form3} to obtain control $\zeta(\cdot,\cdot) \left( \omega_{1:P}^{(m_1)} \right)$; \\
    \For{$m_2=1,\ldots,M_2$}{
    Generate realization of random variables $\tilde{\omega}^{(m_2)}$; \\
    Given $\mu^{P|N_1} \left( \omega_{1:P}^{(m_1)} \right)$ and $\zeta(\cdot,\cdot) \left( \omega_{1:P}^{(m_1)} \right)$, solve decoupled MV-SDE with $N_2$ time steps using \eqref{eqn:dmvsde_euler_is_form2};\\
    Compute $G\left(\Bar{X}_\zeta^{P|N_1|N_2}(T) \right) \left( \omega_{1:P}^{(m_1)}, \tilde{\omega}^{(m_2)} \right)$;\\
    Compute $\mathbb{L}^{P|N_1|N_2} \left( \omega_{1:P}^{(m_1)}, \tilde{\omega}^{(m_2)} \right)$ using \eqref{eqn:dlmc_llhood_factor};
    }
    Approximate $\E{G\left(\Bar{X}^{P|N_1|N_2}(T)\right) \mid \mu^{P|N_1} \left( \omega_{1:P}^{(m_1)} \right)}$ by $\frac{1}{M_2} \sum_{m_2=1}^{M_2} G\left(\Bar{X}_\zeta^{P|N_1|N_2}(T)\right) \mathbb{L}^{P|N_1|N_2} \left( \omega_{1:P}^{(m_1)}, \tilde{\omega}^{(m_2)} \right)$; \\
    }
    Approximate $\E{G\left(\Bar{X}^{P|N_1|N_2}(T)\right)}$ by $\frac{1}{M_1} \sum_{m_1=1}^{M_1} \frac{1}{M_2} \sum_{m_2=1}^{M_2} G\left(\Bar{X}_\zeta^{P|N_1|N_2}(T)\right) \mathbb{L}^{P|N_1|N_2} \left( \omega_{1:P}^{(m_1)}, \tilde{\omega}^{(m_2)} \right)$ ;
\end{algorithm}

\subsubsection{Error Analysis}

We bound the global relative error introduced by the DLMC estimator, $\mathcal{A}_{\mathrm{MC}}$, as 
\begin{align}
\label{eqn:dlmc_global_err}
    \frac{\abs{\E{G(X(T))} - \mathcal{A}_{\mathrm{MC}}}}{\abs{\E{G(X(T))}}} & \leq \underbrace{\frac{\abs{\E{G(X(T))} - \E{G\left(\Bar{X}^{P|N_1|N_2}(T)\right)}}}{\abs{\E{G(X(T))}}}}_{=\epsilon_b, \text{ Relative bias}} \nonumber \\
    & \qquad + \underbrace{\frac{\abs{\E{G\left(\Bar{X}^{P|N_1|N_2}(T)\right)} - \mathcal{A}_{\mathrm{MC}}}}{\abs{\E{G(X(T))}}}}_{=\epsilon_s, \text{ Relative statistical error}} \cdot
\end{align}

Although $\mathcal{A}_{\mathrm{MC}}$ should satisfy a given $\tol_\mathrm{r}$ in the sense of \eqref{eqn:mc_objective}, we impose more restrictive conditions, which can be expressed as follows: 
\begin{align}
\label{eqn:dlmc_bias_constraint}
    &\text{Bias Constraint: } \epsilon_b \leq \theta \tol_\mathrm{r} , \\
    \label{eqn:dlmc_stat_constraint}
    &\text{Statistical Constraint: } \prob{\epsilon_s \leq (1-\theta)\tol_\mathrm{r}} > 1 - \alpha ,
\end{align}
for a given tolerance splitting parameter $\theta \in (0,1)$ and confidence level determined by $\alpha$. Let us first analyze the estimator bias, which can be split into two terms:

\begin{align}
    \epsilon_b &= \frac{\abs{\E{G(X(T))} - \E{G\left(\Bar{X}^{P|N_1|N_2}(T)\right)}}}{\abs{\E{G(X(T))}}} \nonumber \\
    \label{eqn:dlmc_bias_split}
    & \leq \underbrace{\frac{\abs{\E{G(X(T))} - \E{G\left(\Bar{X}^{P}(T)\right)}}}{\abs{\E{G(X(T))}}}}_{\text{Relative decoupling error}}
    + \underbrace{\frac{\abs{\E{G\left(\Bar{X}^{P}(T)\right)} - \E{G\left(\Bar{X}^{P|N_1|N_2}(T)\right)}}}{\abs{\E{G(X(T))}}}}_{\text{Relative time discretization error}} , 
\end{align}
and make the following assumptions.
\begin{assump}[Decoupling Error]
	\hspace{1mm} There exists a constant $C_p > 0$ independent of $P$, such that
    \label{ass:decoupling_err}
    \begin{equation*}
        \abs{\E{G(X(T))} - \E{G\left(\Bar{X}^{P}(T)\right)}} \leq C_p P^{-1} \cdot
    \end{equation*}
\end{assump}

\begin{assump}[Time Discretization Error]
	\hspace{1mm} There exist constants $C_{n_1} > 0$ and $C_{n_2} > 0$ independent of $N_1,N_2$, such that
    \label{ass:time_err}
    \begin{equation*}
        \abs{\E{G\left(\Bar{X}^{P}(T)\right)} - \E{G\left(\Bar{X}^{P|N_1|N_2}(T)\right)}} \leq C_{n_1} N_1^{-1} + C_{n_2} N_2^{-1} \cdot
    \end{equation*}
\end{assump}

% In Assumption \ref{ass:decoupling_err}, the constant $C_p$ depends only on $T$ and the smoothness and boundedness of $b(\cdot,\cdot),\sigma(\cdot,\cdot)$ and $G(\cdot)$. In Assumption \ref{ass:time_err}, the constants $C_{n_1},C_{n_2}$ are independent of the number of particles $P$ and number of time steps $N_1,N_2$ and only depend on $T$ and the smoothness and boundedness of $b(\cdot,\cdot),\sigma(\cdot,\cdot)$ and $G(\cdot)$ and their derivatives. 
Assumption \ref{ass:decoupling_err} is motivated by the weak convergence with respect to the number of particles (see \citep{kolokoltsov2019mean}). Assumption \ref{ass:time_err} is motivated by the weak convergence of the Euler--Maruyama scheme for standard SDEs \citep{sde_numerics}. In Section~\ref{sec:results}, we numerically verify these assumptions for the Kuramoto model \eqref{eqn:kuramoto_model}. Note that the constants $C_p,C_{n_1},C_{n_2}$ depend on final time $T$ and the regularity and boundedness of $b(\cdot,\cdot),\sigma(\cdot,\cdot)$ and $G(\cdot)$ and their derivatives. By substituting Assumptions \ref{ass:decoupling_err} and \ref{ass:time_err} into \eqref{eqn:dlmc_bias_split}, the bias bound can be expressed as 
\begin{equation}
\label{eqn:dlmc_bias_split_v2}
    \epsilon_b \abs{\E{G(X(T))}} \leq \frac{C_p}{P} + \frac{C_{n_1}}{N_1} + \frac{C_{n_2}}{N_2} \cdot
\end{equation}

Consider the statistical error constraint \eqref{eqn:dlmc_stat_constraint}; we can approximate the statistical error $\epsilon_s$, by using the Central Limit Theorem as 
\begin{equation}
\label{eqn:dlmc_clt}
    \epsilon_s \approx C_\alpha \sqrt{\Var{\mathcal{A}_{\mathrm{MC}}}} \leq (1-\theta) \tol_\mathrm{r} \abs{\E{G(X(T))}} ,
\end{equation}
where $C_\alpha$ is the $\left(1 - \frac{\alpha}{2}\right)$-quantile for the standard normal distribution. Hence, \eqref{eqn:dlmc_clt} can be expressed as 
\begin{equation}
    \label{eqn:dlmc_variance_constraint}
    \Var{\mathcal{A}_{\mathrm{MC}}} \leq \left(\frac{(1-\theta) \tol_\mathrm{r} \abs{\E{G(X(T))}}}{C_\alpha}\right)^2 
\end{equation}
leading to a constraint on the estimator variance. For notational convenience, let $Y^{P|N_1|N_2} = G\left(\Bar{X}_\zeta^{P|N_1|N_2}(T) \right) \mathbb{L}^{P|N_1|N_2}$ and $Y^{P|N_1|N_2}_{m_1,m_2} \equiv G\left(\Bar{X}_\zeta^{P|N_1|N_2}(T)\right) \mathbb{L}^{P|N_1|N_2} \left( \omega_{1:P}^{(m_1)}, \tilde{\omega}^{(m_2)} \right)$ be random samples of $Y^{P|N_1|N_2}$. Thus, the DLMC estimator variance can be expressed as 
%Given the double-loop Algorithm \ref{alg:dlmc_general}, we can rewrite our DLMC estimator as follows

%\begin{equation}
%\label{eqn:dlmc_est_v2}
 %   \mathcal{A}_{\mathrm{MC}} = \frac{1}{M_1} \sum_{m_1=1}^{M_1} \frac{1}{M_2} \sum_{m_2=1}^{M_2} \underbrace{G\left(\left(\Bar{X}_\zeta^{P|N_1|N_2}(T) \right)^{(m_1,m_2)} \right) \left(\mathbb{L}^{P|N_1|N_2}\right)^{(m_1,m_2)} }_{=Y^{P|N_1|N_2}_{m_1,m_2}} \cdot
%\end{equation}

%We know that $\mathcal{A}_{\mathrm{MC}}$ is an unbiased estimator of $\E{G\left(\Bar{X}^{P|N_1|N_2}(T)\right)}$ with respect to the product space defined in the previous section. That is, 

%\begin{equation}
 %   \E{\mathcal{A}_{\mathrm{MC}}} = \E{\underbrace{G\left(\Bar{X}_\zeta^{P|N_1|N_2}(T)  \right) \mathbb{L}^{P|N_1|N_2}}_{=Y^{P|N_1|N_2}}} \cdot
%\end{equation}

\begin{align}
\label{eqn:dlmc_var_est}
    \Var{\mathcal{A}_{\mathrm{MC}}} &= \Var{\frac{1}{M_1} \sum_{m_1=1}^{M_1} \frac{1}{M_2} \sum_{m_2=1}^{M_2} Y^{P|N_1|N_2}_{m_1,m_2}}    
    = \frac{1}{M_1} \Var{\frac{1}{M_2} \sum_{m_2=1}^{M_2} Y^{P|N_1|N_2}_{1,m_2}} \cdot
\end{align}

Then, by using the law of total variance, we obtain
\begin{align}
    \Var{\mathcal{A}_{\mathrm{MC}}} &= \frac{1}{M_1} \Var{\E{\frac{1}{M_2} \sum_{j=1}^{M_2}Y^{P|N_1|N_2}_{1,j} \mid \mu^{P|N_1} }} + \frac{1}{M_1} \E{\Var{\frac{1}{M_2} \sum_{j=1}^{M_2}Y^{P|N_1|N_2}_{1,j} \mid \mu^{P|N_1} }} \nonumber \\
    \label{eqn:dlmc_total_var}
    &= \frac{1}{M_1} \Var{\E{Y^{P|N_1|N_2} \mid \mu^{P|N_1}}} + \frac{1}{M_1M_2} \E{ \Var{Y^{P|N_1|N_2} \mid \mu^{P|N_1} }},
\end{align}
where $\{Y^{P|N_1|N_2}_{1,j}\}_{j=1}^{M_2}$, represents independent samples conditioned on $\mu^{P|N_1}$. Furthermore, we make the following assumptions.

\begin{assump}
	\hspace{1mm} There exists a constant, $C_1 > 0$, which is independent of $N_1,N_2$, and $P$ such that
    \label{ass:var_1}
    \begin{equation*}
        V^{P|N_1|N_2}_1 = \Var{\E{Y^{P|N_1|N_2} \mid \mu^{P|N_1}}} \leq C_1 P^{-1} \cdot
    \end{equation*}
\end{assump}

\begin{assump}
	\hspace{1mm} There exists a constant, $C_2 > 0$, which is independent of $N_1,N_2$, and $P$ such that
    \label{ass:var_2}
    \begin{equation*}
        V^{P|N_1|N_2}_2 = \E{ \Var{Y^{P|N_1|N_2} \mid \mu^{P|N_1} }} \leq C_2 < \infty \cdot
    \end{equation*}
\end{assump}

Assumption \ref{ass:var_1} is motivated by the convergence of $\mu^{P|N_1}$ to the deterministic mean-field law as $P\longrightarrow\infty$. As the outer variance is related to the randomness in the empirical law $\mu^{P|N_1}$, $V^{P|N_1|N_2}_1$ vanishes with the increase in the number of particles. Assumption~\ref{ass:var_2} is motivated by the bounded conditional variance with respect to randomness driving the dynamics~\eqref{eqn:decoupled_mvsde}. These assumptions are numerically verified for \eqref{eqn:kuramoto_model} in Section~\ref{sec:results}. By substituting Assumptions \ref{ass:var_1} and \ref{ass:var_2} in \eqref{eqn:dlmc_total_var}, constraint \eqref{eqn:dlmc_variance_constraint} can be expressed as
\begin{equation}
\label{eqn:dlmc_var_constraint_v2}
    \frac{C_1}{P M_1} + \frac{C_2}{M_1 M_2} \leq \left(\frac{(1-\theta) \tol_\mathrm{r} \abs{\E{G(X(T))}}}{C_\alpha}\right)^2 \cdot
\end{equation}

\subsubsection{Work Analysis}

This section analyzes the cost to run DLMC Algorithm \ref{alg:dlmc_general}.

\begin{enumerate}
    \item The computational cost required to generate one realization of empirical law using the Euler--Maruyama time-stepping scheme is $N_1 \times P^2$, where $N_1$ denotes the number of time steps.
    \item To numerically solve \eqref{eqn:dmvsde_hjb_form3}, the computational cost is denoted by $\mathcal{W}_{\mathrm{PDE}}$. For standard numerical solvers, $\mathcal{W}_{\mathrm{PDE}} = \order{h^{-d\Gamma}}$, where $\Gamma \geq 1$ is related to the quality of the solver and $h$ is related to the size of mesh grids in time/space.
    \item The computational cost required to generate one realization of decoupled process \eqref{eqn:dmvsde_euler_is_form2} using the Euler-Maruyama time-stepping scheme is $P \times N_2$, where $N_2$ denotes the number of  time steps.
\end{enumerate}

The above-mentioned steps assume a naive method (with computational cost $\order{P}$) to compute the empirical mean for drift and diffusion coefficients in \eqref{eqn:strong_approx_mvsde} and \eqref{eqn:dmvsde_euler_is_form2}. Hence, the total computational cost of Algorithm~\ref{alg:dlmc_general} can be expressed as
\begin{equation}
    \label{eqn:dlmc_cost_v1}
    \mathcal{W} = M_1 \left\{P^2 N_1 + \mathcal{W}_{\mathrm{PDE}} + M_2 \left\{P N_2\right\}\right\} \cdot
\end{equation}

Numerically solving \eqref{eqn:dmvsde_hjb_form3} $M_1$ times is unfeasible. Our revised Algorithm~\ref{alg:dlmc_revised} addresses this issue by solving \eqref{eqn:dmvsde_hjb_form3} offline in a single instance, by using the drift and diffusion coefficients obtained from one realization of the empirical law, using large $\Bar{P},\Bar{N}$ values. This implies that instead of a stochastic control, a deterministic control is obtained that is independent of the different stochastic $P$-particle system realizations in Algorithm~\ref{alg:dlmc_general}. This choice was motivated by the fact that  $\mu^{\Bar{P}|\Bar{N}}$ in the drift and diffusion coefficients converges to the deterministic mean-field law, $\mu_t$, as $\Bar{P},\Bar{N}$ tend to infinity. The online computational cost of running  Algorithm~\ref{alg:dlmc_revised} can be expressed as
\begin{equation}
    \label{eqn:dlmc_cost_v2}
    \mathcal{W} = M_1 \left\{P^2 N_1 + M_2 \left\{P N_2\right\}\right\} , 
\end{equation}
where we exclude the offline cost to numerically solve \eqref{eqn:dmvsde_hjb_form3}. Under the constraints of  \eqref{eqn:dlmc_bias_constraint} and \eqref{eqn:dlmc_var_constraint_v2}, we must determine optimal parameters $P,N_1,N_2,M_1$, and $M_2$ to ensure minimal computational cost \eqref{eqn:dlmc_cost_v2}. This can be presented as the following optimization problem:
\begin{empheq}[left=\empheqlbrace, right =,]{equation}
    \label{eqn:dlmc_min_problem}
    \begin{alignedat}{2}
    \min_{\{P,N_1,N_2,M_1,M_2\}} &\quad \mathcal{W} = M_1 N_1 P^2 + M_1 M_2 N_2 P\\ 
    \text{s.t. } &\frac{C_p}{P} + \frac{C_{n_1}}{N_1} + \frac{C_{n_2}}{N_2} \approx \theta \tol_\mathrm{r} \abs{\E{G(X(T))}} ,\\ 
    & C_{\alpha}^2\left(\frac{C_1}{P M_1} + \frac{C_2}{M_1 M_2}\right) \approx (1-\theta)^2 \tol^2_\mathrm{r} \abs{\E{G(X(T))}}^2 , 
    \end{alignedat}
\end{empheq}
with the solution formulated in Theorem \ref{th:dlmc_optimal_work}.

\begin{algorithm}
    \caption{Revised general DLMC algorithm for decoupled MV-SDE}
\label{alg:dlmc_revised}
    \SetAlgoLined
    \textbf{Offline: } \\
    Generate realisation of law $\mu^{\Bar{P}|\Bar{N}}$ with $\Bar{P}$-particle system and $\Bar{N}$ time steps using \eqref{eqn:dmvsde_discrete_law} with some large $\Bar{P},\Bar{N}$; \\
    Given $\mu^{\Bar{P}|\Bar{N}}$, solve KBE \eqref{eqn:dmvsde_hjb_form3} to obtain control $\zeta(\cdot,\cdot)$; \\
    \textbf{Inputs: } $P,N_1,N_2,M_1,M_2,\zeta(\cdot,\cdot)$; \\
    \For{$m_1=1,\ldots,M_1$}{
    Generate realization of law $\mu^{P|N_1} \left( \omega_{1:P}^{(m_1)} \right)$ with $P$-particle system and $N_1$ time steps using \eqref{eqn:dmvsde_discrete_law}; \\
    %Compute time extension of law $\left(\mu^{P|N_1}\right)^{(m_1)}$ using \eqref{sps_euler_extension} and \eqref{eqn:law_euler_extension}; \\
    \For{$m_2=1,\ldots,M_2$}{
    Generate realization of random variables $\tilde{\omega}^{(m_2)}$; \\
    Given $\mu^{P|N_1} \left( \omega_{1:P}^{(m_1)} \right)$ and $\zeta(\cdot,\cdot)$, solve decoupled MV-SDE with $N_2$ time steps using \eqref{eqn:dmvsde_euler_is_form2};\\
    Compute $G\left(\Bar{X}_\zeta^{P|N_1|N_2}(T)\right) \left( \omega_{1:P}^{(m_1)}, \tilde{\omega}^{(m_2)} \right)$; \\
    Compute $\mathbb{L}^{P|N_1|N_2} \left( \omega_{1:P}^{(m_1)}, \tilde{\omega}^{(m_2)} \right)$ using \eqref{eqn:dlmc_llhood_factor};
    }
    Approximate $\E{G\left(\Bar{X}^{P|N_1|N_2}(T)\right) \mid \mu^{P|N_1} \left( \omega_{1:P}^{(m_1)} \right) }$ by $\frac{1}{M_2} \sum_{m_2=1}^{M_2} G\left(\Bar{X}_\zeta^{P|N_1|N_2}(T)\right) \mathbb{L}^{P|N_1|N_2} \left( \omega_{1:P}^{(m_1)}, \tilde{\omega}^{(m_2)} \right)$; \\
    }
    Approximate $\E{G\left(\Bar{X}^{P|N_1|N_2}(T)\right)}$ by $\frac{1}{M_1} \sum_{m_1=1}^{M_1} \frac{1}{M_2} \sum_{m_2=1}^{M_2} G\left(\Bar{X}_\zeta^{P|N_1|N_2}(T)\right) \mathbb{L}^{P|N_1|N_2} \left( \omega_{1:P}^{(m_1)}, \tilde{\omega}^{(m_2)} \right)$ ;
\end{algorithm}

\begin{thm}[Optimal DLMC Complexity]
\label{th:dlmc_optimal_work}
\hspace{1mm} Consider the DLMC estimator in \eqref{eqn:dmvsde_dlmc_est}, obtained from Algorithm~\ref{alg:dlmc_revised}. For any $\tol_\mathrm{r} > 0$, there exist optimal parameters $\{P,N_1,N_2,M_1,M_2\}$ such that \eqref{eqn:dlmc_bias_constraint} and \eqref{eqn:dlmc_var_constraint_v2} hold. The optimal computational work is given as
    \begin{equation}
    \label{eqn:dlmc_optimal_work}
        \mathcal{W} = M_1 N_1 P^2 + M_1 M_2 N_2 P = \order{\tol_\mathrm{r}^{-4}} \cdot
    \end{equation}
\end{thm}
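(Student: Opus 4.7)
The plan is to solve \eqref{eqn:dlmc_min_problem} by (i) exhibiting a parameter allocation that achieves $\mathcal{W} = \order{\tol^{-4}}$, and (ii) establishing a matching lower bound $\mathcal{W} = \Omega(\tol^{-4})$ directly from the two constraints, thereby confirming optimality without requiring a full KKT calculation. Both steps exploit the fact that the bias constraint \eqref{eqn:dlmc_bias_constraint} is an additive sum of three monotone terms in distinct variables $P, N_1, N_2$, and the variance constraint \eqref{eqn:dlmc_var_constraint_v2} is an additive sum of two terms in $(P, M_1)$ and $(M_1, M_2)$, so balancing each summand individually pins down the asymptotic order of each parameter.

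For the upper bound, I would first balance the three bias terms $\tfrac{C_p}{P}$, $\tfrac{C_{n_1}}{N_1}$, $\tfrac{C_{n_2}}{N_2}$ to each equal $\tfrac{\theta\tol}{3}$, giving $P = \Theta(\tol^{-1})$, $N_1 = \Theta(\tol^{-1})$, $N_2 = \Theta(\tol^{-1})$. Substituting $P = \Theta(\tol^{-1})$ into the variance constraint turns it into a two-term sum of the form $\tfrac{\widetilde C_1\,\tol}{M_1} + \tfrac{C_2 C_\alpha^2}{M_1 M_2} \lesssim (1-\theta)^2\tol^2$; balancing the two terms to the budget $\tol^2$ yields $M_1 = \Theta(\tol^{-1})$ and $M_1 M_2 = \Theta(\tol^{-2})$, hence $M_2 = \Theta(\tol^{-1})$. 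Plugging all five scalings into the work functional gives
\[
\mathcal{W} = M_1 N_1 P^2 + M_1 M_2 N_2 P = \Theta(\tol^{-4}) + \Theta(\tol^{-4}) = \order{\tol^{-4}}.
\]

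For the matching lower bound I would use only the necessary conditions implied by the constraints: since each summand of the bias is nonnegative and the sum is bounded by $\theta\tol$, one has $P \geq C_p/(\theta\tol)$, $N_1 \geq C_{n_1}/(\theta\tol)$, $N_2 \geq C_{n_2}/(\theta\tol)$; similarly the second term of the variance constraint forces $M_1 M_2 \geq C_2 C_\alpha^2/((1-\theta)^2\tol^2)$. Hence the second monomial of the work alone satisfies
\[
M_1 M_2 N_2 P \;\geq\; \Omega(\tol^{-2})\cdot\Omega(\tol^{-1})\cdot\Omega(\tol^{-1}) \;=\; \Omega(\tol^{-4}),
\]
so any feasible choice satisfies $\mathcal{W} \geq \Omega(\tol^{-4})$, matching the upper bound. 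The main obstacle I foresee is not algebraic but bookkeeping: $P$ appears in both terms of the objective and in both constraints, so a naive Lagrangian approach produces a coupled system of five KKT equations. The lower-bound argument above sidesteps this entirely by noting that only one of the two monomials of $\mathcal{W}$ is needed to witness the $\Omega(\tol^{-4})$ floor, which is why the theorem reduces to elementary manipulation of the constraints rather than a full optimization. A minor additional technicality is that the optimal parameters obtained are real-valued and must be rounded up to integers, but this perturbs only the implicit constants and not the $\order{\tol^{-4}}$ order.
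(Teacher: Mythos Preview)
Your proof is correct and takes a genuinely different route from the paper's. The paper proceeds by writing the full Lagrangian for \eqref{eqn:dlmc_min_problem}, differentiating with respect to all five parameters and the two multipliers, and solving the resulting coupled KKT system to obtain closed-form expressions for $P,N_1,N_2,M_1,M_2$ (each explicitly proportional to $\tol^{-1}$, with constants depending on $C_p,C_{n_1},C_{n_2},C_1,C_2$), then substituting back into $\mathcal{W}$. Your argument instead (i) exhibits a feasible balanced allocation achieving $\mathcal{W}=\order{\tol^{-4}}$, and (ii) extracts the matching $\Omega(\tol^{-4})$ lower bound directly from the nonnegativity of the individual summands in the two constraints, using only the second monomial $M_1M_2N_2P$ of the work. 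The paper's route yields explicit optimal constants, which feed into the adaptive algorithm design, but it stops at the first-order stationarity conditions and never certifies that the KKT point is a minimizer; your elementary lower bound actually closes that gap and confirms optimality of the $\tol^{-4}$ order without any second-order or convexity check. The trade-off is that your argument does not recover the precise optimal constants $\alpha,\beta,\gamma$ that the paper derives, only the asymptotic rate.
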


\begin{proof}
   Refer to Appendix \ref{appendix:a3}.
\end{proof}

\begin{rem}[Optimal MC Complexity for Particle Systems]
\hspace{1mm} The introduction of DLMC for this problem crucially guarantees $\order{\tol_\mathrm{r}^{-4}}$ complexity for the proposed estimator. From \eqref{eqn:dlmc_min_problem} and Theorem \ref{th:dlmc_optimal_work}, the use of a single realization of the particle system ($M_1=1$) results in an increase in complexity to $\order{\tol_\mathrm{r}^{-5}}$. Moreover, in comparison to previous MC estimators~\citep{mlmc_mvsde}, the proposed IS scheme for the DLMC estimator also considerably reduces the associated constant for rare-event probabilities. This is validated in Section \ref{sec:results}.
\end{rem}

%The introduction of the double-loop for this problem reduces the complexity of the IS estimator in \citep{is_mvsde} to achieve a given relative tolerance $\tol_{\mathrm{r}}$ by one order. In fact, our DLMC estimator achieves the same complexity as that of the MC estimator introduced in \citep{mlmc_mvsde} for non-rare event observables, in addition to reducing the constant to estimate rare event probabilities feasibly. 

 %Subsequently, we can formulate an optimal, direct DLMC algorithm to estimate our quantity of interest. 

%\begin{algorithm}[H]
%\label{alg:direct_dlmc}
 %   \SetAlgoLined
  %  Run multiple pilots to estimate constants $C_p,C_{n_1},C_{n_2},C_1,C_2$; \\
%    Compute optimal parameters $P,N_1,N_2, M_1, M_2$ using \eqref{eqn:dlmc_optimal_params}; \\
%    Run Algorithm \eqref{alg:dlmc_revised} using optimal $P,N_1,N_2, M_1, M_2$ as input;
%    \caption{Direct DLMC Algorithm for decoupled MV-SDE}
%\end{algorithm}

\subsubsection{Adaptive DLMC Algorithm}

%If constants $C_p,C_{n_1},C_{n_2},C_1,C_2$ are known, we can directly compute the optimal parameters. However, it is generally computationally intensive to run the various pilots to estimate the constants, and to estimate these constants reliably for rare event observables. 
We formulate a DLMC algorithm that adaptively selects optimal parameters $P,N_1,N_2, M_1$, and $M_2$. For computational convenience, we restrict our choices for $P,N_1$, and $N_2$, by implementing the following hierarchies:
\begin{itemize}
    \item $P_\ell = P_0 \cross \tau^\ell \quad,\ell=0,\ldots,L$
    \item $(N_1)_\ell = (N_2)_\ell = N_\ell = N_0 \cross \tau^{\ell} \quad,\ell=0,\ldots,L$
\end{itemize}

Herein, we choose $\tau=2$. The choice of $N_1=N_2$ is nearly optimal as the optimal values of $N_1$ and $N_2$ are both $\order{\tol_\mathrm{r}^{-1}}$ (Appendix~\ref{appendix:a3}). To build an adaptive algorithm, we must estimate the bias and variances $V_{1,\ell}$ and $V_{2,\ell}$ for some level $\ell$ in a cheap and robust manner. To satisfy the bias constraint, we must select a level $\ell = L$ that satisfies \eqref{eqn:dlmc_bias_constraint}. For convenience of the following analysis, we denote $G = G(X(T))$ and its discretization at level $\ell$ as $G_\ell = G\left(\Bar{X}_\zeta^{P_\ell|N_\ell|N_\ell}(T)\right)$. The likelihood factor, $\mathbb{L}_\ell = \mathbb{L}^{P_\ell|N_\ell|N_\ell}$, at level $\ell$ is computed using \eqref{eqn:dlmc_llhood_factor}. Then, the relative bias for level $\ell$ can be expressed as 

\begin{align}
\label{adlmc_bias_level}
    \epsilon_b &= \frac{\abs{\E{G - G_\ell}}}{\abs{\E{G}}} \leq \frac{1}{\abs{\E{G}}} \left( \frac{C_p}{P_\ell} + \frac{C_n}{N_\ell} \right) \cdot
\end{align}

We use the Richardson extrapolation technique~\citep{mlmc_richardson_extra} to estimate bias for $\tau=2$ at level $\ell$ as

\begin{equation}
\label{eqn:adlmc_bias_richardson}
    \abs{\E{G - G_\ell}} \approx 2 \abs{\E{G_{\ell+1} - G_\ell}} \cdot
\end{equation}

We use a robust DLMC estimate of $\E{\Delta G_{\ell+1}} = \E{G_{\ell+1} - G_\ell}$ by using $\bar{M}_1,\bar{M}_2$ samples according to  Algorithm~\ref{alg:adlmc_est_bias}, which incorporates an antithetic sampler~\citep{mlmc_mvsde} to estimate the bias using sufficiently correlated samples of $G_{\ell+1}$ and $G_\ell$. More details regarding the antithetic sampler can be found in Algorithm \ref{alg:adlmc_est_bias}.

\begin{algorithm}
    \caption{Estimating $\E{\Delta G_\ell}$ with an antithetic sampler}
\label{alg:adlmc_est_bias}
    \SetAlgoLined
    %\textbf{Offline: } \\
    %Generate realisation of law $\mu^{\Bar{P}|\Bar{N}}$ with $\Bar{P}$-particle system and $\Bar{N}$ time steps using \eqref{sps_euler_scheme} and \eqref{eqn:dmvsde_discrete_law} with arbitrarily large $\Bar{P},\Bar{N}$; \\
    %Given $\mu^{\Bar{P}|\Bar{N}}$, solve KBE \eqref{eqn:dmvsde_hjb_form3} to obtain control $\zeta(t,x)$; \\
    \textbf{Inputs: } $\ell,M_1,M_2,\zeta(\cdot,\cdot)$; \\
    \For{$m_1=1,\ldots,M_1$}{
    Generate realization of random variables $\omega_{1:P_\ell}^{(m_1)}$; \\
    Generate $\mu^{P_{\ell}|N_{\ell}} \left( \omega_{1:P_\ell}^{(m_1)} \right)$ realization with $P_{\ell}$-particle system and $N_{\ell}$ time steps using \eqref{eqn:dmvsde_discrete_law}; \\
    From $\mu^{P_{\ell}|N_{\ell}} \left( \omega_{1:P_\ell}^{(m_1)} \right)$, generate $\mu^{P_{\ell}-1|N_{\ell}-1} \left( \omega_{1:\frac{P_\ell}{2}}^{(m_1)} \right)$ and $\mu^{P_{\ell}-1|N_{\ell}-1} \left( \omega_{\frac{P_\ell}{2}:P_\ell}^{(m_1)} \right)$; \\
   \For{$m_2=1,\ldots,M_2$}{
   Generate realization of random variables $\tilde{\omega}^{(m_2)}$; \\
    Given $\mu^{P_{\ell}|N_{\ell}} \left( \omega_{1:P_\ell}^{(m_1)} \right)$ and $\zeta(\cdot,\cdot)$, solve decoupled MV-SDE with $N_{\ell}$ time steps using \eqref{eqn:dmvsde_euler_is_form2};\\
    Compute $G_{\ell} \mathbb{L}_{\ell} \left( \omega_{1:P_\ell}^{(m_1)}, \tilde{\omega}^{(m_2)} \right)$; \\
    Given $\mu^{P_{\ell}-1|N_{\ell}-1} \left( \omega_{1:\frac{P_\ell}{2}}^{(m_1)} \right)$ and $\zeta(\cdot,\cdot)$, solve decoupled MV-SDE with $N_{\ell}-1$ time steps using \eqref{eqn:dmvsde_euler_is_form2};\\
    Compute $G_{\ell} \mathbb{L}_{\ell} \left( \omega_{1:\frac{P_\ell}{2}}^{(m_1)}, \tilde{\omega}^{(m_2)} \right)$; \\
    Given $\mu^{P_{\ell}-1|N_{\ell}-1} \left( \omega_{\frac{P_\ell}{2}:P_\ell}^{(m_1)} \right)$ and $\zeta(\cdot,\cdot)$, solve decoupled MV-SDE with $N_{\ell}-1$ time steps using \eqref{eqn:dmvsde_euler_is_form2};\\
    Compute $G_{\ell} \mathbb{L}_{\ell} \left( \omega_{\frac{P_\ell}{2}:P_\ell}^{(m_1)}, \tilde{\omega}^{(m_2)} \right)$; \\
    }
    }
    Approximate $\E{G_{\ell} -G_{\ell-1}}$ by $\frac{1}{M_1} \sum_{m_1=1}^{M_1} \frac{1}{M_2} \sum_{m_2=1}^{M_2} \left(G_{\ell} \mathbb{L}_{\ell}\right)^{(m_1,m_2)} - \frac{G_{\ell} \mathbb{L}_{\ell} \left( \omega_{1:\frac{P_\ell}{2}}^{(m_1)}, \tilde{\omega}^{(m_2)} \right) +G_{\ell} \mathbb{L}_{\ell} \left( \omega_{\frac{P_\ell}{2}:P_\ell}^{(m_1)}, \tilde{\omega}^{(m_2)} \right)}{2}$; \\
\end{algorithm}

Given that some level $\ell = L$ satisfies \eqref{eqn:dlmc_bias_constraint}, we must find optimal parameters $M_1$ and $M_2$ that satisfy \eqref{eqn:dlmc_variance_constraint}. This can be presented as the following  optimization problem:
\begin{empheq}[left=\empheqlbrace, right =,]{equation}
    \label{eqn:adlmc_min_problem}
    \begin{alignedat}{2}
    \min_{\{M_1,M_2\}} \quad & \mathcal{W} = M_1 N_L P_L^2 + M_1 M_2 N_L P_L\\ 
    & C_{\alpha}^2\left(\frac{V_{1,L}}{M_1} + \frac{V_{2,L}}{M_1 M_2}\right) \approx(1-\theta)^2 \tol_\mathrm{r}^2 \abs{\E{G(X(T))}}^2  , 
    \end{alignedat}
\end{empheq} 
where $V_{1,L} = V_1^{P_L|N_L|N_L}$ and $V_{2,L} = V_2^{P_L|N_L|N_L}$. By  solving \eqref{eqn:adlmc_min_problem}, we get
\begin{align}
    \label{eqn:adlmc_optimal_params}
    M_1 &= \left(V_{1,L} + \sqrt{\frac{V_{1,L} V_{2,L}}{P_L}}\right) \frac{C_\alpha^2}{(1-\theta)^2 \tol_\mathrm{r}^2 \abs{\E{G(X(T))}}^2}, \quad
    M_2 = \sqrt{\frac{V_{2,L} P_L}{V_{1,L}}} \cdot
\end{align}

In principle, $M_1$ and $M_2$ obtained in \eqref{eqn:adlmc_optimal_params} are real-valued. In practice, however, we use the next highest integer values for $M_1$ and $M_2$. From \eqref{eqn:adlmc_optimal_params}, one observes that we must estimate $V_{1,L}$ and $V_{2,L}$ to obtain a DLMC estimator that satisfies \eqref{eqn:dlmc_variance_constraint}. The proposed adaptive algorithm uses a heuristic DLMC estimator for these variances, as stated in Algorithm~\ref{alg:adlmc_est_constants} in Appendix~\ref{app:est_constants}, with $\bar{M}_1$ and $\bar{M}_2$ samples.

%For rare-event probabilities, satisfying the relative tolerance $\tol_{\mathrm{r}}$ is more relevant than satisfying the absolute tolerance $\tol$, where $\tol = \E{G(X(T))} \times \tol_{\mathrm{r}}$. 
The proposed adaptive algorithm updates a heuristic estimate for $\E{G(X(T))}$ (the quantity of interest) to check the estimator's relative error at each step. To initialize the adaptive algorithm, we produce an initial rough DLMC estimate by using IS with $\tilde{M}_1$ and $\tilde{M}_2$ samples. With this, we have all the components required to formulate an adaptive DLMC algorithm, as shown in Algorithm~\ref{alg:adlmc}. 

%\begin{equation}
%\label{eqn:defn_rel_tol}
 %   \tol = \E{G(X(T))} \times \tol_{\mathrm{r}} \cdot
%\end{equation}

  %Next, we need to solve the KBE \eqref{eqn:dmvsde_hjb_form3} numerically to obtain our IS control $\zeta(t,x)$. In this work, we consider the one-dimensional ($d=1$) Kuramoto model example \eqref{eqn:kuramoto_model}. We choose to solve the one-dimensional KBE \eqref{eqn:dmvsde_hjb_form3} numerically using the finite difference scheme for space discretization and the Crank-Nicholson method as a time-stepping scheme. We then perform linear interpolation in both space and time to extend the solution from the grid points to the entire domain. Further details on the PDE solver can be found in Section \ref{appendix:b1}. We now have all the components required for formulating an adaptive DLMC algorithm.

\begin{algorithm}
    \caption{Adaptive DLMC algorithm for decoupled MV-SDE}
\label{alg:adlmc}
    \SetAlgoLined
    \textbf{Offline: } \\
    Generate $\mu^{\Bar{P}|\Bar{N}}$ realisation with $\Bar{P}$-particle system and $\Bar{N}$ time steps \eqref{eqn:dmvsde_discrete_law} with arbitrarily large $\Bar{P},\Bar{N}$; \\
    Given $\mu^{\Bar{P}|\Bar{N}}$, solve KBE \eqref{eqn:dmvsde_hjb_form3} to obtain control $\zeta(\cdot,\cdot)$; \\
    \textbf{Input: } $P_0,N_0,\tol_{\mathrm{r}},\zeta(\cdot,\cdot)$; \\
     $\ell=0$; \\
     Estimate $\hat{\alpha} = \E{G_0}$ using $P_0,N_0,\Tilde{M}_1,\Tilde{M}_2,\zeta(\cdot,\cdot)$ in \textbf{Algorithm \ref{alg:dlmc_revised}};\\
    \While{$\mathrm{Bias} > \theta \tol_{\mathrm{r}} \hat{\alpha}$}{
    $P_\ell=P_0\cross2^\ell,\quad N_\ell = N_0 \cross 2^{\ell}$; \\
    Estimate $V_{1,\ell}$ and $V_{2,\ell}$ using $P_\ell,N_\ell, \Bar{M}_1, \Bar{M}_2, \zeta(\cdot,\cdot)$ in \textbf{Algorithm \ref{alg:adlmc_est_constants}}; \\
    Compute optimal $M_1,M_2$ with estimated $V_{1,\ell},V_{2,\ell}$ using \eqref{eqn:adlmc_optimal_params}; \\
    Estimate Bias = $\frac{\abs{2 \E{\Delta G_{\ell+1}}}}{\hat{\alpha}}$ with $P_\ell,N_\ell,\hat{M}_1,\hat{M}_2,\zeta(\cdot,\cdot)$ in \textbf{Algorithm \ref{alg:adlmc_est_bias}}; \\
    Update $\hat{\alpha} = \E{G_\ell}$ using $P_\ell,N_\ell,M_1,M_2,\zeta(\cdot,\cdot)$ in \textbf{Algorithm \ref{alg:dlmc_revised}}; \\
    $\ell \longleftarrow \ell+1$;
    }
    $\mathcal{A}_{\mathrm{MC}} = \hat{\alpha}$.
\end{algorithm}

\section{Numerical Results}
\label{sec:results}

This section provides numerical evidence for the assumptions and the computational complexity derived in Section~\ref{sec:dlmc}. The reported results focus on the Kuramoto model~\eqref{eqn:kuramoto_model} with $\sigma=0.4$, $T=1$, $(x_0)_p \sim \mathcal{N}(0,0.2)$, and $\nu_p \sim \mathcal{U}(-0.2,0.2)$ for all $p = 1,\ldots,P$. We implemented our DLMC algorithm on both nonrare- and rare-event observables. We demonstrate the effectiveness of our method over naive MC. 

\subsection{Objective function $G(x) = \cos{x}$}

We implemented the proposed DLMC algorithm for the nonrare observable $G(x) = \cos{x}$. Due to the fact that this is not a rare-event observable, IS is not required in the algorithms, i.e., $\zeta(t,x) = 0,\quad \forall (t,x) \in [0,T] \cross \mathbb{R}^d$. First, we verify Assumption~\ref{ass:decoupling_err}. As $\E{G(X(T))}$ is unknown, we used Richardson extrapolation to obtain the following estimate, which can be numerically computed.
%We can rewrite the decoupling error estimate for $P$ and $2P$ particles, respectively, as follows

%\begin{align}
%\label{eqn:dlmc_dc_error_coarse}
 %   \E{G(X(T))} - \E{G(\Bar{X}^P(T))} & = \frac{C_p}{P} + \text{ h.o.t}, \\
  %  \label{eqn:dlmc_dc_error_fine}
   % \E{G(X(T))} - \E{G(\Bar{X}^{2P}(T))} & = \frac{C_p}{2P} + \text{ h.o.t},
%\end{align}

%and then subtract \eqref{eqn:dlmc_dc_error_fine} from \eqref{eqn:dlmc_dc_error_coarse} to get the following estimate that can be numerically computed 

\begin{equation}
    \label{eqn:dlmc_cp_est}
    \abs{\E{G(\Bar{X}^{2P}(T))} - \E{G(\Bar{X}^P(T))}} \leq \frac{C_p}{2P} \cdot
\end{equation}

To achieve a robust numerical estimate, we coupled the expectation computations in \eqref{eqn:dlmc_cp_est} by using the antithetic sampler (Algorithm~\ref{alg:adlmc_est_bias}). Assumption~\ref{ass:time_err} was verified using the following three estimates for the time-discretization error based on parameters $N_1$ and $N_2$.
\begin{align}
\label{eqn:dlmc_dt1_error}
    \abs{\E{G(\Bar{X}^{P|2N_1|N_2}(T))} - \E{G(\Bar{X}^{P|N_1|N_2}(T))}} &\leq \frac{C_{n_1}}{2 N_1} , \\
    \label{eqn:dlmc_dt2_error}
    \abs{\E{G(\Bar{X}^{P|N_1|2N_2}(T))} - \E{G(\Bar{X}^{P|N_1|N_2}(T))}} &\leq \frac{C_{n_2}}{2 N_2} , \\
    \label{eqn:dlmc_dt_comb_error}
    \abs{\E{G(\Bar{X}^{P|2N|2N}(T))} - \E{G(\Bar{X}^{P|N|N}(T))}} &\leq \frac{C_n}{2N} \cdot
\end{align}

Figure~\ref{fig:dlmc_error} verifies the proposed orders of bias convergence with respect to $P,N_1$, and $N_2$. 
Figure~\ref{fig:dlmc_variance} displays the estimated $V^{P|N_1|N_2}_1$ and $V^{P|N_1|N_2}_2$ using Algorithm~\ref{alg:adlmc_est_constants}, where $V^{P|N_1|N_2}_1$ converges with $\order{P^{-1}}$ and $V^{P|N_1|N_2}_2$ is nearly  constant. Figures \ref{fig:dlmc_error} and \ref{fig:dlmc_variance} validate  the assumptions made in Section~\ref{sec:dlmc}. 

\begin{figure}
     \centering
     \begin{subfigure}[b]{.45\textwidth}
         \centering
         \includegraphics[width=\textwidth]{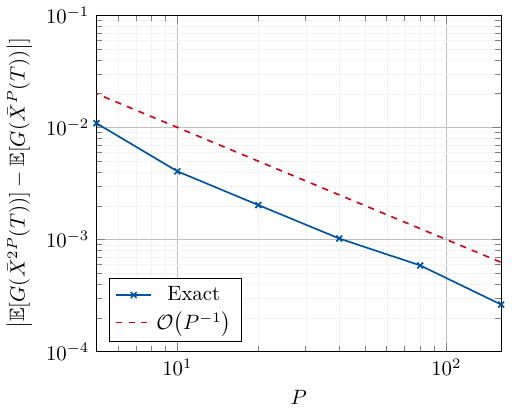}
         \caption{Verifying \eqref{eqn:dlmc_cp_est}. DLMC estimate for $\abs{\E{G(\Bar{X}^{2P}(T))} - \E{G(\Bar{X}^P(T))}}$ by using Algorithm~\ref{alg:dlmc_revised} with inputs $N_1=N_2=128,M_1=100$ and $M_2=10^3$ with respect to number of particles $P$.}
         \label{fig:dlmc_dc_error}
     \end{subfigure}
     \hfill
     \begin{subfigure}[b]{.45\textwidth}
         \centering
         \includegraphics[width=\textwidth]{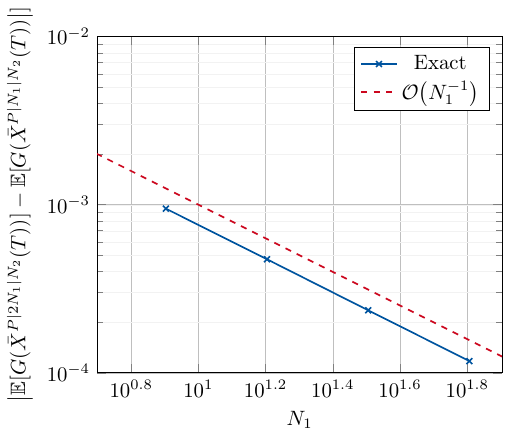}
         \caption{Verifying \eqref{eqn:dlmc_dt1_error}: DLMC estimator for $|\E{G(\Bar{X}^{P|2N_1|N_2}(T))} -$ $ \E{G(\Bar{X}^{P|N_1|N_2}(T))}|$ using Algorithm~\ref{alg:dlmc_revised} with inputs $P=80,N_2=256,M_1=10^2$ and $M_2=10^3$ with respect to the number of time steps, $N_1$}
         \label{fig:dlmc_dt1_error}
     \end{subfigure}
     \hfill
     \begin{subfigure}[b]{.45\textwidth}
         \centering
         \includegraphics[width=\textwidth]{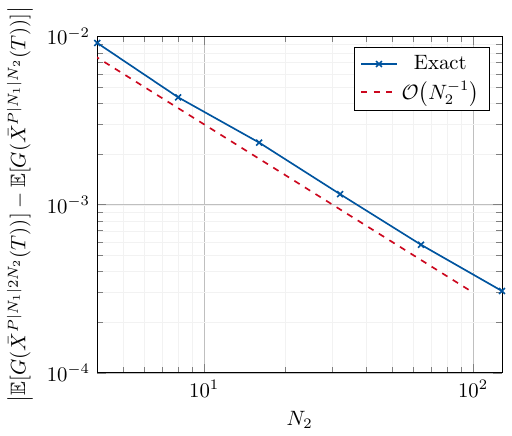}
         \caption{Verifying \eqref{eqn:dlmc_dt2_error}: DLMC estimator for $|\E{G(\Bar{X}^{P|N_1|2N_2}(T))} -$  $\E{G(\Bar{X}^{P|N_1|N_2}(T))}|$ using Algorithm~\ref{alg:dlmc_revised} with inputs $P=80,N_1=256,M_1=10^2$, and $M_2=10^3$ with respect to the number of time steps, $N_2$}
         \label{fig:dlmc_dt2_error}
     \end{subfigure}
     \hfill
     \begin{subfigure}[b]{.45\textwidth}
         \centering
         \includegraphics[width=\textwidth]{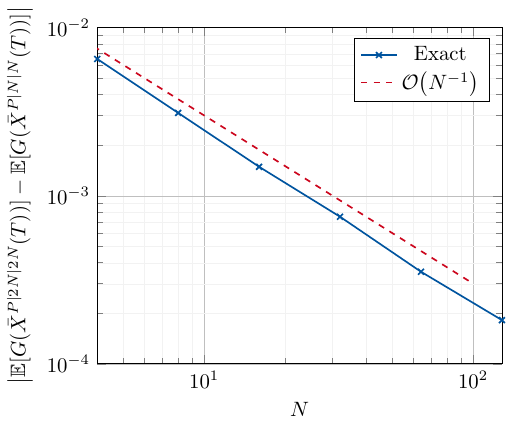}
         \caption{Verifying \eqref{eqn:dlmc_dt_comb_error}: DLMC estimator for $|\E{G(\Bar{X}^{P|2N|2N}(T))} - $ $ \E{G(\Bar{X}^{P|N|N}(T))}|$ using Algorithm~\ref{alg:dlmc_revised} with inputs $P=80,M_1=10^2$, and $M_2=10^3$ with respect to the number of time steps, $N$}
         \label{fig:dlmc_dt_comb_error}
     \end{subfigure}
        \caption{Verifying Assumptions~\ref{ass:decoupling_err} and \ref{ass:time_err} for Kuramoto model \eqref{eqn:kuramoto_model} for $G(x)=\cos{x}$. }
        \label{fig:dlmc_error}
\end{figure}

\begin{figure}
     \centering
     \begin{subfigure}[b]{0.45\textwidth}
         \centering
         \includegraphics[width=\textwidth]{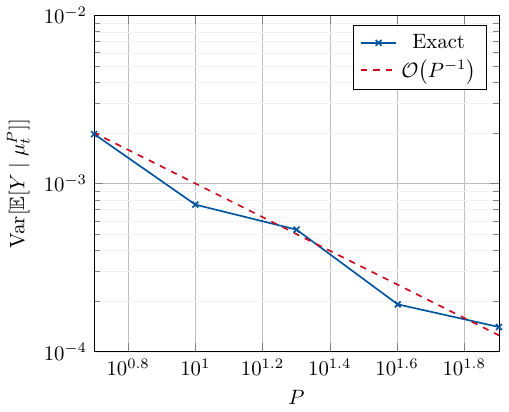}
         \caption{Verifying Assumption~\ref{ass:var_1}: DLMC estimator for $V^{P|N_1|N_2}_1$ by using Algorithm~\ref{alg:adlmc_est_constants} with inputs $N_1=N_2=128,M_1=10^2$, and $M_2=10^4$ with respect to number of particles $P$}
         \label{fig:dlmc_vare}
     \end{subfigure}
     \hfill
     \begin{subfigure}[b]{0.45\textwidth}
         \centering
         \includegraphics[width=\textwidth]{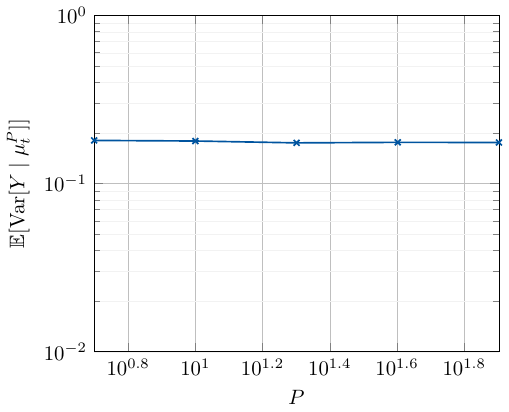}
         \caption{Verifying Assumption~\ref{ass:var_2}: DLMC estimator for $V^{P|N_1|N_2}_2$ using Algorithm~\ref{alg:adlmc_est_constants} with inputs $N_1=N_2=256,M_1=10^2$, and $M_2=10^4$ with respect to number of particles $P$}
         \label{fig:dlmc_evar}
     \end{subfigure}
        \caption{Verifying Assumptions~\ref{ass:var_1} and \ref{ass:var_2} for Kuramoto model \eqref{eqn:kuramoto_model}.}
        \label{fig:dlmc_variance}
\end{figure}

%We now test the direct DLMC Algorithm \ref{alg:direct_dlmc} using robust estimates of the constants $C_p,C_{n_1},C_{n_2},C_1,C_2$ obtained from Figures \ref{fig:dlmc_error},\ref{fig:dlmc_variance}. Figure \ref{fig:dlmc_direct_runtime} shows a direct implication of Theorem \ref{th:dlmc_optimal_work} where we plot the computational runtime of Algorithm \ref{alg:direct_dlmc} for different tolerances. This plot verifies that the DLMC estimator closely follows the predicted theoretical rates derived in Theorem \ref{th:dlmc_optimal_work} for the whole tolerance range. Figure \ref{fig:dlmc_direct_error} shows the exact errors of the direct DLMC estimator for different prescribed tolerances. This plot shows that Algorithm \ref{alg:direct_dlmc} estimate the quantity of interest up to the prescribed tolerance and 95\% confidence level.

%\begin{figure}
 %    \centering
  %   \begin{subfigure}[b]{0.45\textwidth}
   %      \centering
    %     \includegraphics[width=\textwidth]{}
     %    \caption{Runtime versus $\tol$}
      %   \label{fig:dlmc_direct_runtime}
     %\end{subfigure}
     %\hfill
     %\begin{subfigure}[b]{0.45\textwidth}
      %   \centering
       %  \includegraphics[width=\textwidth]{}
        % \caption{Global Error versus $\tol$}
         %\label{fig:dlmc_direct_error}
     %\end{subfigure}
      %  \caption{Direct DLMC Algorithm \ref{alg:direct_dlmc} when applied to Kuramoto example \eqref{eqn:kuramoto_model} for $G(x)=\cos x$}
       % \label{fig:dlmc_direct}
%\end{figure}

We tested the adaptive DLMC algorithm (Algorithm~\ref{alg:adlmc}) on the Kuramoto model \eqref{eqn:kuramoto_model}, utilizing $P_0=5,N_0=4$ as inputs. We used Algorithm~\ref{alg:adlmc_est_constants} with $\Bar{M}_1 = 100$ and $\Bar{M}_2 = 3000$ samples to estimate $V_{1,\ell}$ and $V_{2,\ell}$ for each level $\ell$. In addition, we used Algorithm~\ref{alg:adlmc_est_bias} with $\hat{M}_1 = M_1$ and $\hat{M}_2 = M_2$ to estimate the bias, where $M_1$ and $M_2$ are the optimal number of samples obtained from \eqref{eqn:adlmc_optimal_params}. Figure \ref{fig:adlmc_runtime} shows the  computational runtime for Algorithm~\ref{alg:adlmc} for different error tolerances. The runtimes for sufficiently small tolerances follow the predicted theoretical rate, $\order{\tol^{-4}}$, derived in Theorem~\ref{th:dlmc_optimal_work}. Figure \ref{fig:adlmc_error} shows the exact DLMC estimator error for separate runs of Algorithm~\ref{alg:adlmc} for different prescribed absolute error tolerances ($\tol$), where the exact error was computed using a reference DLMC approximation with $\tol=10^{-3.5}$. Figure \ref{fig:adlmc_error} clearly shows that Algorithm~\ref{alg:adlmc} produces an estimate that satisfies error constraint~\eqref{eqn:mc_objective}. 

\begin{figure}
     \centering
     \begin{subfigure}[b]{0.45\textwidth}
         \centering
         \includegraphics[width=\textwidth]{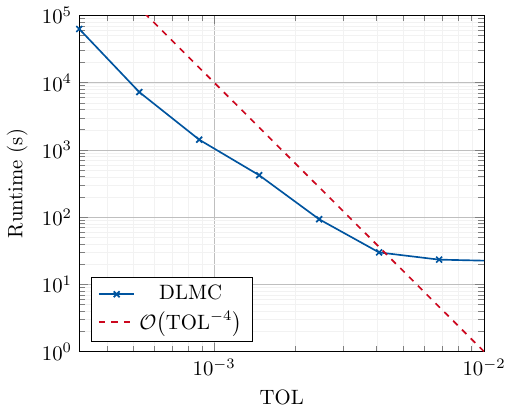}
         \caption{Runtime with respect to $\tol$}
         \label{fig:adlmc_runtime}
     \end{subfigure}
     \hfill
     \begin{subfigure}[b]{0.45\textwidth}
         \centering
         \includegraphics[width=\textwidth]{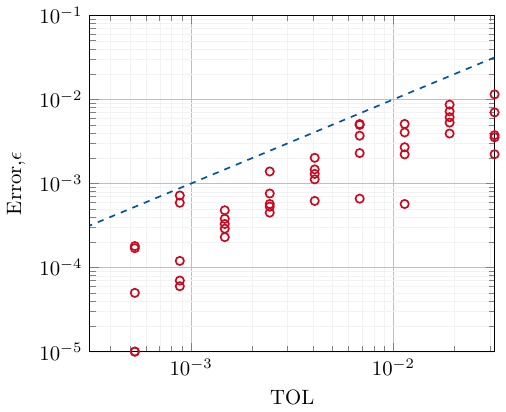}
         \caption{Global error with respect to $\tol$}
         \label{fig:adlmc_error}
     \end{subfigure}
        \caption{Adaptive DLMC Algorithm~\ref{alg:adlmc} applied to Kuramoto example \eqref{eqn:kuramoto_model} for $G(x) = \cos{x}$.}
        \label{fig:adlmc}
\end{figure}

\subsection{Objective function $G(x) = \mathbbm{1}_{\{x>K\}}$}

For a sufficiently large value of $K$, $G(x) = \mathbbm{1}_{\{x>K\}}$ corresponds to the probability of a rare event. Figures~\ref{fig:test1} and \ref{fig:adlmc_rare} use a threshold of $K=2$, corresponding to a probability of approximately $2.53 \times 10^{-4}$. We implemented the IS scheme (Section~\ref{sec:dlmc}) with IS control $\zeta(\cdot,\cdot)$ for the one-dimensional Kuramoto model \eqref{eqn:kuramoto_model} obtained by numerically  solving \eqref{eqn:dmvsde_hjb_form3} using finite differences and linear interpolation throughout the domain. Two numerical experiments were initially conducted, verifying variance reduction from IS. In the first experiment shown in Figure \ref{fig:test1_mcis}, we verified variance reduction on the MC estimator of the inner expectation conditioned on an empirical law, $\mu^{P|N_1}$. To obtain Figure \ref{fig:test1_mcis}, we acquired $\mu^{P|N_1}$ empirically by using the stochastic $P$-particle system with $P=200$ and $N_1=32$. We used this law to obtain both the IS control $\zeta(\cdot,\cdot)$ as well as an input to all realizations of the decoupled MV-SDE \eqref{eqn:decoupled_mvsde}. We simulated the decoupled MV-SDE by using $N_2=32$ time steps. Figure \ref{fig:test1_mcis} presents a comparison of the squared coefficients of variation for the MC estimator of the inner conditional expectation with and without IS with respect to the number of sample paths $M$ for the decoupled MV-SDE. The plots verify that the squared coefficient of variation for the estimator reduces approximately $6000$-fold with IS. In the second experiment, we used $\Bar{P}=200$ particles and $\Bar{N}=100$ time steps in the stochastic particle system to estimate empirical $\mu^{\Bar{P}|\Bar{N}}$, and subsequently obtained the optimal IS control. Then, we set $P=100,N_1=N_2=32,M_1=10^3$, and varied $M_2$ as inputs to Algorithm~\ref{alg:dlmc_revised}. Figure~\ref{fig:test2_mcis} shows the squared coefficient of variation of the DLMC estimator with respect to $M_2$. We observed that the IS estimator displays a remarkably reduced variance (approximately $1000$-fold).

\begin{figure}
     \centering
     \begin{subfigure}[b]{0.45\textwidth}
         \centering
         \includegraphics[width=\textwidth]{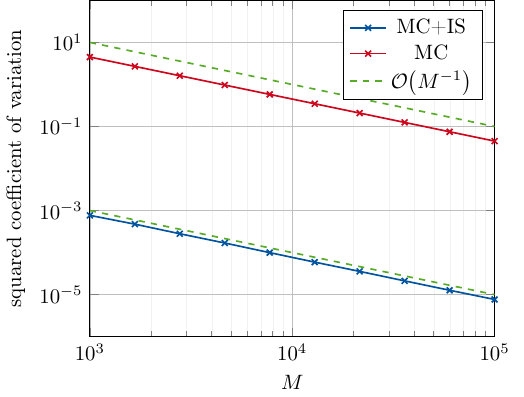}
         \caption{MC estimator squared coefficient of variation for $\prob{\Bar{X}(T)>K}$, conditional on fixed empirical $\mu^{P|N_1}$ with respect to number of sample paths, $M$}
         \label{fig:test1_mcis}
     \end{subfigure}
     \hfill
     \begin{subfigure}[b]{0.45\textwidth}
         \centering
         \includegraphics[width=\textwidth]{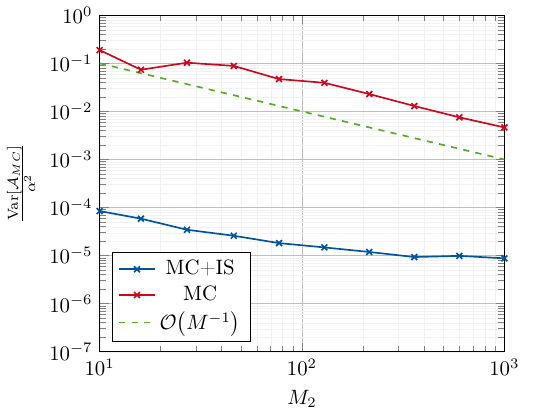}
         \caption{DLMC estimator squared coefficient of variation for $\prob{\Bar{X}(T)>K}$ with respect to the number of sample paths in the inner loop, $M_2$}
         \label{fig:test2_mcis}
     \end{subfigure}
     \caption{Variance reduction of the DLMC estimator using IS on Kuramoto example \eqref{eqn:kuramoto_model} for $G(x) = \mathbbm{1}_{\{x>K\}}$.}
        \label{fig:test1}
\end{figure}

%\begin{figure}
%     \centering
%     \begin{subfigure}[b]{0.9\textwidth}
%         \centering
%         \includegraphics[width=0.45\textwidth]{}
%         \includegraphics[width=0.45\textwidth]{plots/mcis_test1_var.pdf}
%         \caption{\textit{Left}: MC estimator for $\prob{\Bar{X}(T)>K}$, conditional on fixed empirical $\mu^{P|N_1}$ with and without IS with respect to the number of sample paths $M$. \textit{Right}: MC estimator squared coefficient of variation for $\prob{\Bar{X}(T)>K}$, conditional on fixed empirical $\mu^{P|N_1}$ with respect to number of sample paths, $M$}
%         \label{fig:test1_mcis}
%     \end{subfigure}
%     \hfill
%     \begin{subfigure}[b]{0.9\textwidth}
%         \centering
%         \includegraphics[width=0.45\textwidth]{}
%         \includegraphics[width=0.45\textwidth]{plots/mcis_test1_var.pdf}
%         \includegraphics[width=0.45\textwidth]{plots/test2_mcis_var.pdf}
%         \caption{\textit{Left}: MC estimator squared coefficient of variation for $\prob{\Bar{X}(T)>K}$, conditional on fixed empirical $\mu^{P|N_1}$ with respect to number of sample paths, $M$. \textit{Right}: DLMC estimator squared coefficient of variation for $\prob{\Bar{X}(T)>K}$ with respect to the number of sample paths in the inner loop, $M_2$}
%         \label{fig:test2_mcis}
%     \end{subfigure}
%        \caption{Variance reduction of the DLMC estimator using IS on Kuramoto example \eqref{eqn:kuramoto_model} for $G(x) = \mathbbm{1}_{\{x>K\}}$.}
%        \label{fig:test1}
%\end{figure}

We applied Algorithm~\ref{alg:adlmc} to Kuramoto model \eqref{eqn:kuramoto_model} with $\Bar{P}=1000$ particles and $\Bar{N}_1=100$ time steps to estimate empirical $\mu^{\Bar{P}|\Bar{N}}$ by using the stochastic particle system and obtaining control $\zeta(\cdot,\cdot)$. We used $P_0=5,N_0=4$, and a relative error tolerance $\tol_{\mathrm{r}}$ as inputs to Algorithm \ref{alg:adlmc}. The following heuristics were employed to ensure the robustness of the proposed algorithm.
\begin{itemize}
    \item We used $\Tilde{M}_1=10^3$ and $\Tilde{M}_2=10^2$ to obtain an initial rough estimate for the required quantity to aid in quantifying the required tolerance.
    \item Algorithm~\ref{alg:adlmc_est_constants} was employed to estimate $V_{1,\ell}$ and $V_{2,\ell}$ with $\Bar{M_1}=50$ and $\Bar{M}_2 = 10^3$ for the first three levels, i.e., $\ell=1,2,3$. In addition, Assumptions~\ref{ass:var_1} and \ref{ass:var_2} were employed for the  subsequent levels to linearly extrapolate $V_{1,\ell}$ and $V_{2,\ell}$. 
    \item Algorithm~\ref{alg:adlmc_est_bias} was used to estimate bias, with $\hat{M}_1=\max(M_1,100)$ and $\hat{M}_2=\max(M_2,50)$, where $M_1,M_2$ are  the computed optimal sample sizes. The estimated bias was compared with the  extrapolated bias from the last two levels, and the maximum of the three values was selected, ensuring the bias estimate robustness for $\ell>3$.
\end{itemize}

Figure \ref{fig:mcis_qoi} illustrates that $K=2$ corresponds to a probability of approximately $2.53 \times 10^{-4}$. Figure \ref{fig:mcis_global_err} shows the exact relative error of our DLMC estimator for different runs of Algorithm \ref{alg:adlmc} over various prescribed relative error tolerances. We used a reference DLMC approximation computed with $\tol_{\mathrm{r}} = 1.5\%$. Figure \ref{fig:mcis_runtime} shows that the computational runtime closely follows the predicted theoretical rate of $\order{\tol_{\mathrm{r}}^{-4}}$ for small relative tolerances. Additionally, we compared the estimated computational work, given by \eqref{eqn:dlmc_cost_v2}, for the IS and crude DLMC methods. Because running a crude DLMC is infeasible for rare events, we used a heuristic estimate of the computational cost of crude DLMC without actually running the algorithm. Figure \ref{fig:mcis_work} provides numerical evidence that the IS estimator reduced the computational cost to achieve a prescribed relative error tolerance, by multiple orders (three-orders of magnitude in this case). This implies that our IS estimator dramatically reduces the constant associated with estimating rare-event probabilities.

\begin{figure}
     \centering
     \begin{subfigure}[b]{0.45\textwidth}
         \centering
         \includegraphics[width=\textwidth]{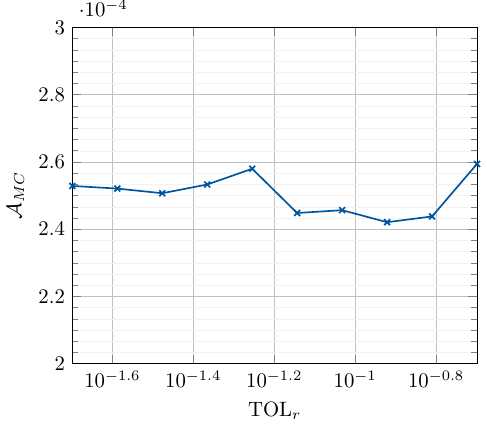}
         \caption{Estimator $\mathcal{A}_{\mathrm{MC}}$ for quantity of interest}
         \label{fig:mcis_qoi}
     \end{subfigure}
     \hfill
     \begin{subfigure}[b]{0.45\textwidth}
         \centering
         \includegraphics[width=\textwidth]{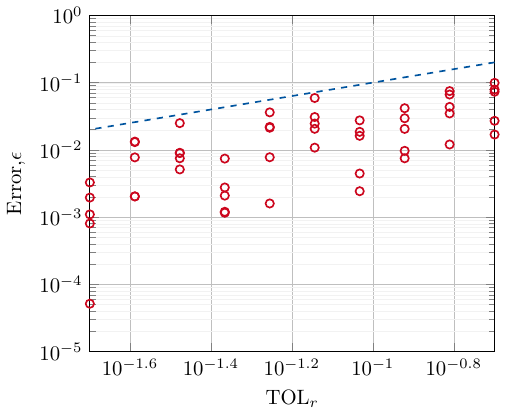}
         \caption{Relative estimator error with respect to relative tolerance, $\tol_{\mathrm{r}}$}
         \label{fig:mcis_global_err}
     \end{subfigure}
     \hfill
     \begin{subfigure}[b]{0.45\textwidth}
         \centering
         \includegraphics[width=\textwidth]{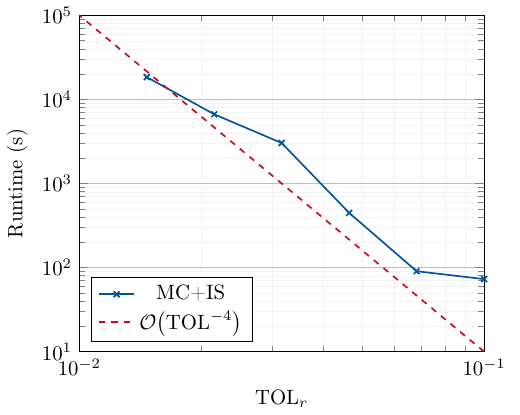}
         \caption{Computational runtime with respect to relative tolerance, $\tol_{\mathrm{r}}$}
         \label{fig:mcis_runtime}
     \end{subfigure}
     \hfill
     \begin{subfigure}[b]{0.45\textwidth}
         \centering
         \includegraphics[width=\textwidth]{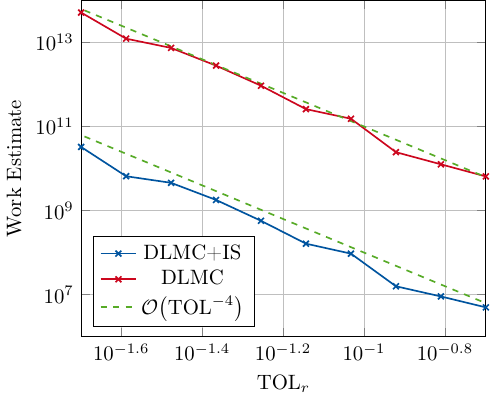}
         \caption{Computational work estimate with respect to relative tolerance, $\tol_{\mathrm{r}}$}
         \label{fig:mcis_work}
     \end{subfigure}
     \caption{Algorithm~\ref{alg:adlmc} applied to Kuramoto example \eqref{eqn:kuramoto_model} for $G(x) = \mathbbm{1}_{\{x>K\}}$.}
        \label{fig:adlmc_rare}
\end{figure}

Table \ref{tab:adlmc_comparison} shows the number of samples required to reach a given relative tolerance with and without IS for different thresholds ($K$). For DLMC without IS, the number of samples required to satisfy a given relative tolerance increases as the probability reduces~\citep{is_general_ref}. Our IS scheme reduced both $M_1$ and $M_2$. In fact, the required number of samples for IS is considerably lesser than that without IS, and it remains of the same order regardless of the event rarity. Thus, the proposed DLMC estimator with IS numerically achieves the bounded relative error property~\citep{is_relative_error}.  

\begin{table}
    \centering
    \begin{tabular}{||c|c|c|c|c|c|c|c||}
    \hline
        \multicolumn{2}{||c|}{} & \multicolumn{2}{|c|}{} & \multicolumn{2}{|c|}{DLMC+IS} & \multicolumn{2}{|c||}{DLMC} \\
        \hline
        K & $\mathcal{A}_{\mathrm{MC}}$ & $\tol_{\text{r}}$ & $\ell$ & $M_1$ & $M_2$ & $M_1$ & $M_2$  \\
        \hline
        \multirow{3}{*}{1} & \multirow{3}{*}{$5.6 \times 10^{-2}$} & 20\% & 3 & 59 & 12 & 157 & 65 \\
        & & 10\% & 4 & 111 & 27 & 284 & 131 \\
        & & 5\% & 5 & 167 & 58 & 456 & 297 \\
        \hline
        \multirow{3}{*}{1.5} & \multirow{3}{*}{$6.8 \times 10^{-3}$} & 20\% & 4 & 41 & 24 & 230 & 325 \\
        & & 10\% & 5 & 64 & 48 & 386 & 700 \\
        & & 5\% & 6 & 105 & 102 & 662 & 1491 \\
        \hline
        \multirow{3}{*}{2} & \multirow{3}{*}{$2.53 \times 10^{-4}$} & 20\% & 5 & 17 & 48 & 477 & 3629 \\
        & & 10\% & 6 & 33 & 90 & 913 & 6929 \\
        & & 5\% & 7 & 57 & 186 & 1711 & 14864 \\
        \hline
    \end{tabular}
    \caption{Naive DLMC and DLMC+IS methods estimating rare event probabilities to satisfy $\tol_{\mathrm{r}}$ for different thresholds ($K$).} 
    \label{tab:adlmc_comparison}
\end{table}

\section{Conclusion}
\label{sec:conclusion}

This work has shown both theoretically and numerically, under certain assumptions which could be verified numerically, the effectiveness of the novel DLMC estimator with IS based on the decoupling approach \citep{is_mvsde} when used to estimate rare-event probabilities associated with a stochastic particle system in the mean-field limit. For this, we used stochastic optimal control theory to derive a zero-variance IS change of measure for the decoupled MV-SDE \eqref{eqn:decoupled_mvsde}. Our numerical experiments demonstrated that the above obtained IS control substantially reduces the variance of the DLMC estimator. In the rare-event regime, where the standard DLMC approach fails, our approach yields accurate estimates of rare-event probabilities with reduced computational effort. Our novel DLMC estimator has a computational cost of $\order{\tol_{\mathrm{r}}^{-4}}$ which is exactly the same complexity as that of the MC estimator proposed by \citep{mlmc_mvsde} for smooth, nonrare observables, while substantially reducing the associated constant for rare-event probabilities. Future works will involve extending the proposed IS scheme to the multidimensional case by using model-reduction techniques or stochastic gradient-based learning methods, leading to a more generalized and efficient algorithm. The presence of multiple discretization parameters in the decoupled MV-SDE hints toward the use of multilevel and multi-index MC methods coupled with IS to further reduce the work complexity of the DLMC estimator. 
%Further work would also include extending our analysis to handle more practical and irregular drift/diffusion coefficients numerically by using adaptive time-stepping schemes to efficiently simulate sample paths of the MV-SDE. Statistical data inference to estimate drift/diffusion coefficients of stochastic particle systems from noisy observations is also something that can be explored. Stochastic optimal control for particle systems can also be explored for more generalized and practical loss functions.

\appendix
\label{sec:appendix}

\section{Proof of Lemma \ref{lemma:dyn_prog}}

\label{appendix:a1}

Assume that the minimizer for $C_{t,x}$ can be attained. Then, we prove the equality in \eqref{eqn:sde_dyn_prog} by proving both inequalities $\geq$ and $\leq$. First, consider $\geq$:

Let $\zeta^* \in \mathcal{Z}$ be the optimal control for $s \in (t,T)$ that minimizes the second moment. From the value function definition \eqref{sde_value_fxn}, we obtain
\begin{align}
    u(t,x) &= \mathbb{E}\Bigg[ \Bigg. G^2(Y_{\zeta^*}(T)) \exp \Bigg\{ \Bigg. -\int_t^T \norm{\zeta^*(s,Y_{\zeta^*}(s))}^2 \dd s \nonumber \\
    &\qquad - 2 \int_t^T \langle{\zeta^*}(s,Y_{\zeta^*}(s)),\dd W(s)\rangle \Bigg. \Bigg\} \quad \Bigg| \quad Y_{\zeta^*}(t)=x \Bigg. \Bigg] \nonumber \\
    &= \mathbb{E}\Bigg[ \Bigg.G^2(Y_{\zeta^*}(T)) \exp{-\int_t^{t+\delta} \norm{\zeta^*(s,Y_{\zeta^*}(s))}^2 \dd s - 2 \int_t^{t+\delta} \langle{\zeta^*}(s,Y_{\zeta^*}(s)),\dd W(s)\rangle} \nonumber \\
    & \qquad\exp{-\int_{t+\delta}^T \norm{\zeta^*(s,Y_{\zeta^*}(s))}^2 \dd s - 2 \int_{t+\delta}^T \langle{\zeta^*}(s,Y_{\zeta^*}(s)),\dd W(s)\rangle} \quad \Bigg| \quad Y_{\zeta^*}(t)=x \Bigg. \Bigg] \nonumber \\
    &= \mathbb{E}\Bigg[ \Bigg. \mathbb{E}\Bigg[ \Bigg. G^2(Y_{\zeta^*}(T)) \exp{-\int_t^{t+\delta} \norm{\zeta^*(s,Y_{\zeta^*}(s))}^2 \dd s - 2 \int_t^{t+\delta} \langle{\zeta^*}(s,Y_{\zeta^*}(s)),\dd W(s)\rangle} \nonumber \\
    \label{eqn:dyn_prog_step3}
    & \qquad \exp{-\int_{t+\delta}^T \norm{\zeta^*(s,Y_{\zeta^*}(s))}^2 \dd s - 2 \int_{t+\delta}^T \langle{\zeta^*}(s,Y_{\zeta^*}(s)),\dd W(s)\rangle} \nonumber \\
    &\qquad \quad \Bigg| \quad Y_{\zeta^*}(t)=x,\mathcal{F}_{t+\delta} \Bigg. \Bigg] \quad \Bigg| \quad Y_{\zeta^*}(t)=x \Bigg. \Bigg], 
\end{align}
where $\mathcal{F}_{t+\delta}$ is the Wiener process filtration until time $t+\delta$. Considering Markovianity for process $Y_{\zeta^*}$,  \eqref{eqn:dyn_prog_step3} can be expressed as

\begin{align}
    u(t,x) &= \mathbb{E}\Bigg[ \Bigg. \mathbb{E}\Bigg[ \Bigg. G^2(Y_{\zeta^*}(T)) \exp \Bigg\{ \Bigg.-\int_{t+\delta}^T \norm{\zeta^*(s,Y_{\zeta^*}(s))}^2 \dd s \nonumber\\
    & \qquad - 2 \int_{t+\delta}^T \langle{\zeta^*}(s,Y_{\zeta^*}(s)),\dd W(s)\rangle \Bigg. \Bigg\} \quad \Bigg| \quad Y_{\zeta^*}(t+\delta) = x_{t+\delta} \Bigg. \Bigg] \nonumber \\ 
    & \qquad \exp{-\int_t^{t+\delta} \norm{\zeta^*(s,Y_{\zeta^*}(s))}^2 \dd s - 2 \int_t^{t+\delta} \langle{\zeta^*}(s,Y_{\zeta^*}(s)),\dd W(s)\rangle} \quad \Bigg| \quad Y_{\zeta^*}(t)=x \Bigg. \Bigg]  \nonumber \\
    \label{eqn:dyn_prog_step5}
    &= \mathbb{E}\Bigg[ \Bigg. \exp{-\int_t^{t+\delta} \norm{\zeta^*(s,Y_{\zeta^*}(s))}^2 \dd s - 2 \int_t^{t+\delta} \langle{\zeta^*}(s,Y_{\zeta^*}(s)),\dd W(s)\rangle} \nonumber \\ 
    &\qquad C_{t+\delta,x_{t+\delta}}(\zeta^*) \quad \Bigg| \quad Y_{\zeta^*}(t)=x \Bigg. \Bigg] \cdot
\end{align}

As $\zeta^*$ may not be an optimal control from time $t+\delta$ to $T$ given $Y_{\zeta^*}(t+\delta)$, based on the definition of the value function \eqref{sde_value_fxn}, we get
\begin{equation}
\label{eqn:dyn_prog_step6}
    C_{t+\delta,x_{t+\delta}}(\zeta^*) \geq u(t+\delta,Y_{\zeta^*}(t+\delta)) \cdot
\end{equation}

By substituting \eqref{eqn:dyn_prog_step6} into \eqref{eqn:dyn_prog_step5}, we get
 
\begin{align}
    u(t,x) &\geq \mathbb{E}\Bigg[ \Bigg. \exp{-\int_t^{t+\delta} \norm{\zeta^*(s,Y_{\zeta^*}(s))}^2 \dd s - 2 \int_t^{t+\delta} \langle{\zeta^*}(s,Y_{\zeta^*}(s)),\dd W(s)\rangle} \nonumber \\
    \label{eqn:dyn_prog_step7}
    & \qquad u(t+\delta,Y_{\zeta^*}(t+\delta)) \quad \Bigg| \quad Y_{\zeta^*}(t)=x \Bigg. \Bigg] ; 
\end{align}

Taking the minimum over all controls $\zeta^*$ in $[t,t+\delta]$,
we obtain 

\begin{align}
    u(t,x) \geq \min_{\zeta^* :[t,t+\delta] \rightarrow \mathbb{R}^d} & \mathbb{E}\Bigg[ \Bigg. \exp{-\int_t^{t+\delta} \norm{\zeta^*(s,Y_{\zeta^*}(s))}^2 \dd s - 2 \int_t^{t+\delta} \langle{\zeta^*}(s,Y_{\zeta^*}(s)),\dd W(s)\rangle} \nonumber \\
    \label{eqn:dyn_prog_geq}
    & \qquad u(t+\delta,Y_{\zeta^*}(t+\delta)) \quad \Bigg| \quad Y_{\zeta^*}(t)=x \Bigg. \Bigg] \cdot
\end{align}

Next, let us consider the second inequality $\leq$. Let $\zeta^+$ be some arbitrary control from $t$ to $t+\delta$. Given $Y_{\zeta^*}(t+\delta)$, let $\zeta^*$ be the optimal control from $t+\delta$ to $T$, and define a new control $\zeta' = (\zeta^+,\zeta^*)$ over $[t,T]$. According to  \eqref{sde_value_fxn}, 
\begin{align}
    u(t,x) & \leq C_{t,x}(\zeta') \nonumber \\
    & \leq \mathbb{E}\Bigg[\Bigg.G^2(Y_{\zeta'}(T)) \exp{-\int_t^T \norm{\zeta'(s,Y_{\zeta'}(s))}^2 \dd s - 2 \int_t^T \langle{\zeta'}(s,Y_{\zeta'}(s)),\dd W(s)\rangle} \nonumber\\
    &\qquad \Bigg| \quad Y_{\zeta'}(t)=x\Bigg. \Bigg] \nonumber \\
    & \leq \mathbb{E}\Bigg[ \Bigg. G^2(Y_{\zeta'}(T)) \exp{-\int_t^{t+\delta} \norm{\zeta^+(s,Y_{\zeta^+}(s))}^2 \dd s - 2 \int_t^{t+\delta} \langle{\zeta^+}(s,Y_{\zeta^+}(s)),\dd W(s)\rangle}  \nonumber\\
    &\qquad \exp{-\int_{t+\delta}^T \norm{\zeta^*(s,Y_{\zeta^*}(s))}^2 \dd s - 2 \int_{t+\delta}^T \langle{\zeta^*}(s,Y_{\zeta^*}(s)),\dd W(s)\rangle} \quad \Bigg| \quad Y_{\zeta'}(t)=x \Bigg. \Bigg] \nonumber \\
    \label{eqn:dyn_prog_step10}
    & \leq \mathbb{E}\Bigg[ \Bigg. \mathbb{E}\Bigg[\Bigg. G^2(Y_{\zeta'}(T)) \exp{-\int_t^{t+\delta} \norm{\zeta^+(s,Y_{\zeta^+}(s))}^2 \dd s - 2 \int_t^{t+\delta} \langle{\zeta^+}(s,Y_{\zeta^+}(s)),\dd W(s)\rangle} \nonumber\\
    &\quad \exp{-\int_{t+\delta}^T \norm{\zeta^*(s,Y_{\zeta^*}(s))}^2 \dd s - 2 \int_{t+\delta}^T \langle{\zeta^*}(s,Y_{\zeta^*}(s)),\dd W(s)\rangle} \nonumber\\
    &\qquad \Bigg| \quad Y_{\zeta^*}(t)=x,\mathcal{F}_{t+\delta}\Bigg. \Bigg] \quad \Bigg| \quad Y_{\zeta'}(t)=x \Bigg. \Bigg] ,
\end{align}

%where $\mathcal{F}_{t+\delta}$ is the filtration of the Wiener process till time $t+\delta$. Due to the Markovianity of the Wiener process and the control $\zeta^*$, 
and we can express \eqref{eqn:dyn_prog_step10} as

\begin{align}
    u(t,x) &\leq \mathbb{E}\Bigg[\Bigg. \mathbb{E}\Bigg[\Bigg.G^2(Y_{\zeta^*}(T)) \exp\Bigg\{\Bigg.-\int_{t+\delta}^T \norm{\zeta^*(s,Y_{\zeta^*}(s))}^2 \dd s \nonumber\\
    \label{eqn:dyn_prog_step11}
    &\qquad - 2 \int_{t+\delta}^T \langle{\zeta^*}(s,Y_{\zeta^*}(s)),\dd W(s)\rangle\Bigg.\Bigg\} \quad \Bigg| \quad Y_{\zeta^*}(t+\delta)\Bigg.\Bigg] \\ 
    &\qquad \exp{-\int_t^{t+\delta} \norm{\zeta^+(s,Y_{\zeta^+}(s))}^2 \dd s - 2 \int_t^{t+\delta} \langle{\zeta^+}(s,Y_{\zeta^+}(s)),\dd W(s)\rangle} \quad \Bigg| \quad Y_{\zeta'}(t)=x \Bigg.\Bigg] \nonumber \cdot
\end{align}

Considering optimality of control $\zeta^*$ in $[t+\delta,T]$,  we can express \eqref{eqn:dyn_prog_step11} as
\begin{align}
    u(t,x) \leq & \mathbb{E}\Bigg[ \Bigg. \exp{-\int_t^{t+\delta} \norm{\zeta^+(s,Y_{\zeta^+}(s))}^2 \dd s - 2 \int_t^{t+\delta} \langle{\zeta^+}(s,Y_{\zeta^+}(s)),\dd W(s)\rangle} \nonumber \\
    & \qquad u(t+\delta,Y_{\zeta^*}(t+\delta)) \quad \Bigg| \quad Y_{\zeta'}(t)=x \Bigg. \Bigg] \nonumber \cdot
\end{align}

By taking the minimum over all controls $\zeta^+ \in \mathcal{Z}$ over $[t,t+\delta]$, we get
\begin{align}
    u(t,x) \leq \min_{\zeta^+ :[t,t+\delta] \rightarrow \mathbb{R}^d} & \mathbb{E}\Bigg[ \Bigg. \exp{-\int_t^{t+\delta} \norm{\zeta^+(s,Y_{\zeta^+}(s))}^2 \dd s - 2 \int_t^{t+\delta} \langle{\zeta^+}(s,Y_{\zeta^+}(s)),\dd W(s)\rangle} \nonumber \\
    \label{eqn:dyn_prog_leq}
    & \qquad u(t+\delta,Y_{\zeta^*}(t+\delta)) \quad \Bigg| \quad Y_{\zeta'}(t)=x \Bigg. \Bigg] \cdot
\end{align}

Equations \eqref{eqn:dyn_prog_geq} and \eqref{eqn:dyn_prog_leq} prove the equality \eqref{eqn:sde_dyn_prog}. This completes the proof. 

\section{Proof of Theorem \ref{th:sde_optimal_control}}
\label{appendix:a2}

From Lemma \ref{lemma:dyn_prog}, the value function defined in \eqref{sde_value_fxn} satisfies the following equation: 
\begin{align}
    u(t,x) &= \min_{\zeta \in \mathcal{Z}} \mathbb{E}\Bigg[ \Bigg. \exp{-\int_t^{t+\delta} \norm{\zeta(s,Y_\zeta(s))}^2 \dd s} \exp{-2 \int_t^{t+\delta} \langle \zeta(s,Y_\zeta(s)),\dd W(s)\rangle} \nonumber \\ 
    \label{eqn:dyn_prog_re}
    & \qquad u(t+\delta,Y_\zeta(t+\delta)) \quad \Bigg| \quad Y_\zeta(t) = x  \Bigg. \Bigg] \cdot
\end{align}

We then use the Taylor series expansion of function $\exp{x}$ for small $\delta$,
\begin{align}
\label{eqn:sde_hjb_step1}
    \exp{-\int_t^{t+\delta} \norm{\zeta(s,Y_\zeta(s))}^2 \dd s} &= 1 -  \int_t^{t+\delta} \norm{\zeta(s,Y_\zeta(s))}^2 \dd s \\
    &\qquad + \sum_{m=2}^\infty \frac{(-1)^m}{m!} \left( \int_t^{t+\delta} \norm{\zeta(s,Y_\zeta(s))}^2 \dd s \right)^m \cdot \nonumber \\
    \label{eqn:sde_hjb_step2}
    \exp{-2 \int_t^{t+\delta} \langle \zeta(s,Y_\zeta(s)),\dd W(s)\rangle} &= 1 - 2 \int_t^{t+\delta} \langle \zeta(s,Y_\zeta(s)),\dd W(s)\rangle  \\
    + 2 \left( \int_t^{t+\delta} \langle \zeta(s,Y_\zeta(s)),\dd W(s)\rangle \right)^2 &+ \sum_{n=3}^\infty \frac{(-1)^n}{n!} \left( 2 \int_t^{t+\delta} \langle \zeta(s,Y_\zeta(s)),\dd W(s)\rangle \right)^n \cdot \nonumber  
\end{align}

Next, we write down It\^o's formula for $u(t+\delta,Y_\zeta(t+\delta))$,
\begin{align}
\label{eqn:sde_hjb_step3}
    u(t+\delta,Y_\zeta(t+\delta)) &= u(t,Y_\zeta(t)) + \int_t^{t+\delta} \left(\partial_t u + \langle b+\sigma \zeta, \nabla u \rangle + \frac{1}{2} (\sigma \sigma^T):\nabla^2 u \right) \dd t \nonumber \\
    &\qquad + \int_t^{t+\delta} \langle \sigma \nabla u, \dd W(s) \rangle \cdot
\end{align}

Substituting  \eqref{eqn:sde_hjb_step1}, \eqref{eqn:sde_hjb_step2}, and \eqref{eqn:sde_hjb_step3} in \eqref{eqn:dyn_prog_re}, we get
\begin{align}
    u(t,x) = \min_{\zeta \in \mathcal{Z}} \Bigg\{ & \Bigg. u(t,x) + u(t,x) \E{\int_t^{t+\delta} \norm{\zeta(s,Y_\zeta(s))}^2 \dd t \quad \Bigg| \quad Y_{\zeta}(t)=x} \nonumber\\ 
    &+ \E{\int_t^{t+\delta} \left(\partial_t u + \langle b+\sigma \zeta, \nabla u \rangle + \frac{1}{2} (\sigma \sigma^T):\nabla^2 u \right) \dd t \quad \Bigg| \quad Y_{\zeta}(t)=x} \nonumber\\
    \label{eqn:dmvsde_hjb_step4}
    &- 2 \E{\int_t^{t+\delta} \langle \sigma \zeta, \nabla u \rangle \dd t \quad \Bigg| \quad Y_{\zeta}(t)=x} + \mathcal{R} \Bigg. \Bigg\} \cdot
\end{align}

Here $\mathcal{R}$ is the residual term and one can see that

\begin{align*}
	\mathcal{R} &= 2u(t,x) \E{ \left( \int_t^{t+\delta} \norm{\zeta(s,Y_\zeta(s))}^2 \dd s \right) \left( \int_t^{t+\delta} \langle \zeta(s,Y_\zeta(s)),\dd W(s)\rangle \right) \quad \Bigg| \quad Y_{\zeta}(t)=x} \\
	&\quad - 2 \E{ \left( \int_t^{t+\delta} \left(\partial_t u + \langle b+\sigma \zeta, \nabla u \rangle + \frac{1}{2} (\sigma \sigma^T):\nabla^2 u \right) \dd t \right) \left( \int_t^{t+\delta} \langle \zeta(s,Y_\zeta(s)),\dd W(s)\rangle \right) \quad \Bigg| \quad Y_{\zeta}(t)=x} \\
	&\quad - \E{ \left( \int_t^{t+\delta} \norm{\zeta(s,Y_\zeta(s))}^2 \dd s \right) \left( \int_t^{t+\delta} \langle \sigma \nabla u, \dd W(s) \rangle \right) \quad \Bigg| \quad Y_{\zeta}(t)=x} \\
	&\quad - \frac{4}{3} u(t,x) \E{ \left(\int_t^{t+\delta} \langle \zeta(s,Y_\zeta(s)),\dd W(s)\rangle \right)^3 \quad \Bigg| \quad Y_{\zeta}(t)=x} + \mathrm{h.o.t} 
\end{align*}

One can easily see that $\mathcal{R} = \order{\delta^{\frac{3}{2}}}$ because

\begin{align*}
	&\E{ \left( \int_t^{t+\delta} \norm{\zeta(s,Y_\zeta(s))}^2 \dd s \right) \left( \int_t^{t+\delta} \langle \zeta(s,Y_\zeta(s)),\dd W(s)\rangle \right) \quad \Bigg| \quad Y_{\zeta}(t)=x} \\
	&\overset{\text{Cauchy-Schwarz}}{\leq} \E{\left( \int_t^{t+\delta} \norm{\zeta(s,Y_\zeta(s))}^2 \dd s \right)^2 \quad \Bigg| \quad Y_{\zeta}(t)=x}^{\frac{1}{2}} \E{\left( \int_t^{t+\delta} \langle \zeta(s,Y_\zeta(s)),\dd W(s)\rangle \right)^2 \quad \Bigg| \quad Y_{\zeta}(t)=x}^{\frac{1}{2}} \\
	&\overset{\text{Itô Isometry}}{=} \E{\left( \int_t^{t+\delta} \norm{\zeta(s,Y_\zeta(s))}^2 \dd s \right)^2 \quad \Bigg| \quad Y_{\zeta}(t)=x}^{\frac{1}{2}} \E{\int_t^{t+\delta} \norm{\zeta(s,Y_\zeta(s))}^2 \dd s \quad \Bigg| \quad Y_{\zeta}(t)=x}^{\frac{1}{2}} = \order{\delta^{\frac{3}{2}}}
\end{align*} 

and

\begin{align*}
	&\E{ \left(\int_t^{t+\delta} \langle \zeta(s,Y_\zeta(s)),\dd W(s)\rangle \right)^3 \quad \Bigg| \quad Y_{\zeta}(t)=x} \\
	&\quad \overset{\text{Burkholder-Davis-Gundy inequality}}{\leq} C_{\mathrm{BDG}} \E{\left( \int_t^{t+\delta} \norm{\zeta(s,Y_\zeta(s))}^2 \dd s \right)^{\frac{3}{2}} \quad \Bigg| \quad Y_{\zeta}(t)=x} = \order{\delta^{\frac{3}{2}}}
\end{align*}

One can use the same trick to bound even higher order terms in $\mathcal{R}$. From the regularity assumptions stated in Theorem \ref{th:sde_optimal_control}, we have, as $\delta \rightarrow 0$,

\begin{align}
    0 &= \min_{\zeta \in \mathcal{Z}} \Bigg\{ \norm{\zeta}^2 u + \partial_t u +  \langle b, \nabla u \rangle + \frac{1}{2} \left(\sigma\sigma^T\right) : \nabla^2 u - \langle \sigma \zeta , \nabla u \rangle \Bigg\} \nonumber \\
    \label{eqn:sde_hjb_step5}
    &= \partial_t u +  \langle b, \nabla u \rangle + \frac{1}{2} \left(\sigma\sigma^T\right) : \nabla^2 u + \min_{\zeta \in \mathcal{Z}} \Bigg\{ \norm{\zeta}^2 u - \langle \sigma \zeta , \nabla u \rangle \Bigg\} \cdot
\end{align}

Neglecting the trivial solution $u(t,x) = 0$, we obtain the minimizer for \eqref{eqn:sde_hjb_step5} as in \eqref{eqn:sde:hjb_minimizer}. By substituting optimal control $\zeta^*$ in \eqref{eqn:sde_hjb_step5}, we get \eqref{eqn:sde_hjb_form1}, which solves for value function $u$.

\section{Proof of Theorem \ref{th:dlmc_optimal_work}}
\label{appendix:a3}

We use the Lagrangian multiplier method to solve \eqref{eqn:dlmc_min_problem}, with corresponding Lagrangian, 
\begin{align}
    \mathcal{L} &= M_1 N_1 P^2 + M_1 M_2 N_2 P + \lambda_1 \left(\frac{C_p}{P} + \frac{C_{n_1}}{N_1} + \frac{C_{n_2}}{N_2} - \theta\tol_\mathrm{r} \abs{\E{G(X(T))}}\right) \nonumber \\
    \label{eqn:dlmc_lagrangian}
    &+ \lambda_2 \left(C_{\alpha}^2\left(\frac{C_1}{P M_1} + \frac{C_2}{M_1 M_2}\right) - (1-\theta)^2 \tol_\mathrm{r}^2 \abs{\E{G(X(T))}}^2 \right) , 
\end{align}
where $\lambda_1$ and $\lambda_2 \in \mathbb{R}$ are Lagrangian multipliers. Hence, we obtain optimality conditions for \eqref{eqn:dlmc_lagrangian} as follows:
\begin{align*}
    &\frac{\partial \mathcal{L}}{\partial M_1} = 0 \Longrightarrow N_1 P^2 + M_2 N_2 P = \lambda_2 C_\alpha^2 \left(\frac{C_1}{P M_1^2}+\frac{C_2}{M_2 M_1^2} \right), \\
    &\frac{\partial \mathcal{L}}{\partial M_2} = 0 \Longrightarrow M_1 N_2 P = \frac{\lambda_2 C_2 C_\alpha^2}{M_1 M_2^2}, \\
    &\frac{\partial \mathcal{L}}{\partial N_1} = 0 \Longrightarrow \frac{\lambda_1 C_{n_1}}{N_1^2} = M_1 P^2 ,\\
    &\frac{\partial \mathcal{L}}{\partial N_2} = 0 \Longrightarrow \frac{\lambda_1 C_{n_2}}{N_2^2} = M_1 M_2 P, \\
    & \frac{\partial \mathcal{L}}{\partial P} = 0 \Longrightarrow 2 M_1 N_1 P + M_1 M_2 N_2 = \frac{\lambda_1 C_p}{P^2} + \frac{\lambda_2 C_\alpha^2 C_1}{M_1 P^2}, \\
    & \frac{\partial \mathcal{L}}{\partial \lambda_1} = 0 \Longrightarrow \frac{C_p}{P} + \frac{C_{n_1}}{N_1} + \frac{C_{n_2}}{N_2} = \theta\tol_\mathrm{r} \abs{\E{G(X(T))}} ,\\
    & \frac{\partial \mathcal{L}}{\partial \lambda_2} = 0 \Longrightarrow C_{\alpha}^2\left(\frac{C_1}{P M_1} + \frac{C_2}{M_1 M_2}\right) = (1-\theta)^2 \tol_\mathrm{r}^2 \abs{\E{G(X(T))}}^2 \cdot
\end{align*}

By solving the above equations for $P,N_1,N_2,M_1,M_2,\lambda_1$, and $\lambda_2$, we get

\begin{align}
    P &= \frac{\left(C_p + \frac{\beta C_{n_1}}{\alpha} + \beta C_{n_2}\right)}{\theta\tol_\mathrm{r} \abs{\E{G(X(T))}}} , \nonumber \\
    N_1 &= \frac{\left(\frac{\alpha C_p}{\beta}+C_{n_1}+\alpha C_{n_2}\right)}{\theta\tol_\mathrm{r} \abs{\E{G(X(T))}}} , \nonumber \\
    \label{eqn:dlmc_optimal_params}
    N_2 &= \frac{\left(\frac{C_p}{\beta}+\frac{C_{n_1}}{\alpha} + C_{n_2}\right)}{\theta\tol_\mathrm{r} \abs{\E{G(X(T))}}}  ,\\
    M_1 &= \frac{\theta}{(1-\theta)^2} \frac{C_\alpha^2\left(C_1 + \frac{C_2}{\gamma}\right)}{\left(C_p + \frac{\beta C_{n_1}}{\alpha} + \beta C_{n_2}\right)\tol_\mathrm{r} \abs{\E{G(X(T))}}} , \nonumber\\
    M_2 &= \frac{\left(\gamma C_p + \frac{\gamma \beta C_{n_1} }{\alpha} + \beta \gamma C_{n_2}\right)}{\theta\tol_\mathrm{r} \abs{\E{G(X(T))}}} \nonumber ,
\end{align}
where constants 
\begin{align*}
    \alpha = \left(\frac{C_2}{C_1}\right)^{\frac{1}{3}} \left(\frac{C_{n_1}}{C_{n_2}}\right)^{\frac{2}{3}} , \quad
    \gamma = \left(\frac{C_2}{C_1}\right)^{\frac{2}{3}} \left(\frac{C_{n_1}}{C_{n_2}}\right)^{\frac{1}{3}} , \quad
    \beta = \frac{\left(\alpha^2 \frac{C_p}{C_{n_1}}\right)}{\left(\alpha + \gamma \right)} \cdot
\end{align*}

By substituting optimal parameters \eqref{eqn:dlmc_optimal_params} into \eqref{eqn:dlmc_cost_v2}, we get
\begin{equation}
    \mathcal{W} = M_1 N_1 P^2 + M_1 M_2 N_2 P = \order{\tol_\mathrm{r}^{-4}} \cdot
\end{equation}

\section{Estimating $V_{1,L}$ and $V_{2,L}$ for Adaptive DLMC}
\label{app:est_constants}

\begin{algorithm}[H]
    \caption{Estimating constants $V_{1,L}$ and $V_{2,L}$ for adaptive DLMC algorithm}
\label{alg:adlmc_est_constants}
    \SetAlgoLined
    %Generate realisation of law $\mu^{\Bar{P}|\Bar{N}}$ with $\Bar{P}$-particle system and $\Bar{N}$ time steps using \eqref{sps_euler_scheme} and \eqref{eqn:dmvsde_discrete_law} with arbitrarily large $\Bar{P},\Bar{N}$; \\
    %Given $\mu^{\Bar{P}|\Bar{N}}$, solve KBE \eqref{eqn:dmvsde_hjb_form3} to obtain control $\zeta(t,x)$; \\
    \textbf{Inputs: } $P_L,N_L,M_1,M_2,\zeta(\cdot,\cdot)$; \\
    \For{$m_1=1,\ldots,M_1$}{
    Generate realization of random variables $\omega_{1:P_L}^{(m_1)}$; \\
    Generate $\mu^{P_L|N_L} \left( \omega_{1:P_L}^{(m_1)} \right)$ realization with $P_L$-particle system and $N_L$ time steps using \eqref{eqn:dmvsde_discrete_law}; \\
    \For{$m_2=1,\ldots,M_2$}{
    Generate realization of random variables $\tilde{\omega}^{(m_2)}$; \\
    Given $\mu^{P_L|N_L} \left( \omega_{1:P_L}^{(m_1)} \right)$ and $\zeta(\cdot,\cdot)$, solve decoupled MV-SDE with $N_L$ time steps using \eqref{eqn:dmvsde_euler_is_form2};\\
    Compute $G\left(\Bar{X}_\zeta^{P_L|N_L|N_L}(T)\right) \left( \omega_{1:P_L}^{(m_1)}, \tilde{\omega}^{(m_2)} \right)$; \\
    Compute $\mathbb{L}^{P_L|N_L|N_L} \left( \omega_{1:P_L}^{(m_1)}, \tilde{\omega}^{(m_2)} \right)$ using \eqref{eqn:dlmc_llhood_factor};
    }
    Approximate $\E{G\left(\Bar{X}_\zeta^{P_L|N_L|N_L}(T)\right) \mathbb{L}^{P_L|N_L|N_L} \mid \mu^{P_L|N_L} \left( \omega_{1:P_L}^{(m_1)} \right) }$ by $\frac{1}{M_2} \sum_{m_2=1}^{M_2} G\left(\Bar{X}_\zeta^{P_L|N_L|N_L}(T)\right) \mathbb{L}^{P_L|N_L|N_L} \left( \omega_{1:P_L}^{(m_1)}, \tilde{\omega}^{(m_2)} \right)$; \\
    Approximate $\Var{G\left(\Bar{X}_\zeta^{P_L|N_L|N_L}(T)\right) \mathbb{L}^{P_L|N_L|N_L} \mid \mu^{P_L|N_L} \left( \omega_{1:P_L}^{(m_1)} \right)}$ by sample variance of $\left\{G\left(\Bar{X}_\zeta^{P_L|N_L|N_L}(T)\right) \mathbb{L}^{P_L|N_L|N_L} \left( \omega_{1:P_L}^{(m_1)}, \tilde{\omega}^{(m_2)} \right)\right\}_{m_2=1}^{M_2}$; \\
    }
    Approximate $V_{1,L}$ by sample variance of  $\left\{\E{G\left(\Bar{X}_\zeta^{P_L|N_L|N_L}(T)\right) \mathbb{L}^{P_L|N_L|N_L} \mid \mu^{P_L|N_L} \left( \omega_{1:P_L}^{(m_1)} \right)} \right\}_{m_1=1}^{M_1}$ ;\\
    Approximate $V_{2,L}$ by $\frac{1}{M_1} \sum_{m_1=1}^{M_1} \Var{G\left(\Bar{X}_\zeta^{P_L|N_L|N_L}(T)\right) \mathbb{L}^{P_L|N_L|N_L} \mid \mu^{P_L|N_L} \left( \omega_{1:P_L}^{(m_1)} \right)}$;
\end{algorithm}

\bibliography{references}

\begin{thebibliography}{47}
\providecommand{\natexlab}[1]{#1}
\providecommand{\url}[1]{\texttt{#1}}
\expandafter\ifx\csname urlstyle\endcsname\relax
  \providecommand{\doi}[1]{doi: #1}\else
  \providecommand{\doi}{doi: \begingroup \urlstyle{rm}\Url}\fi

\bibitem[Acebr{\'o}n et~al.(2005)Acebr{\'o}n, Bonilla, Vicente, Ritort, and
  Spigler]{chemical_app}
Juan~A Acebr{\'o}n, Luis~L Bonilla, Conrad J~P{\'e}rez Vicente, F{\'e}lix
  Ritort, and Renato Spigler.
\newblock The {K}uramoto model: A simple paradigm for synchronization
  phenomena.
\newblock \emph{Reviews of modern physics}, 77\penalty0 (1):\penalty0 137,
  2005.

\bibitem[Ben~Amar et~al.(2023)Ben~Amar, Ben~Rached, Haji-Ali, and
  Tempone]{soc_sumrvs}
Eya Ben~Amar, Nadhir Ben~Rached, Abdul-Lateef Haji-Ali, and Ra{\'u}l Tempone.
\newblock State-dependent importance sampling for estimating expectations of
  functionals of sums of independent random variables.
\newblock \emph{Statistics and Computing}, 33\penalty0 (2):\penalty0 40, 2023.

\bibitem[Ben~Hammouda et~al.(2023)Ben~Hammouda, Ben~Rached, Tempone, and
  Wiechert]{soc_srn}
Chiheb Ben~Hammouda, Nadhir Ben~Rached, Ra{\'u}l Tempone, and Sophia Wiechert.
\newblock Learning-based importance sampling via stochastic optimal control for
  stochastic reaction networks.
\newblock \emph{Statistics and Computing}, 33\penalty0 (3):\penalty0 58, 2023.

\bibitem[Bossy and Talay(1996)]{mvsde_strong_conv_2}
Mireille Bossy and Denis Talay.
\newblock Convergence rate for the approximation of the limit law of weakly
  interacting particles: application to the {B}urgers equation.
\newblock \emph{The Annals of Applied Probability}, 6\penalty0 (3):\penalty0
  818--861, 1996.

\bibitem[Bossy and Talay(1997)]{mvsde_strong_conv_1}
Mireille Bossy and Denis Talay.
\newblock A stochastic particle method for the {M}c{K}ean-{V}lasov and the
  {B}urgers equation.
\newblock \emph{Mathematics of computation}, 66\penalty0 (217):\penalty0
  157--192, 1997.

\bibitem[Buckdahn et~al.(2017)Buckdahn, Li, Peng, and Rainer]{mvsde_pde}
Rainer Buckdahn, Juan Li, Shige Peng, and Catherine Rainer.
\newblock Mean-field stochastic differential equations and associated {PDE}s.
\newblock 2017.

\bibitem[Bush et~al.(2011)Bush, Hambly, Haworth, Jin, and
  Reisinger]{finance_app}
Nick Bush, Ben~M Hambly, Helen Haworth, Lei Jin, and Christoph Reisinger.
\newblock Stochastic evolution equations in portfolio credit modelling.
\newblock \emph{SIAM Journal on Financial Mathematics}, 2\penalty0
  (1):\penalty0 627--664, 2011.

\bibitem[Crisan and McMurray(2018)]{mvsde_smoothpde}
Dan Crisan and Eamon McMurray.
\newblock Smoothing properties of {M}c{K}ean--{V}lasov {SDE}s.
\newblock \emph{Probability Theory and Related Fields}, 171:\penalty0 97--148,
  2018.

\bibitem[Crisan and McMurray(2019)]{mvsde_cubature}
Dan Crisan and Eamon McMurray.
\newblock Cubature on {W}iener space for {M}c{K}ean--{V}lasov {SDE}s with
  smooth scalar interaction.
\newblock \emph{The Annals of Applied Probability}, 29\penalty0 (1):\penalty0
  130--177, 2019.

\bibitem[Crisan and Xiong(2010)]{mvsde_soln_theory}
Dan Crisan and Jie Xiong.
\newblock Approximate {M}c{K}ean--{V}lasov representations for a class of
  {SPDE}s.
\newblock \emph{Stochastics An International Journal of Probability and
  Stochastics Processes}, 82\penalty0 (1):\penalty0 53--68, 2010.

\bibitem[Cumin and Unsworth(2007)]{neuroscience_app}
David Cumin and CP~Unsworth.
\newblock Generalising the {K}uramoto model for the study of neuronal
  synchronisation in the brain.
\newblock \emph{Physica D: Nonlinear Phenomena}, 226\penalty0 (2):\penalty0
  181--196, 2007.

\bibitem[de~Raynal and Frikha(2021)]{kbe_mvsde}
Paul-Eric~Chaudru de~Raynal and Noufel Frikha.
\newblock From the backward {K}olmogorov {PDE} on the {W}asserstein space to
  propagation of chaos for {M}c{K}ean-{V}lasov {SDE}s.
\newblock \emph{Journal de Math{\'e}matiques Pures et Appliqu{\'e}es},
  156:\penalty0 1--124, 2021.

\bibitem[Dobramysl et~al.(2016)Dobramysl, R{\"u}diger, and Erban]{biology_app}
Ulrich Dobramysl, Sten R{\"u}diger, and Radek Erban.
\newblock Particle-based multiscale modeling of calcium puff dynamics.
\newblock \emph{Multiscale Modeling \& Simulation}, 14\penalty0 (3):\penalty0
  997--1016, 2016.

\bibitem[dos Reis et~al.(2022)dos Reis, Engelhardt, and
  Smith]{mvsde_timescheme}
Gon{\c{c}}alo dos Reis, Stefan Engelhardt, and Greig Smith.
\newblock Simulation of {M}c{K}ean--{V}lasov {SDE}s with super-linear growth.
\newblock \emph{IMA Journal of Numerical Analysis}, 42\penalty0 (1):\penalty0
  874--922, 2022.

\bibitem[dos Reis et~al.(2023)dos Reis, Smith, and Tankov]{is_mvsde}
Gon{\c{c}}alo dos Reis, Greig Smith, and Peter Tankov.
\newblock Importance sampling for {M}c{K}ean-{V}lasov {SDE}s.
\newblock \emph{Applied Mathematics and Computation}, 453:\penalty0 128078,
  2023.

\bibitem[Erban and Haskovec(2011)]{animal_app}
Radek Erban and Jan Haskovec.
\newblock From individual to collective behaviour of coupled velocity jump
  processes: a locust example.
\newblock \emph{arXiv preprint arXiv:1104.2584}, 2011.

\bibitem[Haji~Ali(2012)]{pedestrian_app}
Abdul~Lateef Haji~Ali.
\newblock Pedestrian flow in the mean field limit.
\newblock 2012.

\bibitem[Haji-Ali and Tempone(2018)]{mlmc_mvsde}
Abdul-Lateef Haji-Ali and Ra{\'u}l Tempone.
\newblock Multilevel and multi-index {M}onte {C}arlo methods for the
  {M}c{K}ean--{V}lasov equation.
\newblock \emph{Statistics and Computing}, 28:\penalty0 923--935, 2018.

\bibitem[Hammersley et~al.(2021)Hammersley, {\v{S}}i{\v{s}}ka, and
  Szpruch]{mvsde_weak_soln}
William~RP Hammersley, David {\v{S}}i{\v{s}}ka, and {\L}ukasz Szpruch.
\newblock Weak existence and uniqueness for {M}c{K}ean--{V}lasov {SDE}s with
  common noise.
\newblock 2021.

\bibitem[Hartmann et~al.(2015)Hartmann, Sch{\"u}tte, and Zhang]{is_projection}
Carsten Hartmann, Christof Sch{\"u}tte, and Wei Zhang.
\newblock Projection-based algorithms for optimal control and importance
  sampling of diffusions.
\newblock 2015.

\bibitem[Hartmann et~al.(2016)Hartmann, Sch{\"u}tte, and
  Zhang]{is_model_reduction}
Carsten Hartmann, Christof Sch{\"u}tte, and Wei Zhang.
\newblock Model reduction algorithms for optimal control and importance
  sampling of diffusions.
\newblock \emph{Nonlinearity}, 29\penalty0 (8):\penalty0 2298, 2016.

\bibitem[Hartmann et~al.(2017)Hartmann, Richter, Sch{\"u}tte, and
  Zhang]{is_entropy}
Carsten Hartmann, Lorenz Richter, Christof Sch{\"u}tte, and Wei Zhang.
\newblock Variational characterization of free energy: theory and algorithms.
\newblock \emph{Entropy}, 19\penalty0 (11):\penalty0 626, 2017.

\bibitem[Hartmann et~al.(2018)Hartmann, Sch{\"u}tte, Weber, and
  Zhang]{is_diffusions}
Carsten Hartmann, Christof Sch{\"u}tte, Marcus Weber, and Wei Zhang.
\newblock Importance sampling in path space for diffusion processes with
  slow-fast variables.
\newblock \emph{Probability Theory and Related Fields}, 170:\penalty0 177--228,
  2018.

\bibitem[Hartmann et~al.(2019)Hartmann, Kebiri, Neureither, and
  Richter]{is_regression}
Carsten Hartmann, Omar Kebiri, Lara Neureither, and Lorenz Richter.
\newblock Variational approach to rare event simulation using least-squares
  regression.
\newblock \emph{Chaos: An Interdisciplinary Journal of Nonlinear Science},
  29\penalty0 (6), 2019.

\bibitem[Hinds and Tretyakov(2023)]{hinds2023neural}
P.~D. Hinds and M.~V. Tretyakov.
\newblock Neural variance reduction for stochastic differential equations,
  2023.

\bibitem[Kirk(2004)]{hjb_theory}
Donald~E Kirk.
\newblock \emph{Optimal control theory: an introduction}.
\newblock Courier Corporation, 2004.

\bibitem[Kloeden and Platen(1992)]{sde_numerics}
Peter~E. Kloeden and Eckhard Platen.
\newblock \emph{Numerical Solution of Stochastic Differential Equations}.
\newblock Springer, Berlin, 1992.
\newblock \doi{https://doi.org/10.1007/978-3-662-12616-5}.

\bibitem[Kolokoltsov and Troeva(2019)]{kolokoltsov2019mean}
Vassili~N Kolokoltsov and Marianna Troeva.
\newblock On mean field games with common noise and mckean-vlasov spdes.
\newblock \emph{Stochastic Analysis and Applications}, 37\penalty0
  (4):\penalty0 522--549, 2019.

\bibitem[Kroese et~al.(2013)Kroese, Taimre, and Botev]{is_general_ref}
Dirk~P Kroese, Thomas Taimre, and Zdravko~I Botev.
\newblock \emph{Handbook of {M}onte {C}arlo methods}.
\newblock John Wiley \& Sons, 2013.

\bibitem[Lemaire and Pag{\`e}s(2017)]{mlmc_richardson_extra}
Vincent Lemaire and Gilles Pag{\`e}s.
\newblock Multilevel richardson--romberg extrapolation.
\newblock 2017.

\bibitem[Li et~al.(2023)Li, Mao, Song, Wu, and Yin]{euler_mvsde}
Yun Li, Xuerong Mao, Qingshuo Song, Fuke Wu, and George Yin.
\newblock Strong convergence of {E}uler--{M}aruyama schemes for
  {M}c{K}ean--{V}lasov stochastic differential equations under local
  {L}ipschitz conditions of state variables.
\newblock \emph{IMA Journal of Numerical Analysis}, 43\penalty0 (2):\penalty0
  1001--1035, 2023.

\bibitem[McKean~Jr(1966)]{mckean_vlasov}
Henry~P McKean~Jr.
\newblock A class of {M}arkov processes associated with nonlinear parabolic
  equations.
\newblock \emph{Proceedings of the National Academy of Sciences}, 56\penalty0
  (6):\penalty0 1907--1911, 1966.

\bibitem[M\'el\'eard(1996)]{mvsde_strong_conv_3}
S.~M\'el\'eard.
\newblock Asymptotic behaviour of some interacting particle systems;
  {M}c{K}ean-{V}lasov and {B}oltzmann models.
\newblock In D.~Talay and L.~Tubaro, editors, \emph{Probabilistic Models for
  Nonlinear Partial Differential Equations}, volume 1627, pages 42--95.
  Springer, 1996.

\bibitem[Melnikov(2023)]{Melnikov2023}
Alexander Melnikov.
\newblock \emph{Discrete time stochastic analysis: further results and
  applications}, pages 65--80.
\newblock Springer Nature Switzerland, Cham, 2023.
\newblock ISBN 978-3-031-25326-3.
\newblock \doi{10.1007/978-3-031-25326-3_7}.
\newblock URL \url{https://doi.org/10.1007/978-3-031-25326-3_7}.

\bibitem[Mishura and Veretennikov(2020)]{mvsde_existence}
Yuliya Mishura and Alexander Veretennikov.
\newblock Existence and uniqueness theorems for solutions of
  {M}c{K}ean--{V}lasov stochastic equations.
\newblock \emph{Theory of Probability and Mathematical Statistics},
  103:\penalty0 59--101, 2020.

\bibitem[Newton(1994)]{Newton1994}
N.J. Newton.
\newblock Variance reduction for simulated diffusions.
\newblock \emph{SIAM Journal on Applied Mathematics}, 54\penalty0 (6):\penalty0
  1780--1805, 1994.
\newblock ISSN 00361399.
\newblock URL \url{http://www.jstor.org/stable/2102567}.

\bibitem[Nusken and Richter(2021)]{hjb_derivation}
Nikolas Nusken and Lorenz Richter.
\newblock Solving high-dimensional {H}amilton--{J}acobi--{B}ellman {PDE}s using
  neural networks: perspectives from the theory of controlled diffusions and
  measures on path space.
\newblock \emph{Partial differential equations and applications}, 2:\penalty0
  1--48, 2021.

\bibitem[Ogawa(1992)]{ogawa1992monte}
Shigeyoshi Ogawa.
\newblock Monte carlo simulation of nonlinear diffusion processes.
\newblock \emph{Japan journal of industrial and applied mathematics},
  9:\penalty0 25--33, 1992.

\bibitem[Oksendal(2013)]{sde_oksendal}
Bernt Oksendal.
\newblock \emph{Stochastic differential equations: an introduction with
  applications}.
\newblock Springer Science \& Business Media, 2013.

\bibitem[Pham(2009)]{Pham2009}
H.~Pham.
\newblock \emph{Continuous-Time Stochastic Control and Optimization with
  Financial Applications}.
\newblock Springer-Verlag Heidelberg Berlin, 2009.

\bibitem[Rached et~al.(2015)Rached, Kammoun, Alouini, and
  Tempone]{is_relative_error}
Nadhir~Ben Rached, Abla Kammoun, Mohamed-Slim Alouini, and Raul Tempone.
\newblock Unified importance sampling schemes for efficient simulation of
  outage capacity over generalized fading channels.
\newblock \emph{IEEE Journal of Selected Topics in Signal Processing},
  10\penalty0 (2):\penalty0 376--388, 2015.

\bibitem[Sivashinsky(1977)]{combustion_app}
GI~Sivashinsky.
\newblock Diffusional-thermal theory of cellular flames.
\newblock \emph{Combustion Science and Technology}, 15\penalty0 (3-4):\penalty0
  137--145, 1977.

\bibitem[Soner(1997)]{Soner1997}
Halil~Mete Soner.
\newblock \emph{Controlled {M}arkov processes, viscosity solutions and
  applications to mathematical finance}, pages 134--185.
\newblock Springer Berlin Heidelberg, Berlin, Heidelberg, 1997.

\bibitem[Sznitman(1991)]{mean_field_limit}
Alain-Sol Sznitman.
\newblock Topics in propagation of chaos.
\newblock \emph{Lecture notes in mathematics}, pages 165--251, 1991.

\bibitem[Szpruch et~al.(2019)Szpruch, Tan, and Tse]{mvsde_iterative}
Lukasz Szpruch, Shuren Tan, and Alvin Tse.
\newblock Iterative multilevel particle approximation for {M}c{K}ean--{V}lasov
  {SDE}s.
\newblock \emph{The Annals of Applied Probability}, 29\penalty0 (4):\penalty0
  2230--2265, 2019.

\bibitem[Yong and Zhou(1999)]{random_sde_theory}
Jiongmin Yong and Xun~Yu Zhou.
\newblock \emph{Stochastic controls: {H}amiltonian systems and {HJB}
  equations}, volume~43.
\newblock Springer Science \& Business Media, 1999.

\bibitem[Zhang et~al.(2014)Zhang, Wang, Hartmann, Weber, and
  Sch{\"u}tte]{is_cross_entropy}
Wei Zhang, Han Wang, Carsten Hartmann, Marcus Weber, and Christof Sch{\"u}tte.
\newblock Applications of the cross-entropy method to importance sampling and
  optimal control of diffusions.
\newblock \emph{SIAM Journal on Scientific Computing}, 36\penalty0
  (6):\penalty0 A2654--A2672, 2014.

\end{thebibliography}
\bibliographystyle{plainnat}
\setcitestyle{authoryear,open={(},close={)}}
\end{document}